 \definecolor{darkblue}{RGB}{0,0,160}
\DeclareSymbolFont{usualmathcal}{OMS}{cmsy}{m}{n}
\DeclareSymbolFontAlphabet{\mathcal}{usualmathcal}
\numberwithin{equation}{section}
\theoremstyle{plain}
\newtheorem{theorem}{Theorem}[section]
\newtheorem{lemma}[theorem]{Lemma}
\newtheorem{corollary}[theorem]{Corollary}
\newtheorem{proposition}[theorem]{Proposition}
\newtheorem{conjecture}[theorem]{Conjecture}
\theoremstyle{definition}
\newtheorem{problem}[theorem]{Problem}
\theoremstyle{remark}
\newtheorem{remark}[theorem]{Remark}
\newtheorem{case[theorem]}{Case}
\title[\parbox{14cm}{\centering{Cone restriction conjecture and Applications\hspace{1in}}} \quad]{On the finite field cone restriction conjecture in four dimensions and applications in incidence geometry}
\author{Doowon Koh, Sujin Lee, and Thang Pham}
\address{Department of Mathematics\\
Chungbuk National University \\
Cheongju, Chungbuk 28644 Korea}
\email{koh131@chungbuk.ac.kr}
\address{Department of Mathematics\\
Chungbuk National University \\
Cheongju, Chungbuk 28644 Korea}
\email{sujin4432@chungbuk.ac.kr}
\address{Department of Mathematics, ETH Zurich, Switzerland}
\email{phamanhthang.vnu@gmail.com}
\subjclass[2010]{ 52C10, 42B05, 11T23 }
\begin{document}
\begin{abstract} 
The first purpose of this paper is to solve completely the finite field cone restriction conjecture in four dimensions  with $-1$ non-square.  The second is to introduce a new approach to study incidence problems via restriction theory. More precisely, using the cone restriction estimates, we will prove sharp point-sphere incidence bounds associated with  complex-valued functions for sphere sets of small size.  Our incidence bounds with a specific function improve significantly a result given by Cilleruelo, Iosevich, Lund, Roche-Newton, and Rudnev.
 \end{abstract}

\maketitle
\section{Introduction} 
Let $\mathbb F^n$ be an $n$-dimensional vector space over a finite field $\mathbb F.$ We assume that the characteristic of $\mathbb F$ is greater than two. We endow the vector space $\mathbb F^n$ with  counting measure $dx$, and its dual space $\mathbb F_*^n$ with  normalized counting measure $d\xi.$ Throughout this paper, we will denote by $e: \mathbb F \to \mathbb S^1$ the canonical additive character of $\mathbb F.$ For example, if $\mathbb F$ is a prime field of order $p,$ then we have
$e(t)=e^{2\pi i t/p}$. If $|\mathbb F|=p^\ell$ for some odd prime $p$, then we take
$e(t)=e^{2\pi i Tr(t)/p}$ for $t\in \mathbb F,$
where $|\mathbb F|$ denotes the cardinality of  $\mathbb F$ and $Tr$ is the trace function from $ \mathbb F$ to its subfield.

Given a complex-valued function $g$ on $\mathbb F^n$,  its Fourier transform $\widehat{g}$ is defined on the dual space $\mathbb F_*^n$ of the space $\mathbb F^n$ as follows:
\begin{equation}\label{defhat} \widehat{g}(\xi):= \sum_{x\in \mathbb F^n}  e(-\xi\cdot x) g(x).\end{equation} 
On the other hand,  given a complex-valued function $f: \mathbb F_*^n \to \mathbb C,$ the inverse Fourier transform of $f$, denoted by $f^\vee,$ is defined by
\begin{equation}\label{defvee} f^\vee(x):= \frac{1}{|\mathbb F^n|} \sum_{\xi\in \mathbb F_*^n} e(\xi\cdot x) f(\xi).\end{equation}
It is easy to check that $(\widehat{g})^\vee = g$ and $\widehat{(f^\vee)}=f$ which give  us the following Fourier inversion formulas:
$$ f(\xi)=\sum_{x\in \mathbb F^n} e(-\xi\cdot x) f^\vee(x)\quad \mbox{and}\quad g(x)=\frac{1}{|\mathbb F|^n} \sum_{\xi\in \mathbb F_*^n} e(\xi\cdot x) \widehat{g}(\xi).$$

By a direct computation, one can prove  the following equations which are called Plancherel's theorem.
$$\|\widehat{g}\|_{L^2(\mathbb F_*^n, d\xi)}=\|g\|_{L^2(\mathbb F^n, dx)}\quad\mbox{and}\quad \|f^\vee\|_{L^2(\mathbb F^n, dx)}=\|f\|_{L^2(\mathbb F_*^n, d\xi)}.$$
We now restrict our attention to the restriction problem for the cone $C_n$ in $\mathbb F_*^n.$ Here, and throughout this paper, the cone $C_n$ in $\mathbb F_*^n, ~n\ge 3,$ is defined by  
$$ C_n:=\{\xi\in \mathbb F_*^n:\xi_{n-1}\xi_n= \xi_1^2+\cdots+\xi_{n-2}^2\}.$$ 

We endow the cone $C_n$ with  normalized surface measure $d\sigma$. The normalized surface measure $d\sigma$ on $C_n$ assigns a mass of $|C_n|^{-1}$ to each point of the cone $C_n$ so that 
$$ \int_{\xi\in C_n} f(\xi) d\sigma(\xi)= \frac{1}{|C_n|} \sum_{\xi\in C_n} f(\xi),$$
and the inverse Fourier transform of a measure $fd\sigma$ is defined by
$$ (fd\sigma)^\vee(x)=\frac{1}{|C_n|} \sum_{\xi\in C_n} f(\xi) e(x\cdot \xi).$$

For $1\le p,r\le \infty$,  we denote by $R_{C_n}^*(p\to r)$ the best constant such that 
$$ \|(fd\sigma)^\vee\|_{L^r(\mathbb F^n, dx)} \le R_{C_n}^*(p\to r) \|f\|_{L^p(C_n, d\sigma)},$$
where the constant $R_{C_n}^*(p\to r)$ is independent of the functions $f$ on $C_n$, but it may depend on the size of the underlying finite field $\mathbb F.$
By duality, the above inequality is the same as the following restriction estimate:
\begin{equation}\label{dualEx} \|\widehat{g}\|_{L^{p'}(C_n, d\sigma)} \le R_{C_n}^*(p\to r) \|g\|_{L^{r'}(\mathbb F^n, dx)},\end{equation}
where $p', r'$ denote the H\"{o}lder conjugates of $p, r$, respectively (i.e.  $p'=p/(p-1), ~r'=r/(r-1)$). We will write $R_{C_n}^*(p\to r) \lesssim 1$ if $R_{C_n}^*(p\to r)$ is independent of the field size.\footnote{We use $X\gtrsim Y$ if $ CX\ge Y$ for some constant $C$ independent of $|\mathbb F|$. The notation $X\lesssim Y$ means that $Y\gtrsim X.$ In addition, we use $X\sim Y$ if $X\gtrsim Y$ and $X\lesssim Y.$ }
\begin{problem} [Restriction Problem]  Determine all pairs $(p,r)$ such that  $1\le p, r\le \infty$ and $R_{C_n}^*(p\to r)\lesssim 1.$
\end{problem}

Mockenhaupt and Tao \cite{MT04} were the first to study and solve  this problem in three dimensions. More precisely,  they proved that $R_{C_3}^*(p\to r)\lesssim 1$ if and only if $(1/p, 1/r)$ lies on the convex hull of the points $(0,0), ~(1,0), ~(1/2, 1/4), ~(0, 1/4).$ 

However, in higher dimensions, the problem is still wide open and a concrete statement of necessary conditions is not known. In the following lemma, we provide such conditions for the boundedness of $R^*_{C_n}(p\to r)$ (see Figure \ref{figure}). Its proof will be given in Section \ref{Sec2}.
\begin{lemma}\label{neceCone} Suppose that $R_{C_n}^*(p\to r)\lesssim 1.$ Then the following statements hold:
\begin{enumerate}
\item If $n\ge 3$ is odd, then $(1/p, 1/r)$ lies in the convex hull of the points $(0,0), (1,0),$\\ 
$\left( \frac{n^2-3n+4}{2n^2-4n+2},~ \frac{n-2}{2n-2}\right),$ and $\left(0,~ \frac{n-2}{2n-2}\right).$
 
\item If $n\equiv 2 \mod 4,$ or $n$ is even  and $-1\in  \mathbb F$ is a square number, then
$(1/p, 1/r)$ lies in the convex hull of the points $(0,0), (1,0), \left( \frac{n-2}{2n-2},~ \frac{n-2}{2n-2}\right)$ and $\left(0,~ \frac{n-2}{2n-2}\right).$

\item If $n\equiv 0 \mod 4$ and $-1\in  \mathbb F$ is not a square number, then $(1/p, 1/r)$ lies in the convex hull of the points $(0,0), (1,0), 
\left( \frac{n^2-2n+4}{2n^2-2n},~ \frac{n-2}{2n-2}\right),$ and
$\left(0,~\frac{n-2}{2n-2}\right).$
\end{enumerate}
\end{lemma}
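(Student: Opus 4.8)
My plan is to exhibit, for each conjectured facet of the region, an explicit input to the extension operator that (nearly) saturates the restriction inequality, and then to observe that the claimed set is exactly the intersection of the resulting half-planes with the automatic constraints $1/p\ge 0$ and $1/r\ge 0$. Throughout, write $q=|\mathbb F|$, recall $|C_n|\sim q^{n-1}$, let $\eta$ be the quadratic multiplicative character of $\mathbb F$ (with $\eta(0)=0$), and let $\mathfrak g=\sum_{s\in\mathbb F}\eta(s)e(s)$ be the associated Gauss sum, so $|\mathfrak g|=q^{1/2}$.

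\emph{The horizontal facet $1/r\le (n-2)/(2n-2)$.} I would get this by testing against (essentially) the constant function and using the exact Fourier behaviour of $d\sigma$. Inserting $\mathbf{1}_{C_n}(\xi)=q^{-1}\sum_{t\in\mathbb F}e\bigl(t(\xi_{n-1}\xi_n-\xi_1^2-\cdots-\xi_{n-2}^2)\bigr)$ and evaluating the Gauss sums in $\xi_1,\dots,\xi_{n-2}$ together with the linear sum in $(\xi_{n-1},\xi_n)$, the $t\ne 0$ contribution to $(d\sigma)^\vee(x)$ becomes $|C_n|^{-1}\,\eta(-1)^{n-2}\,\mathfrak g^{\,n-2}\sum_{t\ne 0}\eta(t)^{n-2}e(a/t)$, with $a=\tfrac14(x_1^2+\cdots+x_{n-2}^2)-x_{n-1}x_n$. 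When $n$ is even the character $\eta(t)^{n-2}$ is trivial, the inner sum equals $q\,\mathbf{1}_{\{a=0\}}-1$, and hence $|(d\sigma)^\vee(x)|\sim q^{1-n/2}$ on the $\sim q^{n-1}$ points with $x\ne 0$ and $a=0$. Since $\|1\|_{L^p(C_n,d\sigma)}=1$ for every $p$, the hypothesis $R_{C_n}^*(p\to r)\lesssim 1$ forces $\|(d\sigma)^\vee\|_{L^r}\lesssim 1$, and keeping only this "bump" gives $q^{n-1}q^{(1-n/2)r}\lesssim 1$, i.e.\ $1/r\le (n-2)/(2n-2)$. When $n$ is odd the resonance is absent for the constant function, so instead I would test $f(\xi)=\eta(\xi_n)$: since $\{\xi\in C_n:\xi_n=0\}$ is only a $q^{-1}$-fraction of $C_n$, one still has $\|f\|_{L^p(C_n,d\sigma)}\sim 1$ for every $p$, while slicing the cone by the value of $\xi_n$ produces a factor $\eta(\xi_n)$ from the Gauss sums in $\xi_1,\dots,\xi_{n-2}$ (because $n-2$ is now odd), which against the weight $\eta(\xi_n)$ gives $\eta(\xi_n)^2=1$ and reinstates the resonance $\sum_{\xi_n\ne 0}e(\xi_n b)=q\,\mathbf{1}_{\{b=0\}}-1$, hence a bump of size $\sim q^{1-n/2}$ on $\sim q^{n-1}$ points; the same arithmetic yields $1/r\le (n-2)/(2n-2)$ again.

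\emph{The oblique facet.} The remaining constraint is a Knapp-type example. If $W\subseteq C_n$ is a linear subspace of dimension $w$ through the origin, then $(\mathbf{1}_W\,d\sigma)^\vee=\tfrac{|W|}{|C_n|}\,\mathbf{1}_{W^\perp}$, and computing the two norms in the restriction inequality gives
\[
R_{C_n}^*(p\to r)\ \gtrsim\ q^{\,\frac{n-w}{r}+w-n+1-\frac{w-n+1}{p}},
\]
so $R_{C_n}^*(p\to r)\lesssim 1$ forces $\tfrac1p+\tfrac{n-w}{n-1-w}\cdot\tfrac1r\le 1$; this is strongest when $w$ is as large as possible. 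Now a subspace is contained in $C_n$ precisely when it is totally isotropic for the nondegenerate quadratic form $Q(\xi)=\xi_1^2+\cdots+\xi_{n-2}^2-\xi_{n-1}\xi_n$, whose Witt index is one more than that of $\xi_1^2+\cdots+\xi_{n-2}^2$ (the extra hyperbolic plane $-\xi_{n-1}\xi_n$). Classifying the sum of $n-2$ squares over $\mathbb F$: its Witt index is $(n-3)/2$ when $n$ is odd; it is $(n-2)/2$ when $n\equiv 2\bmod 4$, and more generally whenever $n$ is even and $-1$ is a square (the form is then hyperbolic); and it is $(n-4)/2$ when $n\equiv 0\bmod 4$ and $-1$ is a non-square (the form then has minus type). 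Hence the optimal $w$ is $(n-1)/2$, $n/2$, $(n-2)/2$ in the three respective cases, an explicit maximizer being $W=W'\oplus(\text{the }\xi_{n-1}\text{-axis})$ intersected with $\{\xi_n=0\}$, where $W'$ is a maximal totally isotropic subspace of $\xi_1^2+\cdots+\xi_{n-2}^2$ in the first $n-2$ coordinates.

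\emph{Conclusion and main difficulty.} In each case the set cut out by $1/p\ge 0$, $1/r\ge 0$, $1/r\le (n-2)/(2n-2)$ and $\tfrac1p+\tfrac{n-w}{n-1-w}\cdot\tfrac1r\le 1$ is the quadrilateral with vertices $(0,0)$, $(1,0)$, $\bigl(0,\tfrac{n-2}{2n-2}\bigr)$ and the intersection point of the last two lines; substituting $w=(n-1)/2,\ n/2,\ (n-2)/2$ identifies that fourth vertex with $\bigl(\tfrac{n^2-3n+4}{2n^2-4n+2},\tfrac{n-2}{2n-2}\bigr)$, $\bigl(\tfrac{n-2}{2n-2},\tfrac{n-2}{2n-2}\bigr)$ and $\bigl(\tfrac{n^2-2n+4}{2n^2-2n},\tfrac{n-2}{2n-2}\bigr)$ respectively, which is the asserted list. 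I expect the bulk of the work to be the Gauss-sum bookkeeping for $(d\sigma)^\vee$ — in particular tracking the parity of $n-2$ and the quadratic character, which is exactly what forces the switch from the constant function to $f=\eta(\xi_n)$ in odd dimensions — together with correctly pinning down the Witt index of $\xi_1^2+\cdots+\xi_{n-2}^2$, which is the genuine origin of the three-case split and relies on the classification of quadratic forms over finite fields (the type of a sum of squares depends both on $n\bmod 4$ and on whether $-1$ is a square).
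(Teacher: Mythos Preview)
Your argument is correct, and for the oblique (Knapp) facet it coincides with the paper's: both feed a maximal totally isotropic subspace $W\subset C_n$ into the extension inequality, the only difference being that you phrase the size of $W$ in terms of the Witt index of $\xi_1^2+\cdots+\xi_{n-2}^2$ while the paper quotes the equivalent statement about maximal subspaces of the zero-radius sphere $S_{n-2}$ and then appends the $\xi_{n-1}$-axis.

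Where the two proofs genuinely diverge is the horizontal facet $r\ge\frac{2n-2}{n-2}$. The paper works on the \emph{restriction} side with a single test set for all $n$: it takes $g=\mathbf 1_\Gamma$ with $\Gamma=\{x:\,4x_{n-1}x_n=x_1^2+\cdots+x_{n-2}^2,\ x_n\ \text{a nonzero square}\}$, computes $|\widehat\Gamma(\xi)|\sim q^{n/2}$ for $\xi\in C_n$ with $\xi_{n-1}\ne0$, and reads off the bound. You work on the \emph{extension} side and split by parity: for even $n$ the constant test $f\equiv 1$ already exhibits the $q^{1-n/2}$ bump of $(d\sigma)^\vee$ on the dual cone, while for odd $n$ you twist by $\eta(\xi_n)$ so that the stray $\eta(\xi_n)$ from the $(n-2)$ Gauss sums is cancelled and the resonance reappears. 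The two are essentially dual to one another---the paper's restriction to square $x_n$ plays the same role as your character twist $\eta(\xi_n)$---but your even-$n$ case is slightly more economical (it is just the Fourier decay of the surface measure), whereas the paper's choice has the virtue of being a single construction that works uniformly in $n$ without a parity split.
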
  
It is conjectured that the above conditions are also sufficient for $R_{C_n}^*(p\to r)\lesssim 1$ to hold. 
Notice that if we obtain a $R_{C_n}^*(p\to r)$ result corresponding to  the point $(1/p, 1/r)$  in $[0,1]\times [0,1]$, then we can use  H\H{ol}der's inequality and interpolation with the trivial estimate $R_{C_n}^*(1\to \infty)\lesssim 1$ to get all estimates corresponding to all points in the convex hull of the points $(0,0), (1,0), \left( {1}/{p},~ {1}/{r}\right)$ and $\left(0,~ {1}/{r}\right).$ Hence, to settle the finite field restriction problem for cones in $\mathbb F_*^n$, it will be enough to solve the cases of critical endpoints. This leads to the following conjecture.  
\begin{conjecture}\label{Coneconj} The following statements are true.
\begin{enumerate}
\item If $n\ge 3$ is odd, then 
$$ R_{C_n}^*\left( \frac{2n^2-4n+2}{n^2-3n+4} \to \frac{2n-2}{n-2}\right)\lesssim 1.$$
\item With the assumption of (2)  in Lemma \ref{neceCone}, we have 
$$R_{C_n}^*\left( \frac{2n-2}{n-2}\to \frac{2n-2}{n-2}\right)\lesssim 1.$$
\item With the assumption of (3)  in Lemma \ref{neceCone}, we have
$$R_{C_n}^*\left( \frac{2n^2-2n}{n^2-2n+4} \to \frac{2n-2}{n-2}\right)\lesssim 1.$$
\end{enumerate}
\end{conjecture}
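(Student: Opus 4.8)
The estimate to be proved is part (3) of Conjecture~\ref{Coneconj} with $n=4$: the two exponents $\tfrac{2n^2-2n}{n^2-2n+4}$ and $\tfrac{2n-2}{n-2}$ specialize to $2$ and $3$ at $n=4$, and by the remark preceding Conjecture~\ref{Coneconj} a single endpoint estimate recovers the whole convex hull in part (3) of Lemma~\ref{neceCone}; hence everything reduces to showing that
$$\|(fd\sigma)^\vee\|_{L^3(\mathbb F^4,\,dx)}\ \lesssim\ \|f\|_{L^2(C_4,\,d\sigma)}\qquad\text{whenever}\qquad -1\notin(\mathbb F^*)^2.$$
Among all the endpoints in Conjecture~\ref{Coneconj}, the case $n=4$ of part (3) is --- apart from the three-dimensional case settled in \cite{MT04} --- the only one at which the first exponent equals $2$, so that $L^2$-based restriction--extension machinery is available; this is what makes four dimensions tractable. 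The plan is to use that, precisely when $-1$ is a non-square, the quadratic form $\xi_1^2+\xi_2^2$ is anisotropic, so $C_4$ carries no affine $2$-plane and fibers cleanly --- over the parameter $\lambda=\xi_3$ --- into rescaled copies of the three-dimensional paraboloid; this turns the problem into the sharp $L^2\to L^3$ extension estimate for that paraboloid together with a square-function-type reassembly of the fibers.

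Concretely, write $x=(x_1,x_2,x_3,x_4)$ and $y=(x_1,x_2,x_4)\in\mathbb F^3$. The portion of $C_4$ with $\xi_3\ne0$ is parametrized bijectively by $(\lambda,\zeta)\mapsto\lambda\,v(\zeta)$, $\lambda\in\mathbb F^*$, $\zeta\in\mathbb F^2$, with $v(\zeta)=(\zeta_1,\zeta_2,1,\zeta_1^2+\zeta_2^2)$, while anisotropy forces the complementary portion to be the single line $L=\{(0,0,0,t):t\in\mathbb F\}$. Since $x\cdot v(\zeta)=x_3+y\cdot w(\zeta)$ with $w(\zeta)=(\zeta_1,\zeta_2,\zeta_1^2+\zeta_2^2)$ a point of the paraboloid $P_3\subset\mathbb F^3$ (equipped with its normalized surface measure $d\sigma_{P_3}$), one gets
$$(fd\sigma)^\vee(x)=\frac{|P_3|}{|C_4|}\sum_{\lambda\ne0}e(\lambda x_3)\,G_\lambda(\lambda y)\;+\;\frac{1}{|C_4|}\sum_{t\in\mathbb F}f(0,0,0,t)\,e(x_4 t),$$
where $G_\lambda:=(g_\lambda d\sigma_{P_3})^\vee$ and $g_\lambda(w(\zeta)):=f(\lambda v(\zeta))$ is the $\lambda$-slice of $f$. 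First I would dispose of the second term: it depends only on $x_4$, so one-dimensional Hausdorff--Young, the bound $\sum_t|f(0,0,0,t)|^2\le|C_4|\,\|f\|_{L^2(d\sigma)}^2$, and the exact count $|C_4|=|\mathbb F|^3-|\mathbb F|^2+|\mathbb F|$ show it contributes $\lesssim\|f\|_{L^2(d\sigma)}$; thereafter $f$ may be taken supported off $L$.

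For the main term I would freeze $y$, apply Hausdorff--Young in the cone variable $x_3$ to $\sum_{\lambda\ne0}e(\lambda x_3)G_\lambda(\lambda y)$, then Cauchy--Schwarz in $y$ together with the measure-preserving substitution $y\mapsto\lambda y$ on $\mathbb F^3$, obtaining
$$\|(fd\sigma)^\vee\|_{L^3(\mathbb F^4)}^3\ \lesssim\ |\mathbb F|\left(\frac{|P_3|}{|C_4|}\right)^{3}\left(\sum_{\lambda\ne0}\|G_\lambda\|_{L^3(\mathbb F^3,\,dx)}^{3/2}\right)^{2}.$$
Feeding in the sharp paraboloid extension estimate $\|G_\lambda\|_{L^3(\mathbb F^3)}\lesssim\|g_\lambda\|_{L^2(P_3,\,d\sigma)}$ (valid since $-1$ is a non-square), then H\"older over the $|\mathbb F|-1$ slices and the identity $\sum_{\lambda\ne0}\|g_\lambda\|_{L^2(d\sigma_{P_3})}^2\lesssim|\mathbb F|\,\|f\|_{L^2(C_4,\,d\sigma)}^2$, and using $|P_3|=|\mathbb F|^2$, $|C_4|\sim|\mathbb F|^3$, all powers of $|\mathbb F|$ cancel exactly and yield $\|(fd\sigma)^\vee\|_{L^3}\lesssim\|f\|_{L^2(d\sigma)}$.

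The hard part is the ingredient used in that last step: the sharp $L^2\to L^3$ Fourier extension estimate for $P_3\subset\mathbb F^3$ when $-1$ is a non-square. This is exactly the exponent predicted by the elementary single-point and whole-surface examples, but it sits a factor $|\mathbb F|^{1/6}$ past what the bare Stein--Tomas interpolation (between $\|\widehat{d\sigma_{P_3}}\|_{L^\infty}\lesssim|\mathbb F|^{-1}$ off the origin and the trivial $L^2\to L^2$ bound) gives. Establishing it is where the arithmetic of $-1$ re-enters essentially: one exploits that $P_3$ then contains no affine line, the explicit Gauss-sum form of $\widehat{d\sigma_{P_3}}$, and a point--plane incidence estimate over $\mathbb F^3$ of Rudnev type inside the $T^*T$ (convolution against $(d\sigma_{P_3})^\vee$) analysis --- which is also the bridge to the incidence-geometry half of the paper. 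A secondary, bookkeeping-type hazard is to verify that the two reassembly steps --- Hausdorff--Young in $x_3$ and H\"older over the slices --- lose nothing; this is where the smallness of the exceptional fiber $L$ and the precise value of $|C_4|$ are needed, and where a crude estimate would leak a power of $|\mathbb F|$.
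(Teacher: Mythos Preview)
Your reduction by fibering $C_4$ over the paraboloid $P_3$ is elegant and the bookkeeping is plausible, but the argument has a genuine gap at exactly the place you flag as ``the hard part'': the estimate
\[
\|(g\,d\sigma_{P_3})^\vee\|_{L^3(\mathbb F^3)}\ \lesssim\ \|g\|_{L^2(P_3,\,d\sigma_{P_3})}
\]
for the three-dimensional paraboloid with $-1$ a non-square is \emph{open}. This is precisely the Mockenhaupt--Tao conjecture $R^*_{P_3}(2\to 3)\lesssim 1$, discussed at length in the introduction: the best known exponent is $r\approx 3.547$ (Lewko), and Rudnev--Shkredov observe that even granting the optimal additive-energy bound one reaches only $r=10/3$. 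Your sketch of a proof via ``a point--plane incidence estimate over $\mathbb F^3$ of Rudnev type'' does not close this gap --- that circle of ideas is exactly what produces the $10/3$ barrier --- and in any case Rudnev's incidence theorem requires the field to be prime, whereas the theorem here is stated for all $\mathbb F$ with $|\mathbb F|\equiv 3\pmod 4$. So as written the argument assumes something strictly stronger than what it is trying to prove.

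The paper's proof avoids the paraboloid entirely. It works directly with the cone and exploits a purely arithmetic feature of the dimension: when $n\equiv 0\pmod 4$ and $|\mathbb F|\equiv 3\pmod 4$, the explicit Gauss-sum evaluation gives $\mathcal G_1^{\,n-2}=-|\mathbb F|^{(n-2)/2}$, and this minus sign forces the inverse Fourier transform $C_n^\vee(x)$ to satisfy $C_n^\vee(x)\le |\mathbb F|^{-(n+2)/2}$ for \emph{every} $x\ne 0$ (the large values on the dual cone $C_n^*$ come with a negative sign). Plugging this into $\sum_{\xi\in C_n}|\widehat G(\xi)|^2=|\mathbb F|^n\sum_{x,y\in G}C_n^\vee(x-y)$ for a characteristic function $G$ immediately yields the sharp $L^2$ restriction inequality on indicators, and a routine dyadic decomposition upgrades this to general test functions. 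No incidence geometry, no paraboloid estimate, and no prime-field hypothesis are needed; the entire saving comes from the sign of the Gauss sum. Your fibering approach, by contrast, would make the cone result \emph{equivalent in difficulty} to the still-unresolved $P_3$ conjecture.
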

This conjecture can be described clearly in Figure \ref{figure}.  



It follows from Mockenhaupt and Tao's result that $R_{C_3}^*(2\to 4)\lesssim 1$ for the cone in three dimensions, which matches the first case of  Conjecture \ref{Coneconj} with $n=3$. 

In the first result of this paper, we confirm the cone restriction conjecture for four dimensions in the case when $-1\in \mathbb F$ is not a square. More precisely, we have the following theorem. 
\begin{theorem}\label{bosung}
Let $\mathbb{F}$ be a finite field. Suppose that $|\mathbb{F}|\equiv 3\mod 4$, then we have 
\[R^*_{C_4}\left(2\to 3\right)\lesssim 1.\]
\end{theorem}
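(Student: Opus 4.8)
The plan is to pass to the dual (restriction) formulation, reduce to an $\ell^2$ bound for the restricted Fourier transform of an indicator set, and then fibre $C_4$ over the third coordinate into paraboloids, feeding in the sharp $L^2\to L^4$ extension estimate for the paraboloid in $\mathbb F^3$. Concretely, by the duality \eqref{dualEx} the bound $R^*_{C_4}(2\to 3)\lesssim 1$ is equivalent to $\|\widehat g\|_{L^2(C_4,d\sigma)}\lesssim\|g\|_{L^{3/2}(\mathbb F^4,dx)}$, and a routine dyadic pigeonholing on the level sets of $|g|$ reduces this, up to constants, to $g=1_A$ with $A\subseteq\mathbb F^4$. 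Writing $q=|\mathbb F|$ and using $|C_4|\sim q^3$, the task becomes to prove
\[\sum_{\xi\in C_4}\big|\widehat{1_A}(\xi)\big|^{2}\ \lesssim\ q^{3}\,|A|^{4/3}\qquad\text{for every }A\subseteq\mathbb F^{4}.\]

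I would slice $C_4$ by the hyperplanes $\{\xi_3=s\}$. Since $q\equiv 3\bmod 4$, $-1$ is a non-square, so $\xi_1^2+\xi_2^2=0$ forces $\xi_1=\xi_2=0$; hence $C_4\cap\{\xi_3=0\}$ is just the line $\{(0,0,0,t)\}$, and its contribution equals $q\sum_{c}|A\cap\{x_4=c\}|^{2}\le q\min\!\big(|A|^{2},\,q^{3}|A|\big)\lesssim q^{3}|A|^{4/3}$, which is harmless. For $s\ne 0$, after the volume-preserving dilation $\xi_4\mapsto\xi_4/s$ the slice $C_4\cap\{\xi_3=s\}$ becomes a copy of the paraboloid $P_3=\{(\eta_1,\eta_2,\eta_1^2+\eta_2^2)\}\subseteq\mathbb F^3$. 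Introducing the partial Fourier transform $w_s(x_1,x_2,x_4)=\sum_{x_3}1_A(x_1,x_2,x_3,x_4)\,e(-x_3 s)$, one checks directly that $\sum_{\xi\in C_4,\ \xi_3=s}|\widehat{1_A}(\xi)|^{2}=\sum_{\zeta\in P_3}|\widehat{w_s}(\zeta)|^{2}$, where the hat now denotes the three-dimensional transform and the dilation only relabels $w_s$.

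Now the Mockenhaupt--Tao estimate $R^*_{P_3}(2\to 4)\lesssim 1$ — which holds exactly because $-1$ is a non-square — yields by duality $\sum_{\zeta\in P_3}|\widehat{w_s}(\zeta)|^{2}\lesssim q^{2}\|w_s\|_{L^{4/3}(\mathbb F^3)}^{2}$ for every $s$. Summing over $s\ne 0$ and combining this with the trivial estimate $\sum_{\zeta\in P_3}|\widehat{w_s}(\zeta)|^{2}\le q^{3}\|w_s\|_{L^{2}(\mathbb F^3)}^{2}$, the Plancherel identity $\sum_{s}\|w_s\|_{L^{2}(\mathbb F^3)}^{2}=q|A|$, the pointwise bound $|w_s|\le m$ with $m(x_1,x_2,x_4)=|\{x_3:(x_1,x_2,x_3,x_4)\in A\}|$ and $\sum m=|A|$, and Hölder's inequality in the spatial variables and in $s$, one arrives at the displayed bound and hence at the theorem.

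The crux — and the only serious difficulty — is carrying out this last summation without loss: a crude combination of $R^*_{P_3}(2\to 4)$ with Hölder costs a power of $|A|$ in the range $q^{2}\lesssim|A|\lesssim q^{3}$, because the $L^2\to L^4$ paraboloid inequality is far from sharp for the exponential-sum functions $w_s$ that occur here. I would recover this by a dichotomy on the slice energies $E_s=\|w_s\|_{L^{2}(\mathbb F^3)}^{2}$: on the high-energy slices — of which there are at most $q^{3}/|A|$, since $\sum_{s\ne 0}E_s\le q|A|$ — use the trivial bound, and on the rest use the paraboloid estimate, exploiting the additional structure that each $w_s$ is the Fourier transform of the one-dimensional fibre $\{x_3:(x_1,x_2,x_3,x_4)\in A\}$ (equivalently, feeding in a refinement of the $P_3$ extension estimate valid for such inputs). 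An alternative, purely four-dimensional route would be to establish a bilinear $L^2$ extension estimate for transversal pieces of $C_4$ and then run a Tao--Vargas--Vega-type summation down to the linear $L^3$ endpoint. Either way the hypothesis $q\equiv 3\bmod 4$ is essential: when $-1$ is a square both $P_3$ and $C_4$ contain $2$-planes and all of the estimates above genuinely fail.
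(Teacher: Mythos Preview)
Your reduction to indicator functions and the fibring of $C_4$ over $\xi_3$ into copies of $P_3$ are both correct, but the argument stalls exactly where you say it does, and the proposed repair is not a proof. Feeding $R^*_{P_3}(2\to 4)$ into each slice and combining with Plancherel and the interpolation $\|w_s\|_{L^{4/3}}^2\le\|w_s\|_{L^1}\|w_s\|_{L^2}$ yields only $\sum_{\xi\in C_4}|\widehat{1_A}(\xi)|^2\lesssim q^3|A|^{3/2}$, i.e.\ $R^*_{C_4}(2\to 4)$, which is the Stein--Tomas exponent already noted in the paper. Your dichotomy on the slice energies $E_s$ is not carried out (the threshold and the count of ``high'' slices are left unspecified and do not close as written), and the promised ``refinement of the $P_3$ extension estimate valid for such inputs'' is an assertion, not an argument; the bilinear alternative is a separate program. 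There is also a structural warning sign: apart from the $\xi_3=0$ slice, your machinery never uses $q\equiv 3\bmod 4$, yet $R^*_{C_4}(2\to 3)$ genuinely fails when $-1$ is a square (cf.\ Lemma~\ref{neceCone}(2)), so any correct completion must bring the hypothesis in for the $s\ne 0$ slices as well. A small side point: $R^*_{P_3}(2\to 4)\lesssim 1$ is the Stein--Tomas bound and holds for every $\mathbb F$, not ``exactly because $-1$ is a non-square''.

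The paper's route is different and avoids the summation issue entirely. One expands $\sum_{\xi\in C_4}|\widehat{1_A}(\xi)|^2=|\mathbb F|^4\sum_{x,y\in A}C_4^\vee(x-y)$ and uses the \emph{explicit} value of $C_4^\vee$ (Proposition~\ref{lem4.1} together with Lemma~\ref{Gsign}): when $q\equiv 3\bmod 4$ one has $\mathcal G_1^2=-q$, so $C_4^\vee(x)$ equals $q^{-3}$ off the dual cone $C_4^*$ and is \emph{negative} (up to the $\delta_{\bf 0}$ spike) on $C_4^*$. The potentially large on-cone contribution thus has the favourable sign and can simply be discarded, leaving $\sum_{\xi}|\widehat{1_A}|^2\le q^3|A|+q\,|A|^2$. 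This is Lemma~\ref{WL2} with $n=4$, and a routine level-set decomposition upgrades it to general $g$. The hypothesis enters precisely through the sign of $\mathcal G_1^{n-2}$, which is the mechanism your slicing approach is missing.
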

It is often difficult to determine a method to prove restriction estimates, and even much harder for the case of critical points in the finite field setting. For instance, let $P_n$ be the paraboloid in $\mathbb{F}_*^n$ defined by $\xi_n=\xi_1^2+\cdots +\xi_{n-1}^2$, there are only two known  methods to study the boundedness of $R_{P_n}^*(p\to r)$. The first is the finite field Stein-Tomas argument introduced by Mockenhaupt and Tao in the same paper, which mainly relies on the Fourier decay of the associated variety. This method only gives us estimates on $R_{P_n}^*(2\to r)$ which in general do not cover critical endpoints. In particular, Mockenhaupt and Tao in \cite{MT04} proved that $R_{P_n}^*\left(2\to \frac{2n+2}{n-1}\right)\lesssim 1$, which holds for all dimensions, but is only sharp when either $n=4\ell-1$ and $-1$ is a square or $n=4\ell+1$ for $\ell \in \mathbb N.$  In three dimensions, assuming that  $-1$ is not a square, they also proved that $R^*_{P_3}\left(2\to \frac{18}{5}+\varepsilon \right)\lesssim 1$ for all $\varepsilon>0.$ While $R^*_{C_3}(2\to 4)\lesssim 1$ is optimal for the cone $C_3$, it has been conjectured for the case of the paraboloid $P_3$ with $-1$ non-square that $R^*_{P_3}(2\to 3)\lesssim 1$ (see \cite{MT04}). The second machinery, introduced by Mockenhaupt and Tao in \cite{MT04} and  developed by Lewko in \cite{Le13}, allows us to  reduce the problem of bounding $R_{P_n}^*(2\to r)$ to an energy structure which has a connection to  point-line incidences. Over last five years, this method has been used intensively to obtain new $L^2$ restriction estimates in three dimensions along with new point-line incidence bounds. The best current bound is $R_{P_3}^*(2\to 3.547...)\lesssim 1$ due to Lewko in \cite{Le20}, which strengthens the argument in the paper \cite{RS19} by Rudnev and Shkredov, in which in turn the bound $R^*_{P_3}\left( 2\to \frac{32}{9}\right)\lesssim 1$ was attained via the Stevens-De Zeeuw point-line incidence bound \cite{SZ17}. It has been mentioned in \cite{RS19} that with the best energy bound, the current method only gives us $R^*_{P_3}\left( 2\to \frac{10}{3}\right)\lesssim 1$ that is far from the conjecture. 

For the finite field cone problem in dimensions $n\ge 4,$ the Stein-Tomas argument is the only known technique in the literature. One can check that this technique with  the sharp Fourier decay of the surface measure associated to the cone $C_n$ (Corollary \ref{cor4.4}) gives us  $R^*_{C_n}\left(2\to \frac{2n}{n-2}\right) \lesssim 1$ in all even dimensions. Thus, when $n=4$, we will get $R^*_{C_4}\left(2\to 4\right)\lesssim 1$, which is much inferior compared to Theorem \ref{bosung}. 

The main idea in our method is that we are able to decompose the square of the $L^2$ norm $||\widehat{g}||_{L^2(C_n, d\sigma)}$ for characteristic functions $g$ into several terms in a way that we can determine the sign of each term, and the terms with large absolute values usually have negative sign. Furthermore, the sign of each term in the decomposition will depend on the sign of the corresponding Fourier transform, which will be determined with the help of the explicit form of the Gauss sum in our dimensions. It turns out that the $L^2$ norm $||\widehat{g}||_{L^2(C_n, d\sigma)}$, when $n\equiv 0\mod 4$ and $|\mathbb{F}|\equiv 3\mod 4$, can be very small compared to other dimensions. This is the most interesting perspective of our proofs. 

We also note that in four dimensions, for the paraboloid problem, Rudnev and Shkredov \cite{RS19} proved  the same result, namely,  $R^*_{P_4}(2\to 3)\lesssim 1$, which gives the sharp $``r$" index for  $R^*_{P_4}(2\to r)\lesssim 1$ to hold in this dimension. However, their result does not solve completely the conjecture in four dimensions since it does not imply the conjectured result $R^*_{P_4}(16/7\to 8/3) \lesssim 1$ for $P_4$ (see \cite[Conjecture 2.4]{Ko16}). Moreover,  their result only holds in the setting of prime fields. This assumption comes from their main tool which is  Rudnev's point-plane incidence bound \cite{R}. 

In the Euclidean setting, the cone restriction problem has a reputed history, which we are not going to present. However, it is necessary to mention that the problem in four dimensions has been settled by Wolff in \cite{cone1} since the 2000s. The five dimensional problem was recently solved by Ou and Wang in \cite{cone2} by using the polynomial partitioning method. We refer the interested reader to \cite{cone2} and references therein for more details. Compared to Wolff's proof in the Euclidean setting and Rudnev and Shkredov's argument for the paraboloid $P_4$, our method in the proof of Theorem \ref{bosung} is totally different and can be extended to higher dimensions. If $|\mathbb{F}|\equiv 3\mod 4$ and $n\equiv 0\mod 4$, we have the following extension of Theorem \ref{bosung}.

\begin{theorem}\label{mainthm} Let $\mathbb F$ be a finite field with $|\mathbb F|\equiv 3 \mod 4.$  If $n\equiv 0 \mod 4,$ then 
$$ R_{C_n}^*\left(2\to \frac{2n+4}{n}\right)\lesssim 1.$$
\end{theorem}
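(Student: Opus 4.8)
The plan is to run, in every dimension $n\equiv 0\bmod 4$, the scheme that proves Theorem~\ref{bosung} (which is the case $n=4$). Write $q=|\mathbb F|$ and $r=\tfrac{2n+4}{n}$, so that $\tfrac{2}{r'}=\tfrac{n+4}{n+2}$; by the standard reduction to characteristic functions it is enough to show
$$\|\widehat{1_E}\|_{L^2(C_n,d\sigma)}^2\ \lesssim\ |E|^{\frac{n+4}{n+2}}\qquad\text{for every }E\subseteq\mathbb F^n.$$
First I would expand $\|\widehat{1_E}\|_{L^2(C_n,d\sigma)}^2=\sum_{x,y\in E}(d\sigma)^\vee(x-y)$, insert the identity $1_{C_n}(\xi)=q^{-1}\sum_{t\in\mathbb F}e\big(t(\xi_{n-1}\xi_n-\xi_1^2-\cdots-\xi_{n-2}^2)\big)$, and evaluate the resulting complete exponential sums by completing the square and invoking Gauss sums. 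The place where the hypotheses enter is the value of $\mathfrak g^{\,n-2}$, where $\mathfrak g=\sum_{s\in\mathbb F}e(s^2)$ and $\eta$ is the quadratic character: since $\mathfrak g^2=\eta(-1)q$, since $(n-2)/2$ is \emph{odd} (as $n\equiv 0\bmod 4$), and since $\eta(-1)=-1$ (as $q\equiv 3\bmod 4$), one gets $\mathfrak g^{\,n-2}=-q^{(n-2)/2}$, a \emph{negative} real number. Carrying the computation through, $|C_n|=q^{n-1}-q^{n/2}+q^{n/2-1}$ and
$$\|\widehat{1_E}\|_{L^2(C_n,d\sigma)}^2\ =\ \frac{q^{n-1}|E|-q^{n/2}\,N(E)+q^{n/2-1}|E|^2}{|C_n|},\qquad N(E):=\#\{(x,y)\in E\times E:\ G(x-y)=0\},$$
where $G(z)=z_1^2+\cdots+z_{n-2}^2-4z_{n-1}z_n$ is the defining form of $C_n$ up to a linear change of variables. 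Equivalently, this records the explicit shape of $(d\sigma)^\vee$: it is $O(q^{-n/2})$ and positive off the variety $\{G=0\}$, but \emph{negative}, of size $\sim q^{1-n/2}$, on $\{G=0\}\setminus\{0\}$ — and it is exactly this comparatively large, negatively signed contribution that produces the improvement over the Stein--Tomas range.

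The estimate then splits at $|E|\sim q^{(n+2)/2}$. In the range $|E|\le q^{(n+2)/2}$ I would simply discard the negative middle term (using $N(E)\ge 0$); since $|C_n|\sim q^{n-1}$ this gives $\|\widehat{1_E}\|_{L^2(C_n,d\sigma)}^2\lesssim |E|+q^{-n/2}|E|^2$, and both terms are $\lesssim|E|^{(n+4)/(n+2)}$ here (the second because $|E|^{n/(n+2)}\le q^{n/2}$). In the complementary range $|E|>q^{(n+2)/2}$ the middle term cannot be dropped, so I would establish the unconditional lower bound
$$N(E)\ \ge\ \frac{|C_n|}{q^n}\,|E|^2-q^{n/2}|E|\ \ge\ \frac{|E|^2}{q}-\frac{|E|^2}{q^{n/2}}-q^{n/2}|E|.$$
This follows from a second, essentially identical, Gauss-sum computation: by Plancherel $N(E)=q^{-n}\sum_{\xi}|\widehat{1_E}(\xi)|^2\,\widehat{1_{V_G}}(\xi)$, and the Fourier transform of the indicator of the cone $V_G=\{G=0\}$ is $\widehat{1_{V_G}}(\xi)=-q^{n/2-1}\big(q\cdot 1_{C_n}(\xi)-1\big)$ for $\xi\neq 0$, the global minus sign being again the factor $\eta(\det M)$, $M$ the Gram matrix of $G$, which equals $\eta(-4)=\eta(-1)=-1$. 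Keeping only the $\xi=0$ term together with the (negative) contribution over $C_n\setminus\{0\}$, and bounding $\sum_{\xi\in C_n\setminus\{0\}}|\widehat{1_E}(\xi)|^2\le\sum_{\xi}|\widehat{1_E}(\xi)|^2=q^n|E|$ by Plancherel, gives the displayed bound with no circularity. Substituting it into the identity, the terms $\pm q^{n/2-1}|E|^2$ cancel and one is left with $\|\widehat{1_E}\|_{L^2(C_n,d\sigma)}^2\lesssim q|E|+q^{1-n}|E|^2$; here $q|E|\le|E|^{2/(n+2)}|E|=|E|^{(n+4)/(n+2)}$ by the case hypothesis, and $q^{1-n}|E|^2\le|E|^{(n+4)/(n+2)}$ since $|E|\le q^n\le q^{(n-1)(n+2)/n}$. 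Together the two ranges give the theorem, and for $n=4$ they recover Theorem~\ref{bosung}.

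The step I expect to be the main obstacle is the first one: isolating the correct "dual" quadratic form $G$ and, above all, pinning down the precise signs of $\mathfrak g^{\,n-2}$, of $\mathfrak g^{\,n}$, and of $\eta(\det M)$ — everything afterwards is elementary bookkeeping once these signs are in hand, and it is exactly here that the arithmetic conditions $n\equiv 0\bmod 4$ and $q\equiv 3\bmod 4$ are used. A secondary point requiring care is that the two ranges fit together without gap (they meet exactly at $|E|\sim q^{(n+2)/2}$), so that every size of $E$ is covered.
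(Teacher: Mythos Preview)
Your computation for characteristic functions is correct and matches the paper's Lemma~\ref{WL2}: expanding $\|\widehat{1_E}\|_{L^2(C_n,d\sigma)}^2$ via the explicit formula for $C_n^\vee$ (the paper's Lemma~\ref{Luck}) and exploiting the negative sign of $\mathcal G_1^{\,n-2}$ is precisely what the paper does. Two points deserve comment.

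\textbf{The reduction to characteristic functions.} Your claim that it suffices to show the single bound $\|\widehat{1_E}\|_{L^2}^2\lesssim |E|^{(n+4)/(n+2)}$ is not quite right: by duality that bound only yields the \emph{weak}-type extension estimate $\|(fd\sigma)^\vee\|_{L^{r,\infty}}\lesssim\|f\|_{L^2}$, and the dyadic decomposition $g\sim\sum_i 2^{-i}1_{G_i}$ followed by the triangle inequality loses a factor $\sim\log q$ at the endpoint. What the paper actually uses (and what your own argument in fact establishes) is the sharper regime-dependent bound
\[
\|\widehat{1_E}\|_{L^2(C_n,d\sigma)}^2\ \lesssim\ \min\bigl(|E|+q^{-n/2}|E|^2,\ q|E|\bigr),
\]
which is strictly better than $|E|^{(n+4)/(n+2)}$ away from $|E|\sim q^{(n+2)/2}$; splitting the dyadic sum into the three ranges $|G_i|\le q^{n/2}$, $q^{n/2}\le |G_i|\le q^{(n+2)/2}$, $|G_i|\ge q^{(n+2)/2}$ then gives geometrically convergent sums and no log loss. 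Since your computations already produce exactly these bounds, this is only a matter of stating the reduction correctly.

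\textbf{The large-$|E|$ range.} Here your route diverges from the paper. You lower-bound $N(E)$ via a second Gauss-sum computation (the Fourier transform of $1_{C_n^*}$) and feed this back into the identity to obtain $\|\widehat{1_E}\|_{L^2}^2\lesssim q|E|+q^{1-n}|E|^2$. This is correct, but the paper gets $\|\widehat{1_E}\|_{L^2}^2\lesssim q|E|$ in one line by Plancherel: $|C_n|^{-1}\sum_{\xi\in C_n}|\widehat{1_E}(\xi)|^2\le |C_n|^{-1}\sum_{\xi\in\mathbb F_*^n}|\widehat{1_E}(\xi)|^2\sim q|E|$. Your approach is more symmetric in spirit but unnecessarily elaborate for this step.
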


\begin{figure}[h!]
\includegraphics[width=0.8\textwidth]{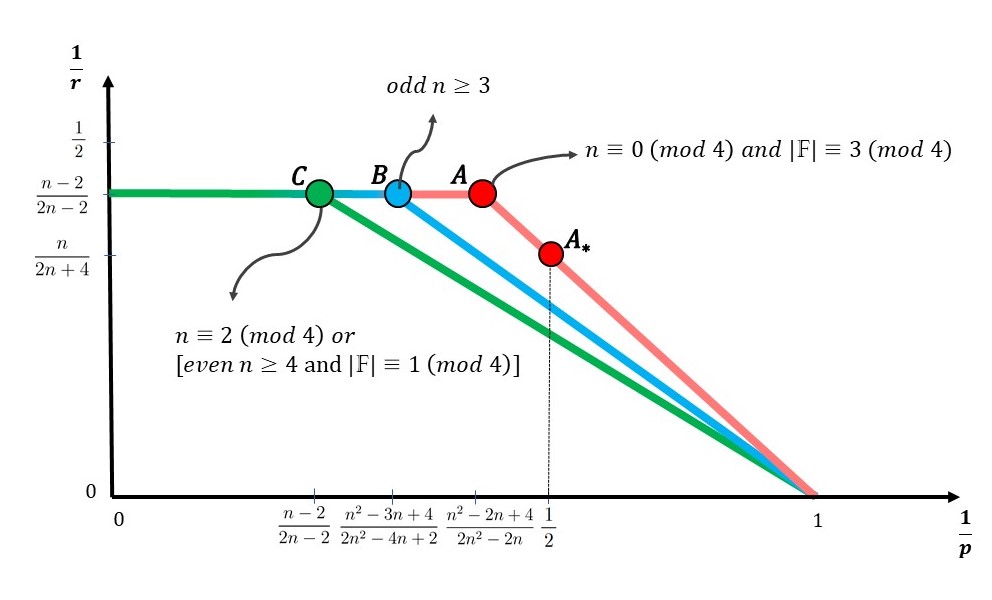}
\caption{ The points $A, B, C$ indicate the critical endpoints  for the boundedness of $R^*_{C_n} (p\to r),$ which are stated in Conjecture \ref{Coneconj}.  The point $A_*$ presents the result of 
of Theorem  \ref{mainthm}, and agrees with the critical endpoint $A$ for the case when $d=4$ and $|\mathbb{F}|\equiv 3\mod 4.$ }
\label{figure}
\end{figure}

One can check from Conjecture \ref{Coneconj} and Figure \ref{figure} that Theorem \ref{mainthm} gives us the sharp $``r"$ index for the $R_{C_n}(2\to r)$ bound when $n\equiv 0\mod 4$ and $|\mathbb{F}|\equiv 3\mod 4$, but it does not cover the critical endpoint for the conjecture when $n\ge 8$ (see, for example, Figure \ref{figure}). In other words, only in four dimensions, the conjecture is settled for the case when $-1\in \mathbb F$ is not a square.

For other dimensions, the $L^2$ sharp restriction estimates have been obtained by the Stein-Tomas argument in \cite[Theorem 2.1]{kohsen} by the first listed author and Shen. Actually, the $L^2$ restriction estimates in \cite{kohsen} were proved for a very general variety, which covers the case of the cone $C_n$.


\subsection{Applications in incidence geometry}

As we have seen,  incidence structures play the most important role in studying restriction problem for the paraboloid over finite fields, see also \cite{iooooo, KPV18, Le13, Le19, Le20, RS19, MT04}. There is also a numerous applications of incidence geometry in different topics. For instance, by using the Rudnev's point-plane incidence bound, Murphy, Petridis, Rudnev, Stevens, and the third listed author \cite{mu} obtained the exponent $5/4$ on the Erd\H{o}s-Falconer distance problem in two dimensions over prime fields, namely, they proved that for any set $P\subset \mathbb{F}^2$ with $|P|\gtrsim |\mathbb{F}|^{5/4}$,  the number of pinned distinct distances determined by $P$ is at least $c|\mathbb{F}|$ for some positive constant $c$. Notice that in the continuous setting, the exponent $5/4$ has been confirmed earlier for the Falconer distance problem in $\mathbb{R}^2$ by Guth, Iosevich, Ou, and Wang \cite{guth} by using the decoupling theory. There are also other important applications in additive combinatorics and theoretical computer science. We refer the interested reader to \cite{HH1, HH2, LEE, ROche} for more details. 

It is natural to ask if one can obtain new incidence bounds from restriction estimates, i.e. the reverse implication. The second purpose of this paper is on this direction. More precisely, we will use $L^2$ cone restriction estimates to derive new and optimal point-sphere incidence bounds in the finite field setting. This offers us a new way to think of incidence problems.

Before stating our results, we need to establish some notation. Given a vector $x=(x_1,\ldots, x_d)$ in $\mathbb F^d$, we define
$$ ||x||=x_1^2+\cdots+x_d^2.$$ 
Unlike the Euclidean case,  we regard $||x||$ as the distance between $x$ and the origin in $\mathbb F^d.$ 
Given $a\in \mathbb F^d, r\in \mathbb F,$ we define 
$$ S_d(a, r) :=\{x\in \mathbb F^d: ||x-a||=r\},$$ 
which will be named  the sphere of radius $r,$ centered at $a\in \mathbb F^d.$\\

 Let $P$ be a set of points in $\mathbb F^d$ and $S$ be a set of spheres in $\mathbb F^d$ with arbitrary radii. 

Given a complex function $w: S\to \mathbb C,$  the number of point-sphere incidences associated with the function $w$ is defined by
$$ I_w(P,S) := \sum_{p\in P, s\in S} 1_{p\in s} w(s), $$
where $1_{p\in s}=1$ if $p\in s$, and $0$ otherwise. When $w(s)=1$ for all $s\in S$, we write $I(P, S)$ instead of $I_w(P, S)$.

If one uses Theorem \ref{mainthm} as a black box, then we have 
\[\left\vert I_{w}(P, S)-|\mathbb{F}|^{-1}|P|\sum_{s\in S}w(s)\right\vert\lesssim |\mathbb{F}|^{\frac{d^2+3d-2}{2d+8}}|P|^{\frac{1}{2}}\left(\sum_{s\in S}|w(s)|^{\frac{2d+8}{d+6}}\right)^{\frac{d+6}{2d+8}},\]
for a point set $P\subset \mathbb{F}^d$ with 
$d\equiv 2\mod 4$ and $|\mathbb{F}|\equiv 3\mod 4$, and for  a collection $S$ of spheres in $\mathbb F^d$  (see Remark \ref{batky} for an explanation). 

However, by using the fact that the the support of the function $w$ is a subset of $S$, the $L^2$ Fourier restriction estimate with characteristic test functions gives us sharp exponents as follows.
\begin{theorem} \label{incidence-theorem}
Let $P$ be a set of points in $\mathbb F^d$ and $S$ be a set of spheres in $\mathbb F^d.$ Suppose that $w$ is a complex-valued function on $S.$ 
\begin{enumerate}
\item If $d\equiv 2 \mod{4}$, $|\mathbb F|\equiv 3 \mod{4}$, and $|S|\le |\mathbb{F}|^{\frac{d}{2}}$, then we have
$$\left|I_w(P, S)-|\mathbb F|^{-1} |P|\sum\limits_{s\in S} w(s) \right|\lesssim |\mathbb{F}|^{\frac{d-1}{2}}|P|^{\frac{1}{2}}\left(\sum_{s\in S } |w(s)|^2\right)^{\frac{1}{2}}.$$
\item If $d\equiv 0 \mod{4},$ or  $d$ is even and $|\mathbb F|\equiv 1 \mod{4}$, then the same conclusion holds under the condition $|S|\le |\mathbb{F}|^{\frac{d-2}{2}}$. 
\item If $d\ge 3$ is an odd integer, then the same conclusion holds under the condition $|S|\le |\mathbb{F}|^{\frac{d-1}{2}}$. 
\end{enumerate}
\end{theorem}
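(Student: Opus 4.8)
The plan is to deduce the incidence bounds in Theorem \ref{incidence-theorem} from the corresponding $L^2$ cone restriction estimates via a standard Fourier-analytic dualization, the key point being that the incidence counting function for spheres can be linearized on the cone $C_{d+2}$. First I would recall the classical lifting trick: a sphere $S_d(a,r)$ in $\mathbb F^d$ corresponds, after homogenization, to a point on the cone in two higher dimensions. Concretely, writing $x\in \mathbb F^d$ and lifting to $(x, 1, \|x\|)\in \mathbb F^{d+2}$, one checks that the condition $\|x-a\|=r$ becomes a linear equation in the lifted coordinates, with the coefficient vector lying on the cone $C_{d+2}$ (up to the usual normalization; here the signature and the square/non-square nature of $-1$ is exactly what dictates which case of the conjecture we invoke, hence the split into (1), (2), (3) matching Lemma \ref{neceCone} and the available $L^2$ estimates). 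Thus
\[
I_w(P,S) \;=\; \sum_{p\in P}\sum_{s\in S} 1_{p\in s}\, w(s)
\]
can be rewritten as a sum over the cone of $\widehat{g}(\xi)\,\overline{\widehat{h}(\xi)}$-type quantities, where $g = 1_P$ (pushed forward to $\mathbb F^{d+2}$ via the lifting) and $h$ encodes the weighted sphere set $S$ with weight $w$, and the ``main term'' $|\mathbb F|^{-1}|P|\sum_s w(s)$ is precisely the $\xi=0$ contribution (the zero frequency), which I would separate out first.

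Next I would estimate the remaining non-zero-frequency part. After peeling off $\xi = 0$, Cauchy–Schwarz on $C_{d+2}$ (with respect to $d\sigma$) splits the sum into $\|\widehat{g}\|_{L^2(C_{d+2},d\sigma)}$ times $\|\widehat{h}\|_{L^2(C_{d+2},d\sigma)}$, each taken over the nonzero frequencies, up to the normalizing factor $|C_{d+2}|$ which contributes a power of $|\mathbb F|$. For the factor coming from $g=1_P$ I would apply the extension/restriction estimate $R^*_{C_{d+2}}(2\to r)\lesssim 1$ in the dual form \eqref{dualEx}, i.e. $\|\widehat{g}\|_{L^2(C_{d+2},d\sigma)}\lesssim \|g\|_{L^{r'}(\mathbb F^{d+2})} = |P|^{1/r'}$; this is exactly where Theorem \ref{mainthm} (case $d\equiv 0\bmod 4$ after shifting $n=d+2$, hence $d+2\equiv 2\bmod 4$ — matching case (1) with $|\mathbb F|\equiv 3\bmod 4$), the square-$(-1)$ estimate from \cite{kohsen} or Lemma \ref{neceCone}(2) (case (2)), and the odd-dimensional Stein–Tomas bound (case (3)) enter, each giving the claimed power $|\mathbb F|^{(d-1)/2}$ after bookkeeping the measure normalizations and the value of $r$. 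For the factor coming from the sphere set, the crucial simplification flagged in the paper is that $h$ is supported on a set of size $|S|$, so rather than losing when $|S|$ is large one uses $\|\widehat h\|_{L^2(C,d\sigma)}\le$ (Plancherel/trivial bound) $\lesssim (|S|/|C|)^{1/2}\cdot(\sum_s|w(s)|^2)^{1/2}$-type control, and the hypothesis $|S|\le |\mathbb F|^{d/2}$ (resp. $|\mathbb F|^{(d-2)/2}$, $|\mathbb F|^{(d-1)/2}$) is exactly the threshold ensuring the sphere-side contribution does not dominate, i.e. it is absorbed into the stated bound $|\mathbb F|^{(d-1)/2}|P|^{1/2}(\sum_s |w(s)|^2)^{1/2}$.

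I would then carry out the exponent arithmetic carefully to confirm the three cases: substitute $n=d+2$ into the relevant $r$-index ($r=\tfrac{2n+4}{n}$ in Theorem \ref{mainthm}, etc.), track the $L^2(d\sigma)$ versus $\ell^2$ normalization constants (each $d\sigma$-norm carries a factor $|C_{d+2}|^{-1/2}\sim |\mathbb F|^{-(d+1)/2}$), and check that $|P|^{1/r'}$ combined with these factors collapses to $|\mathbb F|^{(d-1)/2}|P|^{1/2}$ — note the appearance of $|P|^{1/2}$ rather than $|P|^{1/r'}$ will require using that $|P|\le |\mathbb F|^d$ together with the size threshold on $S$, or equivalently interpolating the restriction estimate against the trivial $L^1$ bound; this interpolation step, and getting the weight exponent to come out as exactly $2$ (the $\ell^2$ norm of $w$) rather than some $r$-dependent exponent, is where I expect the only real friction. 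The main obstacle, then, is not any single deep inequality — all the heavy lifting is done by the cited restriction theorems — but rather organizing the lifting dictionary so that the quadratic form defining incidences is genuinely the cone in $\mathbb F^{d+2}_*$ with the correct discriminant in each parity/square class, and then verifying that the size conditions on $|S|$ are precisely the ones under which the elementary sphere-side estimate matches the restriction-side estimate, yielding sharpness.
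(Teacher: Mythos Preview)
Your lifting idea is correct, but the way you propose to deploy the estimates is essentially backwards, and as stated the argument does not close.

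First, after lifting, the structure is not a bilinear pairing of two Fourier transforms on the cone. The paper lifts $P$ to $P' = \{\lambda(x,\|x\|,1):x\in P,\ \lambda\in\mathbb F^*\}$, which lies \emph{on} the cone $C_{d+2}$, and lifts $S$ with weight $w$ to a function $w'1_{S'}$ supported on $S'=\{t(-2a,1,\|a\|-r)\}$, which is \emph{not} on the cone. The error term becomes (up to constants) $\sum_{\mathbf x\in P'} \widehat{w'1_{S'}}(\mathbf x)$, a single Fourier transform evaluated on a subset of the cone. Plain Cauchy--Schwarz over $\mathbf x\in P'$ then gives the factor $|P'|^{1/2}\sim |\mathbb F|^{1/2}|P|^{1/2}$ directly; no restriction theorem is applied to the point side at all. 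Your route via $\|\widehat{1_P}\|_{L^2(d\sigma)}\lesssim |P|^{1/r'}$ produces the wrong exponent (since $1/r'>1/2$, one has $|P|^{1/r'}\ge |P|^{1/2}$, and neither the bound $|P|\le |\mathbb F|^d$ nor interpolation with the trivial $L^1$ estimate recovers $|P|^{1/2}$ here).

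Second, and more seriously, the sphere side cannot be handled by a ``trivial/Plancherel bound'' if you want the $\ell^2$ norm of $w$. Applying Theorem~\ref{mainthm} as a black box to $w'1_{S'}$ is exactly what Remark~\ref{batky} does, and it yields the $\ell^{(2d+8)/(d+6)}$ norm of $w$ with a worse power of $|\mathbb F|$, not the statement of Theorem~\ref{incidence-theorem}. The paper instead \emph{opens up} the $L^2$ restriction computation: it writes $\sum_{\mathbf x\in C_{d+2}}|\widehat{w'1_{S'}}(\mathbf x)|^2$ as $|\mathbb F|^{d+2}\sum_{\mathbf m,\mathbf m'\in S'} w'(\mathbf m)\overline{w'(\mathbf m')}\,C_{d+2}^\vee(\mathbf m-\mathbf m')$, inserts the explicit values of $C_{d+2}^\vee$ from Proposition~\ref{lem4.1}, and uses the \emph{sign} of $\mathcal G_1^d$ (Lemma~\ref{Gsign}) to discard a negative term. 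This yields (Proposition~\ref{Goodsize}) a bound of the form $(|\mathbb F|^{d+2}+|\mathbb F|^{(d+4)/2}|S|)\sum_s|w(s)|^2$ in case~(1), and the hypothesis $|S|\le |\mathbb F|^{d/2}$ is precisely what makes the second term no larger than the first. That is where the $\ell^2$ norm of $w$ and the thresholds on $|S|$ genuinely come from --- it is the analogue of Lemma~\ref{WL2} for the weighted function $w'1_{S'}$, not a consequence of the packaged restriction inequality.
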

To see the sharpness of Theorem \ref{incidence-theorem}, we assume that $w(s)=1$ for all $s\in S$. So $I_w(P, S)$ will count  the usual point-sphere incidences. When the size of $S$ is small, say less than $|\mathbb{F}|^{\frac{d}{2}}, ~ |\mathbb{F}|^{\frac{d-2}{2}}, ~ |\mathbb{F}|^{\frac{d-1}{2}}$ in corresponding cases, then we have $\left\vert I(P, S)-|\mathbb{F}|^{-1}|P||S|\right\vert\lesssim |\mathbb{F}|^{\frac{d-1}{2}}\sqrt{|P||S|}$. This bound is sharp in the sense that there exist sets $P$ and $S$ in $\mathbb{F}^d$ with $|P||S|\sim |\mathbb{F}|^{d+1}$ and $|S|$  arbitrarily small such that $I(P, S)=0$. We will provide concrete constructions in Subsection \ref{chat}.

When $w(s)=1$ for all $s\in S$, Theorem \ref{incidence-theorem} improves significantly a result of 
Cilleruelo, Iosevich, Lund, Roche-Newton, and Rudnev \cite{CILRR17}, and independently by Pham, Phuong and Vinh \cite{PPV}, which says that $\left\vert I(P, S)-|\mathbb{F}|^{-1}|P||S|\right\vert\lesssim |\mathbb{F}|^{\frac{d}{2}}\sqrt{|P||S|}$. It is worth noting that it is not possible to prove Theorem \ref{incidence-theorem} by methods in \cite{CILRR17, PPV}.

In Theorem \ref{incidence-theorem} (1), if the incidence result under the condition $|S|\le |\mathbb F|^{d/2}$ can be  extended to the range $|S|\le |\mathbb F|^{(d+2)/2}$, then we will see in Subsection \ref{RemarkE} that one can prove the Erd\H{o}s-Falconer conjecture in the case when $d=4\ell+2$ for $\ell\in \mathbb N,$  and $-1$ is not a square number in $\mathbb F$. Hence,  in light of our results, this approach might be a feasible way to think of the Erd\H{o}s-Falconer distance conjecture over finite fields. 

We refer the interested reader to \cite{kollar, R, SZ17} for recent incidence bounds between points and lines, points and planes for small sets in the finite field setting. 



We will see in the next two sections that our proof of Theorem \ref{mainthm} relies heavily on discrete Fourier analysis and exponential sums. Thus, one might wonder whether or not it is possible to prove $L^2$ restriction estimates for cones by using incidence bounds like in the paraboloid setting. However, that question is outside the realm of methods of this paper.
\section{Fourier decay estimates on cones}

In this section, we recall properties of Gauss sums in \cite{LN97} and  an explicit form of the Fourier transform of the surface measure on the cone $C_n$ in \cite{KY}. We start with reviewing Gauss sums. 
Let $\eta:\mathbb F^*\rightarrow  \mathbb S^1$ be the quadratic character of $\mathbb F^*$, i.e., a group homomorphism defined by $\eta(t)=1$ if $t$ is a square, and $-1$ otherwise. It is not hard to see that $\eta$ has the following orthogonality property: for any $a\in  \mathbb F^*,$
$$ \sum_{t\in \mathbb F^*} \eta(at) =0.$$
For each $a\in \mathbb F^*$,  the Gauss sum $\mathcal{G}_a$ is defined by
$$ \mathcal{G}_a=\sum_{t\in  \mathbb F^*} \eta(t) e(at).$$
It is well--known  that $|\mathcal{G}_a|=|\mathbb F|^{1/2}$ for all $a\in  \mathbb F^*.$ 
Furthermore, the explicit value of the Gauss sum $\mathcal{G}_1$  is as follows.
\begin{lemma}\cite[Theorem 5.15]{LN97}\label{ExplicitGauss}
Let $\mathbb F$ be a finite field of order $p^{\ell},i.e., |\mathbb F|=p^\ell$, where $p$ is an odd prime and $\ell \in {\mathbb N}.$
Then we have
$$\mathcal{G}_1=\left\{\begin{array}{ll}  {(-1)}^{\ell-1} |\mathbb F|^{\frac{1}{2}} \quad &\mbox{if} \quad p \equiv 1 \mod 4 \\
                  {(-1)}^{\ell-1} i^\ell |\mathbb F|^{\frac{1}{2}}  \quad &\mbox{if} \quad p\equiv 3 \mod 4.\end{array}\right.$$
\end{lemma}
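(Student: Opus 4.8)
The plan is to reduce the computation to the case of the prime field $\mathbb{F}_p$ (where the answer is the classical theorem of Gauss) by means of the Davenport--Hasse lifting relation. Write $|\mathbb{F}|=p^{\ell}$, let $\mathbb{F}_p\subseteq\mathbb{F}$ be the prime subfield, let $N\colon\mathbb{F}^*\to\mathbb{F}_p^*$ be the norm and $Tr\colon\mathbb{F}\to\mathbb{F}_p$ the trace, and let $e_p(s)=e^{2\pi i s/p}$ and $\eta_p$ be the canonical additive and the quadratic characters of $\mathbb{F}_p$. Two compatibilities are needed. First, $e=e_p\circ Tr$, which is exactly the definition of $e$ recalled in the introduction. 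Second, $\eta=\eta_p\circ N$: indeed $\eta_p\circ N$ is a character of $\mathbb{F}^*$, it is nontrivial because $N$ is surjective and $\eta_p$ is nontrivial, and it has order dividing $2$, so it must equal the unique such character $\eta$. (Concretely, $\eta_p(N(x))=N(x)^{(p-1)/2}=x^{(p^{\ell}-1)/2}=\eta(x)$ under the usual identification of $\pm1\bmod p$ with $\pm1\in\mathbb{C}$.) Hence $\mathcal{G}_1=\sum_{t\in\mathbb{F}^*}\eta(t)e(t)$ is precisely the Gauss sum over $\mathbb{F}=\mathbb{F}_{p^{\ell}}$ attached to the lifted characters $\eta_p\circ N$ and $e_p\circ Tr$.

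Next I would invoke the Davenport--Hasse relation (see \cite{LN97}): if $g=\sum_{t\in\mathbb{F}_p^*}\eta_p(t)e_p(t)$ is the quadratic Gauss sum over $\mathbb{F}_p$, then the Gauss sum over $\mathbb{F}_{p^{\ell}}$ attached to $\eta_p\circ N$ and $e_p\circ Tr$ equals $(-1)^{\ell-1}g^{\ell}$; thus $\mathcal{G}_1=(-1)^{\ell-1}g^{\ell}$, and everything reduces to evaluating $g$. The magnitude is the familiar orthogonality computation: expanding $|g|^2=\sum_{s,t\in\mathbb{F}_p^*}\eta_p(st^{-1})e_p(s-t)$ and summing the resulting geometric series gives $|g|^2=p$. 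Moreover $\overline{g}=\sum_t\eta_p(-t)e_p(t)=\eta_p(-1)g$, and $\eta_p(-1)=(-1)^{(p-1)/2}$ is $+1$ for $p\equiv1\bmod4$ and $-1$ for $p\equiv3\bmod4$; hence $g$ is real in the first case and purely imaginary in the second, so $g\in\{\pm p^{1/2}\}$ respectively $g\in\{\pm i\,p^{1/2}\}$. Substituting either alternative into $\mathcal{G}_1=(-1)^{\ell-1}g^{\ell}$ already reproduces the stated formula \emph{up to an overall sign}, so the only thing left is to determine the sign of $g$.

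This last point---Gauss's sign theorem, that $g=+p^{1/2}$ when $p\equiv1\bmod4$ and $g=+i\,p^{1/2}$ when $p\equiv3\bmod4$---is the real obstacle, since the magnitude above is cheap while the sign genuinely is not. I would prove it by Schur's finite Fourier-analytic argument. Because $x^2=t$ has $1+\eta_p(t)$ solutions in $\mathbb{F}_p$ and $\sum_{t\in\mathbb{F}_p}e_p(t)=0$, one gets $g=\sum_{j=0}^{p-1}e^{2\pi i j^2/p}=\operatorname{tr}F$, where $F=(e^{2\pi i jk/p})_{0\le j,k\le p-1}$ is the unnormalized DFT matrix. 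A one-line computation gives $F^2=pP$ with $P$ the permutation matrix of $j\mapsto -j\bmod p$, so $F^4=p^2I$ and every eigenvalue of $F$ lies in $\{\pm p^{1/2},\pm i\,p^{1/2}\}$; writing $a,b,c,d$ for their multiplicities, the identities $a+b+c+d=p$ and $\operatorname{tr}(F^2)=p\operatorname{tr}P=p$, the reality/imaginarity of $\operatorname{tr}F$ together with $|\operatorname{tr}F|^2=p$, and the Vandermonde evaluation $\det F=\prod_{0\le j<k\le p-1}(\omega^k-\omega^j)=(p^{1/2})^{a}(-p^{1/2})^{b}(i\,p^{1/2})^{c}(-i\,p^{1/2})^{d}$ with $\omega=e^{2\pi i/p}$, determine $a,b,c,d$ completely and hence $\operatorname{tr}F=(a-b)p^{1/2}+(c-d)i\,p^{1/2}$. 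Carrying out this bookkeeping yields precisely Gauss's value of $g$, and then $\mathcal{G}_1=(-1)^{\ell-1}g^{\ell}$ gives the lemma. (Alternatively one may simply quote Gauss's theorem from the literature, or use Dirichlet's Poisson-summation proof of it; in any case the reduction to $\mathbb{F}_p$ and the magnitude estimate are routine, and the sign theorem is where the work is.)
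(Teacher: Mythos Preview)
The paper does not prove this lemma at all: it is quoted verbatim from Lidl--Niederreiter \cite[Theorem~5.15]{LN97} and used as a black box. So there is no ``paper's own proof'' to compare against.

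Your argument is correct and is essentially the standard textbook route (indeed, this is roughly how the proof in \cite{LN97} proceeds): reduce to the prime field via the Davenport--Hasse lifting relation $\mathcal{G}_1=(-1)^{\ell-1}g^{\ell}$, then invoke Gauss's sign theorem for $g=\sum_{j=0}^{p-1}e^{2\pi i j^2/p}$. The compatibility $\eta=\eta_p\circ N$ and the Davenport--Hasse step are handled correctly, and Schur's DFT-matrix determination of the sign is a legitimate way to finish. The only caveat is that the Schur argument, as you sketch it, requires a bit of care in extracting the sign from the Vandermonde determinant $\prod_{j<k}(\omega^k-\omega^j)$; you should be explicit that each factor $\omega^k-\omega^j=2i\,\omega^{(j+k)/2}\sin\!\bigl(\pi(k-j)/p\bigr)$ has a positive sine, so the argument of the product is computable and this is what pins down $(a,b,c,d)$ and hence the sign of $\operatorname{tr}F$. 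With that detail filled in, the proof is complete.
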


Thanks to the above explicit value of the Gauss sum 
$\mathcal{G}_1,$  we are able to deduce the following result which will provide crucial clues in proving the restriction conjecture for cones in dimension four. 
\begin{lemma}\label{Gsign} Let $\mathbb F$ be a finite field with $|\mathbb F|\equiv 3 \mod 4.$  If $n\equiv 0 \mod 4,$ then
$$ \mathcal{G}^{n-2}_1= -|\mathbb F|^{\frac{n-2}{2}}.$$ 
\end{lemma}
\begin{proof} Let $|\mathbb F|=p^\ell$ for some prime $p$ and a positive integer $\ell.$ Since $|\mathbb F|\equiv 3 \mod{4},$ we have
$p\equiv 3 \mod 4$ and $\ell$ is odd. Therefore, Lemma \ref{ExplicitGauss} implies that 
$$ \mathcal{G}_1 ={(-1)}^{\ell-1} i^\ell |\mathbb F|^{\frac{1}{2}}.$$
Since $n\equiv 0 \mod{4},$  we obtain that $\mathcal{G}_1^{n-2}=-|\mathbb F|^{\frac{n-2}{2}},$ as required.
\end{proof}

We will utilize the following properties of the Gauss sum which can be  proved by using a change of variables and properties of the quadratic character $\eta.$
For $a,b\ne 0$, we have
\begin{equation*}\label{Gauss}\sum_{s\in  \mathbb F^*} \eta(as) e(bs) =\sum_{s\in  \mathbb F^*} \eta(as^{-1}) e(bs)= \eta(ab) \mathcal{G}_1. 
\end{equation*}


Since $\eta(t)=1$ for a square number $t\in \mathbb F^*,$ and $-1$ otherwise,  we have for each $a \ne 0$ 
\begin{equation}\label{square} \sum_{s \in \mathbb F} e(a s^2) = \eta(a) \mathcal{G}_1.\end{equation}
For a proof of this equality, we refer readers to  \cite[Theorem $5.30$]{LN97}. 
 As a consequence of the equality \eqref{square}, it is not hard to see that  for $a\ne 0$ and $b\in \mathbb F,$
\begin{equation*}
\sum_{s \in \mathbb F} e(a s^2+bs) =
\eta(a) \mathcal{G}_1 e\left(\frac{b^2}{-4a}\right).  
    \end{equation*}
In fact, we have the following formula which  will be needed in our computations.
\begin{lemma}\label{complete}
For $\beta \in  \mathbb F^k$ and $s\in \mathbb F^*$, we have
$$ \sum_{\alpha \in \mathbb F^k} e( s \alpha \cdot \alpha + \beta \cdot \alpha ) 
=  \eta^k(s) \mathcal{G}_1^ke\left( \frac{\|\beta\|}{-4s}\right).$$
\end{lemma}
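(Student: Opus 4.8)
The plan is to reduce the $k$-dimensional sum to a product of one-dimensional Gauss-sum evaluations by separating coordinates. Writing $\alpha=(\alpha_1,\dots,\alpha_k)$ and $\beta=(\beta_1,\dots,\beta_k)$, the quadratic form $\alpha\cdot\alpha$ and the linear form $\beta\cdot\alpha$ both split as sums over coordinates, so
\[
\sum_{\alpha\in\mathbb F^k} e\bigl(s\,\alpha\cdot\alpha+\beta\cdot\alpha\bigr)
=\prod_{j=1}^{k}\;\sum_{\alpha_j\in\mathbb F} e\bigl(s\,\alpha_j^2+\beta_j\alpha_j\bigr).
\]
Thus it suffices to handle each factor separately and then multiply.

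For a single coordinate, I would invoke the completing-the-square identity already recorded in the excerpt just before the lemma, namely $\sum_{\alpha_j\in\mathbb F} e(s\alpha_j^2+\beta_j\alpha_j)=\eta(s)\,\mathcal G_1\, e\!\left(\frac{\beta_j^2}{-4s}\right)$, which is itself a consequence of \eqref{square} and the fact that $s\in\mathbb F^*$ together with the characteristic being odd makes $-4s$ invertible. (If one prefers a self-contained derivation: substitute $\alpha_j\mapsto \alpha_j-\frac{\beta_j}{2s}$, which is a bijection of $\mathbb F$, so that $s\alpha_j^2+\beta_j\alpha_j$ becomes $s\alpha_j^2-\frac{\beta_j^2}{4s}$, pull the constant phase $e(-\beta_j^2/4s)$ out of the sum, and apply \eqref{square} to the remaining $\sum_{\alpha_j} e(s\alpha_j^2)=\eta(s)\mathcal G_1$.) Taking the product over $j=1,\dots,k$ then gives $\eta(s)^k\mathcal G_1^k \prod_j e(\beta_j^2/(-4s))$, and since the additive character $e$ turns sums into products and $\sum_j \beta_j^2=\|\beta\|$, the phases combine into $e\!\left(\frac{\|\beta\|}{-4s}\right)$, which is exactly the claimed formula.

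There is no real obstacle here: the argument is a routine separation of variables plus a one-variable completion of the square, and every ingredient ($\eta(s)$ well-defined for $s\neq0$, $-4s$ invertible, \eqref{square}) is available under the standing hypothesis that $\operatorname{char}\mathbb F>2$. The only point deserving a line of care is that the translation $\alpha_j\mapsto\alpha_j-\beta_j/(2s)$ uses both $2$ and $s$ being units in $\mathbb F$; once that is noted, the computation is immediate.
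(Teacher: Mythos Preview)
Your proposal is correct and follows essentially the same approach as the paper: separate variables to write the sum as $\prod_{j=1}^k \sum_{\alpha_j} e(s\alpha_j^2+\beta_j\alpha_j)$, complete the square in each factor via the translation $\alpha_j\mapsto \alpha_j-\beta_j/(2s)$, and apply \eqref{square}. The only difference is cosmetic---you explicitly combine the phases at the end, while the paper leaves that step implicit.
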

\begin{proof}
We have
$$\sum_{\alpha \in  \mathbb F^k} e( s \alpha \cdot \alpha + \beta \cdot \alpha ) 
              =\prod_{j=1}^{k} \sum_{\alpha_j\in \mathbb F} e( s\alpha_j^2 + \beta_j\alpha_j).$$
Therefore, the lemma follows by completing the square in the $\alpha_j$-variables, applying a change of variables, $ \alpha_j+\frac{\beta_j}{2s} \to \alpha_j$,
and using the equality in (\ref{square}). 
\end{proof}
We now recall some notations which are useful in stating the explicit form of $(1_{C_n})^\vee$, the inverse Fourier transform of the indicator function on the cone $C_n.$ For the simplicity, we will write   $C_n^\vee$ for $(1_{C_n})^\vee.$ 
For each $x=(x_1,\ldots, x_n)\in \mathbb F^n,$ we define
$$ \Gamma(x):=x_1^2+x_2^2+\cdots+x_{n-2}^2-4x_{n-1}x_n.$$
We also define 
$$ C_n^*:=\{x\in \mathbb F^n: \Gamma(x)=0\}.$$
The set $C_n^*$ can be considered as a dual variety of the cone $C_n$ in $\mathbb F_*^n.$ Recall that  $\delta_{\bf 0}(x)=1$ if $x=\bf 0$, and $0$ otherwise. With the  notation above in hand,  the explicit expression for  $ C_n^\vee(x):= |\mathbb F|^{-n} \sum_{\xi\in C_n} e(x\cdot \xi)$ was given in \cite{KY}. For the reader's convenience, we provide a proof here. 
\begin{proposition}\label{lem4.1} The following statements hold:
\begin{enumerate}
\item
If the dimension, $n\ge 4,$ is even,   then
$$ C_n^\vee(x)= \left\{\begin{array}{ll} \frac {\delta_{{\bf 0}}(x)}{|\mathbb F|}+\frac{(|\mathbb F|-1)\mathcal{G}_1^{n-2}}{|\mathbb F|^n} ~~ &\mbox{if} ~~x\in C_n^* \\
\frac{-\mathcal{G}_1^{n-2}}{|\mathbb F|^n}  ~~ &\mbox{if}~~ x\notin C_n^*. \end{array} \right.$$
\item
 If the dimension, $n\ge 3,$ is odd,   then
$$ C_n^\vee(x)= \left\{\begin{array}{ll} \frac {\delta_{{\bf 0}}(x)}{|\mathbb F|}~~ &\mbox{if} ~~ x\in C_n^*\\
 \frac{\mathcal{G}_1^{n-1}}{|\mathbb F|^n}~ \eta(-\Gamma(x)) ~~ &\mbox{if}~~ x\notin C_n^*. \end{array} \right.$$
\end{enumerate}
\end{proposition}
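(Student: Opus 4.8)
The plan is to compute $C_n^\vee(x) = |\mathbb F|^{-n}\sum_{\xi \in C_n} e(x\cdot\xi)$ directly by expanding the condition $\xi \in C_n$ using an additive character to detect the quadratic constraint. Write $\xi = (\xi', \xi_{n-1}, \xi_n)$ with $\xi' = (\xi_1,\dots,\xi_{n-2})$, so that $\xi \in C_n$ means $\|\xi'\| - \xi_{n-1}\xi_n = 0$ (using the notation $\|\xi'\| = \xi_1^2 + \cdots + \xi_{n-2}^2$). First I would insert the indicator
$$ 1_{C_n}(\xi) = \frac{1}{|\mathbb F|} \sum_{s\in\mathbb F} e\bigl(s(\|\xi'\| - \xi_{n-1}\xi_n)\bigr), $$
which turns the sum into
$$ C_n^\vee(x) = \frac{1}{|\mathbb F|^{n+1}} \sum_{s\in\mathbb F}\ \sum_{\xi'\in\mathbb F^{n-2}} e\bigl(s\|\xi'\| + x'\cdot\xi'\bigr)\ \sum_{\xi_{n-1},\xi_n\in\mathbb F} e\bigl(-s\xi_{n-1}\xi_n + x_{n-1}\xi_{n-1} + x_n\xi_n\bigr). $$
The $s=0$ term contributes $|\mathbb F|^{-n-1}$ times $\sum_{\xi'} e(x'\cdot\xi')\sum_{\xi_{n-1},\xi_n} e(x_{n-1}\xi_{n-1}+x_n\xi_n) = |\mathbb F|^{n}\delta_{\mathbf 0}(x)$, giving the $\delta_{\mathbf 0}(x)/|\mathbb F|$ term appearing in both cases. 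The remaining work is the sum over $s \in \mathbb F^*$.

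For fixed $s\in\mathbb F^*$, the inner sums factor. The $\xi'$-sum is a Gaussian sum in $n-2$ variables: by Lemma \ref{complete} it equals $\eta^{n-2}(s)\,\mathcal{G}_1^{n-2}\, e\bigl(\|x'\|/(-4s)\bigr)$. The $(\xi_{n-1},\xi_n)$-sum is the exponential sum of a hyperbolic (rank-two) quadratic form; summing first over $\xi_n$ forces $\xi_{n-1} = x_n/s$ (picking up a factor $|\mathbb F|$ from the character sum $\sum_{\xi_n} e(\xi_n(x_n - s\xi_{n-1}))$), and then the $\xi_{n-1}$-sum collapses to $|\mathbb F|\, e(x_{n-1}x_n/s)$. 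Combining, the $s$-term is
$$ \frac{1}{|\mathbb F|^{n-1}}\,\eta^{n-2}(s)\,\mathcal{G}_1^{n-2}\, e\!\left(\frac{1}{s}\Bigl(\frac{\|x'\|}{-4} + x_{n-1}x_n\Bigr)\right) = \frac{\eta^{n-2}(s)\,\mathcal{G}_1^{n-2}}{|\mathbb F|^{n-1}}\, e\!\left(\frac{-\Gamma(x)}{4s}\right), $$
recalling $\Gamma(x) = \|x'\| - 4x_{n-1}x_n$. So $C_n^\vee(x) - \delta_{\mathbf 0}(x)/|\mathbb F| = |\mathbb F|^{-n}\,\mathcal{G}_1^{n-2}\sum_{s\in\mathbb F^*}\eta^{n-2}(s)\,e\bigl(-\Gamma(x)/(4s)\bigr)$, absorbing one factor of $|\mathbb F|$ into $|\mathbb F|^{-n}$.

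It remains to evaluate $T := \sum_{s\in\mathbb F^*}\eta^{n-2}(s)\,e(-\Gamma(x)/(4s))$, splitting on the parity of $n$. When $n$ is even, $\eta^{n-2}\equiv 1$: if $x\in C_n^*$ then $\Gamma(x)=0$ and $T = |\mathbb F|-1$; if $x\notin C_n^*$ then $-\Gamma(x)/4 \neq 0$ and by the substitution $s\mapsto s^{-1}$ (or directly $\sum_{s\in\mathbb F^*}e(as) = -1$ for $a\neq 0$) we get $T = -1$. Multiplying by $|\mathbb F|^{-n}\mathcal{G}_1^{n-2}$ and adding the delta term yields exactly case (1). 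When $n$ is odd, $\eta^{n-2} = \eta$: if $x\in C_n^*$ then $T = \sum_{s\in\mathbb F^*}\eta(s) = 0$, leaving only $\delta_{\mathbf 0}(x)/|\mathbb F|$; if $x\notin C_n^*$, writing $a = -\Gamma(x)/4 \neq 0$ and applying the Gauss-sum identity $\sum_{s\in\mathbb F^*}\eta(s)e(as) = \eta(a)\mathcal{G}_1$ (together with $\eta(s) = \eta(s^{-1})$) gives $T = \eta(-\Gamma(x)/4)\,\mathcal{G}_1 = \eta(-\Gamma(x))\,\mathcal{G}_1$, since $\eta(4) = 1$ and $\eta$ is multiplicative. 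Then $|\mathbb F|^{-n}\mathcal{G}_1^{n-2}\cdot\eta(-\Gamma(x))\mathcal{G}_1 = |\mathbb F|^{-n}\mathcal{G}_1^{n-1}\eta(-\Gamma(x))$, which is case (2). The only mild subtlety — the part most prone to a slip — is the careful bookkeeping of the powers of $|\mathbb F|$ coming from the two factors of $|\mathbb F|$ in the $(\xi_{n-1},\xi_n)$-sum against the $|\mathbb F|^{-n-1}$ prefactor, and ensuring the characteristic is odd so that $4$ is invertible and the completion of the square in Lemma \ref{complete} is valid; there is no genuine obstacle beyond this.
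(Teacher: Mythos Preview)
Your proof is correct and follows essentially the same route as the paper: insert the character-sum indicator for the cone constraint, isolate the $s=0$ delta term, evaluate the Gaussian $\xi'$-sum via Lemma~\ref{complete}, collapse the $(\xi_{n-1},\xi_n)$-sum by orthogonality, and then split on the parity of $n$ to evaluate the remaining sum over $s\in\mathbb F^*$. The only quibble is a bookkeeping slip in your displayed $s$-term --- the hyperbolic $(\xi_{n-1},\xi_n)$-sum contributes a single factor of $|\mathbb F|$ (not two), so the prefactor there should read $|\mathbb F|^{-n}$ rather than $|\mathbb F|^{-(n-1)}$ --- but you land on the correct formula in the very next line.
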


\begin{proof}   By the orthogonality of $e(\cdot),$ we have
\begin{align*} C_n^\vee(x)&=|\mathbb F|^{-n} \sum_{\xi\in C_n} e(x\cdot \xi)\\
                        &=\frac{\delta_{{\bf 0}}(x)}{|\mathbb F|} + \frac{1}{|\mathbb F|^{n+1}}\sum_{s\ne 0}  \sum_{\xi\in \mathbb F_*^n}e(s(\xi_1^2+\dots+\xi_{n-2}^2-\xi_{n-1}\xi_n)) e(x\cdot \xi).
\end{align*}
 Using the formula \eqref{square}, the above value becomes
$$  \frac{\delta_{{\bf 0}}(x)}{|\mathbb F|}+ \frac{\mathcal{G}_1^{n-2}}{|\mathbb F|^{n+1}} \sum_{s\ne 0} \eta^{n-2}(s)e\left( \frac{x_1^2+\cdots + x_{n-2}^2}{-4s}\right)  I (x_{n-1}, x_n),$$
where we define
$$ I(x_{n-1}, x_n):= \sum_{\xi_{n-1}\in  \mathbb F} e(x_{n-1}\xi_{n-1}) \sum_{\xi_n\in  \mathbb F} e( (-s\xi_{n-1}+x_n) \xi_n ).$$
By using the orthogonality of $e(\cdot),$  we compute the sum over $\xi_n\in  \mathbb F.$   Then  we   obtain  the following equation:
$$ C_n^\vee(x)=\frac{\delta_{{\bf 0}}(x)}{|\mathbb F|}+ \frac{\mathcal{G}_1^{n-2}}{|\mathbb F|^{n}} \sum_{s\ne 0} \eta^{n-2}(s) e\left(\frac {x_1^2+\cdots+x_{n-2}^2-4 x_{n-1}x_n}{-4s}\right).$$

Since $\eta^{n-2} =1$ for  even $n\geq 4$ ,  the first statement of Proposition \ref{lem4.1} follows. To prove the second part of Proposition \ref{lem4.1},
we first note that since the dimension $n$ is odd,  $\eta^{n-2}(s)=\eta(s)=\eta(s^{-1}) $ for $s\neq 0.$ Therefore, when $x_1^2+\cdots+x_{n-2}^2-4x_{n-1}x_n=0$, the statement follows immediately from the orthogonality  of $\eta.$  On the other hand, when $x_1^2+\cdots+x_{n-2}^2-4x_{n-1}x_n\neq 0,$  the statement follows  from a change of variables, the definition of the Gauss sum, and properties of the quadratic character $\eta.$
\end{proof}

Applying  the above proposition  with  the fact that $|\mathcal{G}_1|=\sqrt{|\mathbb F|}, $  one can easily find  the absolute value of the inverse Fourier transform on the cone $C_n.$  
More precisely, we have the following result.
\begin{corollary}\label{cor4.3} Let $x$ be a non-zero vector in $\mathbb F^n.$ 
\begin{enumerate}
\item
If the dimension, $n\ge 4,$ is even,   then
$$ |C_n^\vee(x)|\sim \left\{\begin{array}{ll} |\mathbb F|^{-\frac{n}{2}} ~~ &\mbox{if} ~~\Gamma(x)=0 \\
  |\mathbb F|^{- \frac{(n+2)}{2}}  ~~ &\mbox{if}~~ \Gamma(x)\ne 0. \end{array} \right.$$
\item
 If the dimension, $n\ge 3,$ is odd,   then
$$ |C_n^\vee(x)|= \left\{\begin{array}{ll} 0~~ &\mbox{if} ~~ \Gamma(x)=0\\
|\mathbb F|^{-\frac{(n+1)}{2}}~~ &\mbox{if}~~ \Gamma(x)\ne 0. \end{array} \right.$$
\end{enumerate}
\end{corollary}
It follows from Corollary \ref{cor4.3} that in even dimensions $|C_n^\vee(x)|$ is bounded by $|\mathbb{F}|^{-{n}/{2}}$ for any non zero $x$. However, when $n\equiv 0\mod 4$ and $|\mathbb{F}|\equiv 3\mod 4$, by taking the advantage of the sign of the Gauss sums in Lemma \ref{Gsign}, the upper bound of $C_n^\vee(x)$ can be much smaller. This is a key observation in the proof of Theorem \ref{mainthm}. An explicit form can be provided as follows. 

\begin{lemma}\label{Luck} If $|\mathbb F|\equiv 3 \mod{4}$ and $n\equiv 0 \mod{4},$ then
$$ C_n^\vee(x)= \left\{\begin{array}{ll} \frac {\delta_{{\bf 0}}(x)}{|\mathbb F|}-\frac{(|\mathbb F|-1)}{|\mathbb F|^{\frac{n+2}{2}}} ~~ &\mbox{if} ~~x\in C_n^* \\
\frac{1}{|\mathbb F|^{\frac{n+2}{2}}}  ~~ &\mbox{if}~~ x\notin C_n^*. \end{array} \right.$$
\end{lemma}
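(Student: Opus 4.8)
The plan is to invoke Proposition \ref{lem4.1}(1), which already gives the explicit value of $C_n^\vee(x)$ for all even $n\ge 4$ in terms of the Gauss sum power $\mathcal{G}_1^{n-2}$, and then substitute the special value of that Gauss sum power supplied by Lemma \ref{Gsign}. Concretely, since $|\mathbb F|\equiv 3\mod 4$ and $n\equiv 0\mod 4$, both hypotheses of Lemma \ref{Gsign} are met, so $\mathcal{G}_1^{n-2}=-|\mathbb F|^{(n-2)/2}$. Plugging this into the two cases of Proposition \ref{lem4.1}(1) is then a one-line replacement.

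First I would recall that by Proposition \ref{lem4.1}(1), for $x\in C_n^*$ we have $C_n^\vee(x)=\frac{\delta_{\bf 0}(x)}{|\mathbb F|}+\frac{(|\mathbb F|-1)\mathcal{G}_1^{n-2}}{|\mathbb F|^n}$, and for $x\notin C_n^*$ we have $C_n^\vee(x)=\frac{-\mathcal{G}_1^{n-2}}{|\mathbb F|^n}$. Next I would substitute $\mathcal{G}_1^{n-2}=-|\mathbb F|^{(n-2)/2}$ from Lemma \ref{Gsign}. In the first case this yields $\frac{\delta_{\bf 0}(x)}{|\mathbb F|}-\frac{(|\mathbb F|-1)|\mathbb F|^{(n-2)/2}}{|\mathbb F|^n}=\frac{\delta_{\bf 0}(x)}{|\mathbb F|}-\frac{|\mathbb F|-1}{|\mathbb F|^{n-(n-2)/2}}=\frac{\delta_{\bf 0}(x)}{|\mathbb F|}-\frac{|\mathbb F|-1}{|\mathbb F|^{(n+2)/2}}$, using $n-\tfrac{n-2}{2}=\tfrac{2n-n+2}{2}=\tfrac{n+2}{2}$. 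In the second case we get $\frac{-(-|\mathbb F|^{(n-2)/2})}{|\mathbb F|^n}=\frac{|\mathbb F|^{(n-2)/2}}{|\mathbb F|^n}=\frac{1}{|\mathbb F|^{(n+2)/2}}$ by the same exponent arithmetic. This matches the claimed formula exactly.

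There is essentially no obstacle here: the lemma is a direct corollary of two results already established in the excerpt, and the only content is the exponent bookkeeping $n-\frac{n-2}{2}=\frac{n+2}{2}$ together with tracking the sign introduced by $\mathcal{G}_1^{n-2}=-|\mathbb F|^{(n-2)/2}$. If I wanted to be careful about one subtle point, it would be to double-check that the hypotheses of Lemma \ref{Gsign} are literally the hypotheses assumed here — they are ($|\mathbb F|\equiv 3\mod 4$ forces $p\equiv 3\mod 4$ with $\ell$ odd, and $n\equiv 0\mod 4$), so the substitution is legitimate. Thus the proof is just: apply Proposition \ref{lem4.1}(1), then apply Lemma \ref{Gsign}, then simplify.
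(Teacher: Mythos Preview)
Your proposal is correct and is exactly the intended argument: the paper does not even write out a proof of Lemma~\ref{Luck}, presenting it as an immediate consequence of Proposition~\ref{lem4.1}(1) together with the Gauss sum value $\mathcal{G}_1^{n-2}=-|\mathbb F|^{(n-2)/2}$ from Lemma~\ref{Gsign}, which is precisely what you do.
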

Let $d\sigma$ be the normalized surface measure on the cone $C_n.$  We also need a version of Corollary \ref{cor4.3} for this measure. 

\begin{corollary}\label{cor4.4} Let $d\sigma$ be the normalized surface measure on the cone $C_n.$ 
If $x$ is a non-zero vector in $\mathbb F^n$, then we have following:
\begin{enumerate}
\item
If the dimension, $n\ge 4,$ is even,   then
$$ |d\sigma^\vee(x)|\sim \left\{\begin{array}{ll} |\mathbb F|^{-\frac{(n-2)}{2}} ~~ &\mbox{if} ~~\Gamma(x)=0 \\
|\mathbb F|^{- \frac{n}{2}}  ~~ &\mbox{if}~~ \Gamma(x)\ne 0. \end{array} \right.$$
\item
 If the dimension, $n\ge 3,$ is odd,   then
$$ |d\sigma^\vee(x)|= \left\{\begin{array}{ll} 0~~ &\mbox{if} ~~ \Gamma(x)=0\\
  |\mathbb F|^{-\frac{(n-1)}{2}}~~ &\mbox{if}~~ \Gamma(x)\ne 0. \end{array} \right.$$
\end{enumerate}
\end{corollary}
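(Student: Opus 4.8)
The plan is to deduce Corollary \ref{cor4.4} directly from Corollary \ref{cor4.3}, since $(d\sigma)^\vee$ and $C_n^\vee$ differ only by the scalar $|\mathbb{F}|^n/|C_n|$. Indeed, as $d\sigma$ puts mass $|C_n|^{-1}$ on each point of $C_n$, one has
$$(d\sigma)^\vee(x) = \frac{1}{|C_n|}\sum_{\xi\in C_n}e(x\cdot\xi) = \frac{|\mathbb{F}|^n}{|C_n|}\,C_n^\vee(x),$$
so the whole statement reduces to computing $|C_n|$ up to a constant factor (and exactly, in the odd case).

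First I would compute $|C_n|$ via $|C_n| = |\mathbb{F}|^n\,C_n^\vee({\bf 0})$, which is immediate from the definition $C_n^\vee(x) = |\mathbb{F}|^{-n}\sum_{\xi\in C_n}e(x\cdot\xi)$. Since $\Gamma({\bf 0}) = 0$, the origin lies in $C_n^*$, so Proposition \ref{lem4.1} applies in its first branch. When $n\ge 3$ is odd it gives $C_n^\vee({\bf 0}) = |\mathbb{F}|^{-1}$ exactly, hence $|C_n| = |\mathbb{F}|^{n-1}$. When $n\ge 4$ is even it gives $C_n^\vee({\bf 0}) = |\mathbb{F}|^{-1} + (|\mathbb{F}|-1)\mathcal{G}_1^{n-2}|\mathbb{F}|^{-n}$, hence $|C_n| = |\mathbb{F}|^{n-1} + (|\mathbb{F}|-1)\mathcal{G}_1^{n-2}$. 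Here $\mathcal{G}_1^{n-2}$ is real with $|\mathcal{G}_1^{n-2}| = |\mathbb{F}|^{(n-2)/2}$, and since $(|\mathbb{F}|-1)|\mathbb{F}|^{(n-2)/2} < |\mathbb{F}|^{n-1}$ for every $n\ge 4$, we get $|C_n|\sim |\mathbb{F}|^{n-1}$.

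Combining the two displays, $(d\sigma)^\vee(x) = |\mathbb{F}|\,C_n^\vee(x)$ when $n$ is odd and $(d\sigma)^\vee(x)\sim |\mathbb{F}|\,C_n^\vee(x)$ when $n$ is even, so the claimed values follow term by term from Corollary \ref{cor4.3}: multiplying each case there by $|\mathbb{F}|$ turns $|\mathbb{F}|^{-n/2}$ into $|\mathbb{F}|^{-(n-2)/2}$, $|\mathbb{F}|^{-(n+2)/2}$ into $|\mathbb{F}|^{-n/2}$, $|\mathbb{F}|^{-(n+1)/2}$ into $|\mathbb{F}|^{-(n-1)/2}$, and $0$ into $0$. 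I do not expect a genuine obstacle; the only point needing care is verifying that the correction term $(|\mathbb{F}|-1)\mathcal{G}_1^{n-2}$ in the even-dimensional count does not absorb the main term $|\mathbb{F}|^{n-1}$, which amounts to the elementary inequality $|\mathbb{F}|^{n/2} < |\mathbb{F}|^{n-1}$, valid whenever $n>2$.
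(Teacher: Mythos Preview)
Your proposal is correct and follows essentially the same route as the paper: compute $|C_n|$ from $C_n^\vee(\mathbf{0})$ via Proposition~\ref{lem4.1}, conclude $|C_n|\sim|\mathbb{F}|^{n-1}$, and then use $d\sigma^\vee(x)=\frac{|\mathbb{F}|^n}{|C_n|}C_n^\vee(x)$ together with Corollary~\ref{cor4.3}. Your extra care in bounding the correction term $(|\mathbb{F}|-1)\mathcal{G}_1^{n-2}$ is exactly the point the paper handles by invoking $|\mathcal{G}_1|=\sqrt{|\mathbb{F}|}$.
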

\begin{proof} We first need to estimate the size of $C_n$ which can be calculated directly. Indeed, by the definition of $C_n^\vee$ and Proposition \ref{lem4.1}, it follows that 
$$ C_n^\vee(0,\ldots,0)= \frac{|C_n|}{|\mathbb F|^n} =\left\{ \begin{array}{ll} |\mathbb F|^{-1} + \frac{(|\mathbb F|-1)\mathcal{G}_1^{n-2}}{|\mathbb F|^n} \quad&\mbox{for even}~~n\ge 4\\                                       
 |\mathbb F|^{-1} \quad&\mbox{for odd}~~n\ge 3.
\end{array}\right.$$
Since $|\mathcal{G}_1|=\sqrt{|\mathbb F|}$,  we conclude 
$$|C_n|\sim |\mathbb F|^{n-1} \quad\mbox{for}~~n\ge 3.$$
Since $d\sigma^\vee(x)=\frac{|\mathbb F|^n}{|C_n|} C_n^\vee(x)$, the statement follows  immediately  from Corollary \ref{cor4.3}.
\end{proof}

\section{Necessary conditions (Proof of Lemma \ref{neceCone})}\label{Sec2}
Mockenhaupt and Tao \cite{MT04} introduced  necessary conditions for the boundedness of extension operators related to a general algebraic variety $V_n$ in $\mathbb F^n_*. $  For example, if $|V_n|\sim |\mathbb F|^{n-1}$ and $V_n$ contains a subspace  $H$ with $|H|=|\mathbb F|^k$,  then  the inequality,
\begin{equation}\label{neko1} r\ge \frac{p(n-k)}{(p-1)(n-1-k)},\end{equation}
is a necessary condition for the boundedness of $R_{V_n}^*(p\to r).$  Note that the RHS of (\ref{neko1}) is an increasing function in the dimension $k$. 

To understand more about this condition, we need to find a subspace of maximal size lying in the cone $C_n.$
To do this, we invoke the following well-known result (for a simple proof, see \cite[Lemma 2.1]{Vi12}).

\begin{lemma}\label{Vi} For an integer $n\ge 3,$ let 
$ S_{n-2}:=\{(x_1,\ldots, x_{n-2})\in \mathbb F^{n-2}: x_1^2+\cdots+x_{n-2}^2=0\}$ be a sphere with zero radius in 
$\mathbb F^{n-2}.$ 
Denote by $\eta$ the quadratic character of $\mathbb F^*.$
If $H$ is a subspace of maximal dimension contained in $S_{n-2}$, then the followings hold:
\begin{enumerate}
\item If $n-2$ is odd, then $|H|=|\mathbb F|^{\frac{n-3}{2}}.$
\item If $n-2$ is even and $(\eta(-1))^{\frac{n-2}{2}}=1$, then $|H|=|\mathbb F|^{\frac{n-2}{2}}.$
\item If $n-2$ is even and $(\eta(-1))^{\frac{n-2}{2}}=-1,$ then $|H|=|\mathbb F|^{\frac{n-4}{2}}.$
\end{enumerate}
\end{lemma}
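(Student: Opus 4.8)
The plan is to recognize the statement as a computation of the Witt index of the quadratic form $Q(x)=x_1^2+\cdots+x_{n-2}^2$. Write $q=|\mathbb F|$ and $m=n-2$; then $Q$ is non-degenerate on $\mathbb F^m$ (Gram matrix the identity), so its discriminant $\operatorname{disc}Q$, i.e.\ the Gram determinant modulo squares, equals $1$. Since $|H|=q^{\dim H}$, and because $\operatorname{char}\mathbb F>2$ a subspace $H\subseteq S_{n-2}$ is the same thing as a totally isotropic subspace for $Q$ (polarize $Q(x)=Q(y)=Q(x+y)=0$ to get $B(x,y)=0$, where $B$ is the associated symmetric bilinear form), the claim is exactly that the maximal dimension $\nu$ of such a subspace equals $(n-3)/2$, $(n-2)/2$, $(n-4)/2$ in cases (1), (2), (3). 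I would prove this by a matching upper and lower bound on $\nu$.

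For the upper bound, any totally isotropic $H$ satisfies $H\subseteq H^{\perp}$, and non-degeneracy gives $\dim H+\dim H^{\perp}=m$, so $2\dim H\le m$. This already yields $\nu\le (m-1)/2=(n-3)/2$ when $m$ is odd (case (1)) and $\nu\le m/2=(n-2)/2$ when $m$ is even (case (2)). For case (3) I would sharpen this: if $\dim H=m/2$ then $H=H^{\perp}$, so $Q$ has a totally isotropic subspace of half its dimension and is therefore isometric to an orthogonal sum of $m/2$ hyperbolic planes (extend a basis of $H$ to a hyperbolic basis). Such a form has $\operatorname{disc}=(-1)^{m/2}$, and comparing with $\operatorname{disc}Q=1$ forces $(-1)^{m/2}$ to be a square, i.e.\ $(\eta(-1))^{m/2}=1$, contradicting the hypothesis of (3). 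Hence $\nu\le m/2-1=(n-4)/2$ in that case.

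For the lower bound I would build a totally isotropic subspace of the required size by repeatedly splitting off hyperbolic planes. The one genuinely external input is the standard fact that a non-degenerate quadratic form in at least three variables over a finite field of odd characteristic is isotropic (Chevalley--Warning, or a direct count of the zeros of a diagonal ternary form). Given an isotropic vector $v$ in a non-degenerate space, choose $w$ with $B(v,w)\ne 0$, so $\mathrm{span}\{v,w\}$ is a hyperbolic plane, and iterate on its orthogonal complement, which is again non-degenerate of dimension two less. Iterating until only dimension $1$ (for $m$ odd) or $2$ (for $m$ even) remains writes $Q\cong(k\text{ hyperbolic planes})\perp Q_0$ with $Q_0$ anisotropic and diagonal; the isotropic lines of the $k$ hyperbolic summands span a totally isotropic subspace of dimension $k$. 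When $m$ is odd this gives $\nu\ge(m-1)/2$, matching the upper bound. When $m$ is even, $k=m/2-1$ and $Q_0=\langle a,b\rangle$; the binary form $\langle a,b\rangle$ is itself hyperbolic exactly when $-ab$ is a square, and the discriminant identity $(-1)^{k}ab=\operatorname{disc}Q=1$ converts this condition into $(\eta(-1))^{m/2}=1$. So in case (2) the form is fully split and $\nu\ge m/2$, while in case (3) this construction attains the value $m/2-1$ already shown to be an upper bound.

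I expect the even-dimensional bookkeeping to be the main obstacle: one must track the discriminant carefully through the hyperbolic peeling and correctly translate "the residual binary form is (an)isotropic" into the parity condition $(\eta(-1))^{m/2}=\pm 1$, using the classification of quadratic forms over $\mathbb F$ by dimension and discriminant. The remaining ingredients — the inequality $2\dim H\le m$, the existence of isotropic vectors in dimension $\ge 3$, and the hyperbolic-plane splitting — are routine. As a sanity check one can verify the small cases $m=1,2,3,4$ directly; for instance $m=2$ with $-1$ a non-square gives $\nu=0$, matching (3), and $m=4$ gives $\nu=2$ for every $\mathbb F$, matching (2).
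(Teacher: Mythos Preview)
Your argument is correct: recognizing $S_{n-2}$ as the null cone of the standard non-degenerate quadratic form $Q=x_1^2+\cdots+x_{n-2}^2$, identifying subspaces of $S_{n-2}$ with totally isotropic subspaces for $Q$, and then computing the Witt index by the $2\dim H\le m$ bound together with hyperbolic peeling and a discriminant comparison is exactly the standard route and all the bookkeeping you record is right (in particular, your discriminant identity $(-1)^k ab\equiv 1$ with $k=m/2-1$ does convert the isotropy condition $\eta(-ab)=1$ for the binary residue into $(\eta(-1))^{m/2}=1$).

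As for comparison with the paper: the paper does not give its own proof of this lemma. It quotes the statement as a well-known fact and refers the reader to \cite[Lemma~2.1]{Vi12} for a simple proof. So there is nothing in the paper to compare your argument against beyond noting that you have supplied a self-contained proof where the authors chose to cite one. Your write-up, modulo the standard external input (Chevalley--Warning or an elementary zero count guaranteeing isotropy in dimension $\ge 3$), would serve perfectly well as that ``simple proof''.
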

Let $H$ be a subspace of maximal size lying in $S_{n-2},$ the sphere with zero radius in $\mathbb F^{n-2}.$
Define $\Omega=H\times \mathbb F\times \{0\}.$ We observe that $|\Omega|=|H||\mathbb F|$ and $\Omega$ is a subspace lying in the cone $C_{n}.$   
Combining this observation and Lemma \ref{Vi} gives us the next corollary.
\begin{corollary} \label{nekocor} The following statements hold:
\begin{enumerate} \label{SubP}
\item If $n\ge 3$ is odd, then the cone $C_n$ contains a subspace $\Omega$ with $|\Omega|=|\mathbb F|^{\frac{n-1}{2}}.$
\item If $n\equiv 2 \mod 4,$  then the cone $C_n$ contains a subspace $\Omega$ with $|\Omega|=|\mathbb F|^{\frac{n}{2}}.$
\item If $n$ is even  and $-1\in  \mathbb F$ is a square number, then the cone $C_n$ contains a subspace $\Omega$ with $|\Omega|= |\mathbb F|^{\frac{n}{2}}.$
\item If $n\equiv 0 \mod 4$, and $-1\in  \mathbb F$ is not a square number, then the cone $C_n$ contains a subspace $\Omega$ with $|\Omega|= |\mathbb F|^{\frac{n-2}{2}}.$
\end{enumerate}
\end{corollary}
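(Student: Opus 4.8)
The proof follows the construction already sketched in the paragraph preceding the corollary, and the plan is simply to make it precise and then read off the cardinalities from Lemma \ref{Vi}. First I would fix a maximal-dimensional subspace $H$ of $\mathbb F^{n-2}$ contained in the zero-radius sphere $S_{n-2}=\{(x_1,\dots,x_{n-2}):x_1^2+\cdots+x_{n-2}^2=0\}$, whose existence and size are furnished by Lemma \ref{Vi}, and set
\[
\Omega := H\times\mathbb F\times\{0\}=\{(h_1,\dots,h_{n-2},t,0)\in\mathbb F^n:(h_1,\dots,h_{n-2})\in H,\ t\in\mathbb F\}.
\]
Since $H$ is a linear subspace of $\mathbb F^{n-2}$, the set $\Omega$ is a linear subspace of $\mathbb F^n$ with $|\Omega|=|H|\cdot|\mathbb F|$. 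To see that $\Omega\subseteq C_n$, take any point of $\Omega$: its last two coordinates multiply to $t\cdot 0=0$, while the sum of squares of the first $n-2$ coordinates vanishes because $(h_1,\dots,h_{n-2})\in S_{n-2}$; thus the defining equation $\xi_{n-1}\xi_n=\xi_1^2+\cdots+\xi_{n-2}^2$ of $C_n$ is satisfied. This containment is valid for every $n\ge 3$.

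It then remains to substitute the value of $|H|$ supplied by Lemma \ref{Vi} in each regime and to check that the three clauses of that lemma line up correctly with the four clauses of the corollary. If $n$ is odd, then $n-2$ is odd, Lemma \ref{Vi}(1) gives $|H|=|\mathbb F|^{(n-3)/2}$, and hence $|\Omega|=|\mathbb F|^{(n-1)/2}$, which is (1). If $n$ is even, then $n-2$ is even and the outcome is controlled by the sign $(\eta(-1))^{(n-2)/2}$. This sign equals $+1$ exactly when either $\eta(-1)=1$ (that is, $-1$ is a square, giving (3)) or $(n-2)/2$ is even, i.e.\ $n\equiv 2\bmod 4$ (giving (2)); in either subcase Lemma \ref{Vi}(2) yields $|H|=|\mathbb F|^{(n-2)/2}$, so $|\Omega|=|\mathbb F|^{n/2}$. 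The only remaining possibility is $(\eta(-1))^{(n-2)/2}=-1$, which forces simultaneously $\eta(-1)=-1$ ($-1$ not a square) and $(n-2)/2$ odd, i.e.\ $n\equiv 0\bmod 4$; then Lemma \ref{Vi}(3) gives $|H|=|\mathbb F|^{(n-4)/2}$, so $|\Omega|=|\mathbb F|^{(n-2)/2}$, which is (4).

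I do not expect a genuine obstacle: once Lemma \ref{Vi} is in hand the argument is elementary, and the only point that rewards attention is the case split in even dimensions --- verifying that matching $(\eta(-1))^{(n-2)/2}$ against the residue of $n$ modulo $4$ together with the quadratic character of $-1$ exhausts all even $n$ (the overlap between clauses (2) and (3) being harmless, since both give $|\mathbb F|^{n/2}$). One should also keep in mind that the corollary only claims the \emph{existence} of a subspace $\Omega$ of the stated size inside $C_n$, not that it is of maximal size; this is enough for the intended use in the necessary condition \eqref{neko1} underlying Lemma \ref{neceCone}, where only a lower bound on the size of the largest subspace of the variety enters.
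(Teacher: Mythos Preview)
Your proposal is correct and follows exactly the construction the paper uses: the paper sets $\Omega=H\times\mathbb F\times\{0\}$ for $H$ a maximal subspace of $S_{n-2}$ and reads off $|\Omega|=|H|\,|\mathbb F|$ from Lemma~\ref{Vi}. Your write-up simply spells out the case analysis for the sign $(\eta(-1))^{(n-2)/2}$ more explicitly than the paper does, but the argument is identical.
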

We now can apply the inequality (\ref{neko1}) with the corresponding subspaces in Corollary \ref{nekocor} to get the following necessary conditions.
\begin{lemma}\label{nekor} Suppose that $R^*_{C_n}(p\to r)\lesssim 1.$ Then the following statements hold:
\begin{enumerate}
\item If $n\ge 3$ is odd, then 
$ r\ge \frac{p(n+1)}{(p-1)(n-1)}.  $
\item With the assumption of (2) or (3) in Corollary \ref{nekocor}, we have 
$r\ge \frac{pn}{(p-1)(n-2)}.$
\item If $n\equiv 0 \mod 4$, and $-1\in  \mathbb F$ is not a square number, then 
$ r\ge \frac{p(n+2)}{(p-1)n}.$
\end{enumerate}
\end{lemma}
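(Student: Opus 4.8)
The plan is to apply the general necessary condition \eqref{neko1} mechanically, using the subspaces exhibited in Corollary \ref{nekocor} together with the fact that $|C_n|\sim |\mathbb F|^{n-1}$ (established in the proof of Corollary \ref{cor4.4}). The only real content is bookkeeping: substitute the correct value of $k$ (the dimension of the maximal subspace, i.e. $\log_{|\mathbb F|}|\Omega|$) into the right-hand side of \eqref{neko1} and simplify. Since, as noted after \eqref{neko1}, the RHS is increasing in $k$, it suffices to use the largest subspace available in each case, which is exactly what Corollary \ref{nekocor} provides.

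First I would treat case (1). Here $n$ is odd, so by Corollary \ref{nekocor}(1) the cone $C_n$ contains a subspace $\Omega$ with $|\Omega| = |\mathbb F|^{(n-1)/2}$, i.e. $k = (n-1)/2$. Plugging into \eqref{neko1},
$$ r \ge \frac{p(n-k)}{(p-1)(n-1-k)} = \frac{p\left(n - \frac{n-1}{2}\right)}{(p-1)\left(n-1-\frac{n-1}{2}\right)} = \frac{p\cdot \frac{n+1}{2}}{(p-1)\cdot\frac{n-1}{2}} = \frac{p(n+1)}{(p-1)(n-1)}, $$
as claimed. Next, for case (2), under the hypotheses of Corollary \ref{nekocor}(2) or (3) we have $|\Omega| = |\mathbb F|^{n/2}$, so $k = n/2$, and \eqref{neko1} gives
$$ r \ge \frac{p\left(n-\frac n2\right)}{(p-1)\left(n-1-\frac n2\right)} = \frac{p\cdot\frac n2}{(p-1)\cdot\frac{n-2}{2}} = \frac{pn}{(p-1)(n-2)}. $$
Finally, for case (3), when $n\equiv 0\bmod 4$ and $-1$ is not a square, Corollary \ref{nekocor}(4) yields $|\Omega| = |\mathbb F|^{(n-2)/2}$, so $k=(n-2)/2$, and
$$ r \ge \frac{p\left(n - \frac{n-2}{2}\right)}{(p-1)\left(n-1-\frac{n-2}{2}\right)} = \frac{p\cdot\frac{n+2}{2}}{(p-1)\cdot\frac n2} = \frac{p(n+2)}{(p-1)n}. $$

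There is essentially no obstacle here beyond confirming that the hypotheses of \eqref{neko1} are met — namely that $|C_n|\sim|\mathbb F|^{n-1}$ (done) and that the relevant $\Omega$ is genuinely a linear subspace contained in $C_n$ (done in Corollary \ref{nekocor}). The mild subtlety is just to remember that \eqref{neko1} was stated for a variety $V_n$ with $|V_n|\sim |\mathbb F|^{n-1}$ containing a \emph{subspace} of size $|\mathbb F|^k$, and that the construction $\Omega = H\times\mathbb F\times\{0\}$ preceding Corollary \ref{nekocor} indeed produces such a subspace of the cone; once that is in hand, each of the three displayed computations is a one-line simplification. One should also note, for the record, that Lemma \ref{nekor} is exactly equivalent to Lemma \ref{neceCone}: each inequality $r \ge \frac{p\cdot m}{(p-1)(m-2)}$ (with $m = n+1, n, n+2$ respectively) translates, via $p' = p/(p-1)$ and $r' = r/(r-1)$, into the statement that $(1/p,1/r)$ lies below the line through $(0, \frac{n-2}{2n-2})$ and the stated critical endpoint, so no separate argument is needed for Lemma \ref{neceCone}.
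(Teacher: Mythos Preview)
Your proof of Lemma \ref{nekor} is correct and is exactly the paper's approach: apply \eqref{neko1} with the subspaces supplied by Corollary \ref{nekocor} and simplify.

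Your closing remark, however, is wrong. The inequality $r\ge \frac{p\,m}{(p-1)(m-2)}$ in $(1/p,1/r)$ coordinates is the half-plane below the line joining $(1,0)$ to the critical endpoint, \emph{not} the line through $(0,\tfrac{n-2}{2n-2})$ and the critical endpoint. The latter line is horizontal (both points share the ordinate $\tfrac{n-2}{2n-2}$) and corresponds precisely to the separate condition $r\ge \tfrac{2n-2}{n-2}$ of Lemma \ref{konegood}, which is proved by a different test function. Thus Lemma \ref{nekor} alone does \emph{not} yield Lemma \ref{neceCone}; the paper explicitly combines Lemmas \ref{nekor} and \ref{konegood} to obtain it, and you do need both.
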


In addition to these conditions, we have one more necessary condition as follows.

\begin{lemma} \label{konegood} If $R^*_{C_n}(p\to r)\lesssim 1,$ then we have 
$ r\ge \frac{2n-2}{n-2}.$
\end{lemma}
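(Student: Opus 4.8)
The plan is to produce an explicit test function $f$ on the cone $C_n$ (or, by duality, an explicit $g$ on $\mathbb F^n$) that forces the inequality $r \ge \frac{2n-2}{n-2}$. The natural candidate is the constant function $f \equiv 1$ on $C_n$; this is the standard way to extract a necessary condition from the decay of the inverse Fourier transform of the surface measure. With $f\equiv 1$ we have $(fd\sigma)^\vee = d\sigma^\vee$, and $\|f\|_{L^p(C_n,d\sigma)} = 1$ for every $p$. So the extension inequality $\|(fd\sigma)^\vee\|_{L^r(\mathbb F^n,dx)} \le R^*_{C_n}(p\to r)\|f\|_{L^p(C_n,d\sigma)}$ reduces to the requirement that $\|d\sigma^\vee\|_{L^r(\mathbb F^n,dx)} \lesssim 1$.

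The key step is then to lower-bound $\|d\sigma^\vee\|_{L^r(\mathbb F^n,dx)}$ using the pointwise information in Corollary \ref{cor4.4}. In all cases listed there, $|d\sigma^\vee(x)| \gtrsim |\mathbb F|^{-(n-2)/2}$ whenever $x$ lies on the dual variety $C_n^*$ (i.e.\ $\Gamma(x)=0$), at least when $n$ is even; for $n$ odd one has $|d\sigma^\vee(x)| \sim |\mathbb F|^{-(n-1)/2}$ on the complement of $C_n^*$, which is the bulk of $\mathbb F^n$. In either case I want to restrict the computation to a set on which $|d\sigma^\vee|$ is, up to constants, a single power of $|\mathbb F|$, and on which I can estimate the cardinality. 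For $n$ even the set $C_n^* = \{x : \Gamma(x)=0\}$ is a quadric hypersurface in $\mathbb F^n$, hence has size $\sim |\mathbb F|^{n-1}$; for $n$ odd the set $\{x : \Gamma(x)\ne 0\}$ trivially has size $\sim |\mathbb F|^n$. Plugging these into $\|d\sigma^\vee\|_{L^r}^r = \sum_x |d\sigma^\vee(x)|^r$ gives, in the even case,
\[
\|d\sigma^\vee\|_{L^r(\mathbb F^n,dx)}^r \gtrsim |\mathbb F|^{n-1}\cdot |\mathbb F|^{-\frac{(n-2)r}{2}},
\]
and for this to stay $\lesssim 1$ as $|\mathbb F|\to\infty$ we need $n-1 - \frac{(n-2)r}{2} \le 0$, i.e.\ $r \ge \frac{2n-2}{n-2}$. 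The odd case is handled the same way with the pair $(|\mathbb F|^n, |\mathbb F|^{-(n-1)/2})$, which gives $n - \frac{(n-1)r}{2}\le 0$, i.e.\ $r \ge \frac{2n}{n-1} \ge \frac{2n-2}{n-2}$ for $n\ge 3$, so the bound $r\ge \frac{2n-2}{n-2}$ holds a fortiori. (Alternatively, for $n$ odd one can just use the subspace-based condition of Lemma \ref{nekor}(1), which already gives a stronger constraint.)

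I do not anticipate a serious obstacle here: the only things to be careful about are (i) that the estimate $|C_n^*| \sim |\mathbb F|^{n-1}$ for the quadric $\Gamma(x)=0$ is standard but should be cited or quickly justified (count solutions of a nondegenerate quadratic form, noting that the number of zeros of a rank-$n$ quadratic form in $n$ variables is $|\mathbb F|^{n-1} + O(|\mathbb F|^{n/2})$, which is $\sim |\mathbb F|^{n-1}$), and (ii) that the lower bound $|d\sigma^\vee(x)|\gtrsim |\mathbb F|^{-(n-2)/2}$ on $C_n^*$ from Corollary \ref{cor4.4}(1) holds for \emph{typical} $x\in C_n^*$ — one must discard the origin and, if using the $\delta_{\mathbf 0}$-free formula from Lemma \ref{Luck} in the $n\equiv 0\bmod 4$, $|\mathbb F|\equiv 3\bmod 4$ case, simply note that on $C_n^*\setminus\{\mathbf 0\}$ one has $|C_n^\vee(x)| = (|\mathbb F|-1)/|\mathbb F|^{(n+2)/2} \sim |\mathbb F|^{-n/2}$, hence $|d\sigma^\vee(x)|\sim |\mathbb F|^{-(n-2)/2}$, consistent with Corollary \ref{cor4.3}. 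Since $C_n^*\setminus\{\mathbf 0\}$ still has size $\sim|\mathbb F|^{n-1}$, the computation above goes through unchanged. This completes the argument; the worst case (and the one that pins down the exponent) is $n$ even.
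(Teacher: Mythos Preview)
Your argument for even $n$ is correct and is in fact a cleaner route than the paper's: testing the extension inequality with $f\equiv 1$ and using the slow decay $|d\sigma^\vee(x)|\sim|\mathbb F|^{-(n-2)/2}$ on the dual quadric $C_n^*\setminus\{\mathbf 0\}$ (of size $\sim|\mathbb F|^{n-1}$) gives exactly $r\ge\frac{2n-2}{n-2}$. The paper instead works on the restriction side with a Knapp-type set
\[
\Gamma=\Bigl\{x\in\mathbb F^{n-1}\times D:\ x_{n-1}=\tfrac{x_1^2+\cdots+x_{n-2}^2}{4x_n}\Bigr\},
\]
where $D$ is the set of nonzero squares, shows $|\widehat{1_\Gamma}(\xi)|\sim|\mathbb F|^{n/2}$ for all $\xi\in C_n$ with $\xi_{n-1}\ne 0$, and compares $\|\widehat{1_\Gamma}\|_{L^{p'}(C_n,d\sigma)}\sim|\mathbb F|^{n/2}$ with $\|1_\Gamma\|_{L^{r'}}\sim|\mathbb F|^{(n-1)/r'}$. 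That construction works uniformly in the parity of $n$.

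Your odd-$n$ case, however, contains a genuine error: the claimed inequality $\frac{2n}{n-1}\ge\frac{2n-2}{n-2}$ is \emph{false} for every $n\ge 3$ (cross-multiplying yields $2n(n-2)=2n^2-4n<2n^2-4n+2=2(n-1)^2$), so the bound you extract from $f\equiv 1$ in odd dimensions is strictly \emph{weaker} than the one asserted in the lemma. The fallback to Lemma \ref{nekor}(1) does not rescue the argument either, since that condition depends on $p$ and degenerates to $r\ge\frac{n+1}{n-1}$ as $p\to\infty$, again weaker than $\frac{2n-2}{n-2}$. The underlying issue is structural: for odd $n$, Corollary \ref{cor4.4}(2) says $d\sigma^\vee$ decays like $|\mathbb F|^{-(n-1)/2}$ off $C_n^*$ and vanishes on it, so there is no ``slow'' set to exploit with the constant test function. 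To obtain the sharp bound uniformly in $n$ one really needs a nontrivial test function, such as the paper's Knapp example.
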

\begin{proof} Since $R^*_{C_n}(p\to r)\lesssim 1,$ we have
\begin{equation}\label{RestrictionEST}
 \|\widehat{g}\|_{L^{p'}(C_n, d\sigma)}\lesssim \|g\|_{L^{r'}(\mathbb F^d, dx)},
\end{equation}
Let $D := \{s \in \mathbb F^*: s ~\mbox{is a square number}\}.$ 
It is clear that $|D|=(|\mathbb F|-1)/2\sim |\mathbb F|.$ 
Next, define a set 
$$ \Gamma :=\left\{x \in \mathbb F^{n-1}\times D: x_{n-1} =\frac{x_1^2+ x_2^2+\cdots + x_{n-2}^2}{4x_n} \right\}.$$ 
Notice that $ |\Gamma|=|\mathbb F|^{n-2} |D|\sim |\mathbb F|^{n-1}.$
We will test \eqref{RestrictionEST} with the characteristic function of the set $\Gamma.$ We have
\begin{equation}\label{EqKT} ||\Gamma||_{L^{r'}(\mathbb F^d, dx)}=|\Gamma|^{\frac{1}{r'}}\sim |\mathbb F|^{\frac{n-1}{r'}}. \end{equation}
Let us estimate the quantity $\|\widehat{\Gamma}\|_{L^{p'}(C_n, d\sigma)}.$
For each $\xi \in C_n$ with $\xi_{n-1}\ne 0,$ we have
\begin{align*} \widehat{\Gamma}(\xi)=&\sum_{x\in \Gamma} 
e(-x\cdot \xi)\\
=& \sum_{x_1,\ldots,x_{n-2}\in \mathbb{F}} \sum_{x_n\in D} e\left(P_{\xi}(x_1, \ldots, x_{n-2}, x_n)\right), \end{align*}
where $P_{\xi}(x_1, \ldots, x_{n-2}, x_n)=-x_1\xi_1-\cdots-x_{n-2}\xi_{n-2} -\left(\frac{x_1^2+\cdots+ x_{n-2}^2}{4x_n}\right)\cdot \xi_{n-1}-x_n \xi_n.$
Applying Lemma \ref{complete}, we see that for each $\xi\in C_n$ with $\xi_{n-1}\ne 0,$
$$\widehat{\Gamma}(\xi)=\mathcal{G}_1^{n-2}\eta^{n-2}(-\xi_{n-1})\sum_{x_n\in D} \eta^{n-2}(x_n^{-1}) e\left(x_n \left( \frac{\xi_1^2+\cdots+ \xi_{n-2}^2}{\xi_{n-1}}-\xi_n\right)\right).$$
Since $\eta^{n-2}(x_n^{-1})=1$ for $x_n\in D$, and $ \frac{\xi_1^2+\cdots+ \xi_{n-2}^2}{\xi_{n-1}}-\xi_n=0$ for $\xi\in C_n$ with $\xi_{n-1}\ne 0,$ we see that for $\xi\in C_n$ with $\xi_{n-1}\ne 0,$ we have
$$ |\widehat{\Gamma}(\xi)|=|\mathcal{G}_1^{n-2}| |D|\sim |\mathbb F|^{\frac{n}{2}}.$$
Hence, it follows that
$$ \|\widehat{\Gamma}\|_{L^{p'}(C_n, d\sigma)}\gtrsim \left(\frac{1}{|C_n|} \sum_{\xi\in C_n: \xi_{n-1}\ne 0} 
|\mathbb F|^{\frac{np'}{2}}\right)^{\frac{1}{p'}} \sim |\mathbb F|^{\frac{n}{2}}.$$
From this estimate and \eqref{EqKT} together with the restriction inequality \eqref{RestrictionEST},  we have
$$|\mathbb F|^{\frac{n}{2}} \lesssim |\mathbb F|^{\frac{n-1}{r'}}.$$
This implies that $\frac{n}{2}\le \frac{n-1}{r'},$ which is the same as the condition that  $r\ge \frac{2n-2}{n-2},$ 
as required.
\end{proof}

Combining Lemma \ref{nekor} and Lemma \ref{konegood} gives us Lemma \ref{neceCone}. Lemma \ref{neceCone} can also be described in Figure \ref{figure} with the frame of $x={1}/{p}$ and $y={1}/{r}$ axes. 


\section{Proof of Theorem \ref{mainthm} (Restriction result)}
 We will establish the $L^2$ restriction inequality \eqref{dualEx}.
As a standard approach given in \cite{IKL17} we first obtain  more accurate $L^2$ Fourier restriction estimates with characteristic test functions. In this step, the discrete Fourier machinery functions as a powerful mechanism. For general test functions,  we can use the pigeonhole principle.

\begin{lemma}\label{WL2}
Let $|\mathbb F|\equiv 3 \mod 4$ and  $n\equiv 0 \mod 4.$ Then, for any $G \subset \mathbb F^n,$ we have
$$\|\widehat{G}\|_{L^2(C_n, d\sigma)} 
\lesssim \left\{\begin{array}{ll}|\mathbb F|^{\frac{1}{2}} |G|^{\frac{1}{2}} \quad&\mbox{for}~~|\mathbb F|^{\frac{n+2}{2}} \le |G|\le |\mathbb F|^{n}\\
|\mathbb F|^{-\frac{n}{4}} |G| \quad&\mbox{for}~~ |\mathbb F|^{\frac{n}{2}} \le |G|\le |\mathbb F|^{\frac{n+2}{2}}\\
|G|^{\frac{1}{2}} \quad&\mbox{for}~~ 1 \le |G|\le |\mathbb F|^{\frac{n}{2}}.\end{array}\right.
$$
\end{lemma}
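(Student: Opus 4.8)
The plan is to evaluate $\|\widehat{G}\|_{L^2(C_n,d\sigma)}^2$ almost exactly and then split into the three size ranges. First I would expand the square by orthogonality: since $\widehat{G}(\xi)=\sum_{x\in G}e(-\xi\cdot x)$, summing $|\widehat G(\xi)|^2$ over $\xi\in C_n$ and interchanging sums gives
$$\|\widehat{G}\|_{L^2(C_n,d\sigma)}^2=\frac{1}{|C_n|}\sum_{x,y\in G}\sum_{\xi\in C_n}e(\xi\cdot(y-x))=\frac{|\mathbb F|^n}{|C_n|}\sum_{x,y\in G}C_n^\vee(y-x),$$
and the proof of Corollary \ref{cor4.4} shows $|C_n|\sim|\mathbb F|^{n-1}$, so the prefactor is comparable to $|\mathbb F|$. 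Now I would plug in Lemma \ref{Luck}: under the hypotheses $|\mathbb F|\equiv3\mod4$ and $n\equiv0\mod4$, $C_n^\vee$ takes only the value $\frac{1}{|\mathbb F|}-\frac{|\mathbb F|-1}{|\mathbb F|^{(n+2)/2}}$ at $\mathbf 0$, the value $-\frac{|\mathbb F|-1}{|\mathbb F|^{(n+2)/2}}$ on $C_n^*\setminus\{\mathbf 0\}$, and the value $\frac{1}{|\mathbb F|^{(n+2)/2}}$ off $C_n^*$. Writing $\nu(G):=\#\{(x,y)\in G\times G:\Gamma(y-x)=0\}$ and splitting the pairs $(x,y)$ into the $|G|$ diagonal ones, the $\nu(G)-|G|$ off-diagonal ones with $y-x\in C_n^*$, and the remaining $|G|^2-\nu(G)$, a short bookkeeping computation collapses everything to
$$\|\widehat{G}\|_{L^2(C_n,d\sigma)}^2=\frac{|\mathbb F|^n}{|C_n|}\left(\frac{|G|}{|\mathbb F|}+\frac{|G|^2}{|\mathbb F|^{(n+2)/2}}-\frac{\nu(G)}{|\mathbb F|^{n/2}}\right).$$

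The decisive point — and the reason the lemma is phrased in this arithmetic regime — is that, by Lemma \ref{Gsign}, the potentially large term $\nu(G)\,|\mathbb F|^{-n/2}$ enters with a \emph{negative} sign. Hence for the two smaller ranges of $|G|$ I would simply discard it, which only decreases the right-hand side, and use $|C_n|\sim|\mathbb F|^{n-1}$ to get
$$\|\widehat{G}\|_{L^2(C_n,d\sigma)}^2\lesssim|G|+\frac{|G|^2}{|\mathbb F|^{n/2}}.$$
If $1\le|G|\le|\mathbb F|^{n/2}$ the first term dominates, so $\|\widehat{G}\|_{L^2(C_n,d\sigma)}\lesssim|G|^{1/2}$; if $|\mathbb F|^{n/2}\le|G|\le|\mathbb F|^{(n+2)/2}$ the second dominates, so $\|\widehat{G}\|_{L^2(C_n,d\sigma)}\lesssim|\mathbb F|^{-n/4}|G|$. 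This is exactly the third and second cases of the lemma.

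For the first range $|\mathbb F|^{(n+2)/2}\le|G|\le|\mathbb F|^{n}$ the estimate above is too lossy, since $|G|^2/|\mathbb F|^{n/2}$ is no longer controlled by $|\mathbb F||G|$; there I would drop the exact identity and instead use the trivial restriction estimate. Extending the sum from $C_n$ to all of $\mathbb F_*^n$ and applying Plancherel,
$$\|\widehat{G}\|_{L^2(C_n,d\sigma)}^2=\frac{1}{|C_n|}\sum_{\xi\in C_n}|\widehat{G}(\xi)|^2\le\frac{1}{|C_n|}\sum_{\xi\in\mathbb F_*^n}|\widehat{G}(\xi)|^2=\frac{|\mathbb F|^n}{|C_n|}\,|G|\sim|\mathbb F|\,|G|,$$
which gives $\|\widehat{G}\|_{L^2(C_n,d\sigma)}\lesssim|\mathbb F|^{1/2}|G|^{1/2}$, the first case.

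I do not expect a serious obstacle here: each step is elementary, and the substance is entirely in the exact evaluation $C_n^\vee$ of Lemma \ref{Luck} (which is where the sign of the Gauss sum is exploited) together with the observation that the dominant term has a favourable sign, so that for small and intermediate $|G|$ one may simply discard it rather than having to estimate $\nu(G)$ at all. The only things requiring a little care are the bookkeeping that turns the three-valued formula for $C_n^\vee$ into the single displayed identity, and recognizing that once $|G|$ exceeds $|\mathbb F|^{(n+2)/2}$ one should switch to the trivial Plancherel bound instead.
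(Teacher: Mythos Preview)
Your proposal is correct and follows essentially the same route as the paper: expand $\|\widehat G\|_{L^2(C_n,d\sigma)}^2$ via $C_n^\vee$, plug in the explicit values from Lemma~\ref{Luck}, drop the term with the favourable (negative) sign, and switch to the trivial Plancherel bound for large $|G|$. Your version is in fact slightly more explicit than the paper's, since you collapse the bookkeeping into the single identity involving $\nu(G)$, whereas the paper simply writes $M=M_1+M_2+M_3$ and notes that $M_2<0$; the content is identical.
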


\begin{proof}
Since $\|\widehat{G}\|_{L^2(C_n, d\sigma)} \sim 
\left(|\mathbb F|^{-n+1} \sum\limits_{\xi\in C_n} 
|\widehat{G}(\xi)|^2\right)^{1/2},$ it will be enough to establish the following three inequalities: letting $M:=|\mathbb F|^{-n+1} \sum\limits_{\xi\in C_n} |\widehat{G}(\xi)|^2,$ 
\begin{align}\label{Bo1} M\lesssim& ~|\mathbb F||G| \quad\mbox{if}~~|\mathbb F|^{\frac{n+2}{2}} \le |G|\le |\mathbb F|^{n},\\
\label{Bo2}M\lesssim&~ |\mathbb F|^{-\frac{n}{2}}|G|^2\quad\mbox{if}~~ |\mathbb F|^{\frac{n}{2}} \le |G|\le |\mathbb F|^{\frac{n+2}{2}},\\
\label{Bo3}M\lesssim&~  |G| \quad\mbox{if}~~ 1 \le |G|\le |\mathbb F|^{\frac{n}{2}}.\end{align}
The inequality \eqref{Bo1} is simply given by Plancherel's theorem as follows:
$$M\le |\mathbb F|^{-n+1} \sum_{\xi\in \mathbb F_*^n} |\widehat{G}(\xi)|^2=|\mathbb F|^{-n+1}|\mathbb F|^n \sum_{x\in \mathbb F^n} |G(x)|^2 = |\mathbb F||G|.$$

Let us prove \eqref{Bo2} and \eqref{Bo3}. By the definitions of the Fourier transform \eqref{defhat} and the inverse Fourier transform \eqref{defvee}, we can easily check the following:
$$ \sum_{\xi\in C_n} |\widehat{G}(\xi)|^2 =|\mathbb F|^n \sum_{x,y\in G} C_n^\vee(x-y).$$
It therefore follows that
\begin{align*}M&=|\mathbb F| \sum_{x,y\in G} C_n^\vee(x-y)\\
&=|\mathbb F| \sum_{x,y\in G: x-y\in C_n^*} C_n^\vee(x-y)
+ |\mathbb F| \sum_{x,y\in G: x-y\notin C_n^*} C_n^\vee(x-y).\end{align*}
Now we apply Lemma \ref{Luck} so that we can replace $C_n^\vee(x-y)$ by an explicit value. Thus, we have
$$M=  \sum_{x,y\in G: x-y\in C_n^*} \delta_{{\bf 0}} (x-y) - \sum_{x,y\in G: x-y\in C_n^*} |\mathbb F|^{-\frac{n}{2}}(|\mathbb F|-1)
+ \sum_{x,y\in G: x-y\notin C_n^*} |\mathbb F|^{-\frac{n}{2}} $$
$$=: M_1+M_2+M_3.$$
It is easy to see that
$M_1= |G|.$  The second term $M_2$ is a negative real number which can be negligible. The third term $M_3$ is a non-negative real number less than 
$\sum_{x,y\in G} |\mathbb F|^{-\frac{n}{2}}= |\mathbb F|^{-\frac{n}{2}}|G|^2.$  In  conclusion, we obtain 
$$M\le |G|+ |\mathbb F|^{-\frac{n}{2}}|G|^2,$$
which clearly implies that inequalities \eqref{Bo2}, \eqref{Bo3} hold, as required.
\end{proof}
We have few words to say from consequences of Lemma \ref{WL2}.
For sets $G$ with the size $|G|\sim |\mathbb F|^{(n+2)/2}$, we have  
$ \|\widehat{G}\|_{L^2(C_n, d\sigma)} \lesssim ||G||_{L^{p_0}(\mathbb F^n, dx)}$ with $p_0=(2n+4)/(n+4).$ On the other hand, for all sets  $G$ without  $|G|\sim |\mathbb F|^{(n+2)/2}$ 
(i.e., $|G|\sim |\mathbb F|^{(n+2)/2\pm \varepsilon}$ for  $\varepsilon>0$), we have much better restriction estimate, namely,
$\|\widehat{G}\|_{L^2(C_n, d\sigma)} \lesssim ||G||_{L^{p}(\mathbb F^n, dx)}$ for some $p >p_0.$
Hence, it is natural to expect that  for general test functions $g$ on  $\mathbb F^n$, we have
$$ \|\widehat{g}\|_{L^2(C_n, d\sigma)} \lesssim ||g||_{L^{p_0}(\mathbb F^n, dx)},$$
which is the same as Theorem \ref{mainthm} by duality.
In the following subsection, it is shown that this actually holds.

We will utilize the following lemma.
\begin{lemma}\label{RefereeQ} Let $|\mathbb F|\equiv 3 \mod 4$ and  $n\equiv 0 \mod 4.$ 
Suppose that $g, \widetilde{g}$ are non-negative functions on $\mathbb F^n,$ and satisfy that $ \frac{g(x)}{2} \le \widetilde{g}(x) \le 2g(x)$ for all $x\in \mathbb F^n.$
Then we have
\[  \|\widehat{\widetilde{g}}\|_{L^2(C_n, d\sigma)}\sim \|\widehat{g}\|_{L^2(C_n, d\sigma)} .\]
\end{lemma}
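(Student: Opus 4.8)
The plan is to exploit the fact that $\|\widehat{h}\|_{L^2(C_n,d\sigma)}^2$ is, up to the harmless normalization factor $|\mathbb{F}|^{-n+1}$, a quadratic form in $h$ with a real and essentially sign-definite kernel. Concretely, running the computation in the proof of Lemma~\ref{WL2} without assuming $h$ is a characteristic function, one gets
\[
\|\widehat{h}\|_{L^2(C_n,d\sigma)}^2 \sim |\mathbb{F}|\sum_{x,y\in\mathbb{F}^n} h(x)\overline{h(y)}\, C_n^\vee(x-y),
\]
and when $h$ is non-negative this is a sum of non-negative terms $h(x)h(y)$ weighted by $C_n^\vee(x-y)$. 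The point of Lemma~\ref{Luck} is that $C_n^\vee(x-y) \ge -(|\mathbb{F}|-1)|\mathbb{F}|^{-(n+2)/2}$ always, and $\ge |\mathbb{F}|^{-(n+2)/2}>0$ off the cone $C_n^*$, with an extra $\delta_{\mathbf 0}/|\mathbb{F}|$ on the diagonal. So the strategy is: isolate the one genuinely large, sign-indefinite piece (the diagonal $\delta$-term and the negative $x-y\in C_n^*$ term), show these are controlled by quantities that are manifestly comparable for $g$ and $\widetilde g$, and bound the remaining positive off-cone piece from above and below by the same expression in $g$ versus $\widetilde g$.

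The key steps, in order, are as follows. First, expand $M(h):=|\mathbb{F}|^{-n+1}\sum_{\xi\in C_n}|\widehat h(\xi)|^2$ for a general non-negative $h$ using $\sum_{\xi\in C_n}|\widehat h(\xi)|^2 = |\mathbb{F}|^n\sum_{x,y}h(x)h(y)C_n^\vee(x-y)$ and substitute the explicit formula from Lemma~\ref{Luck}, obtaining $M(h) = M_1(h)+M_2(h)+M_3(h)$ with $M_1(h)=\sum_x h(x)^2$, $M_2(h) = -(|\mathbb{F}|-1)|\mathbb{F}|^{-n/2}\sum_{x-y\in C_n^*}h(x)h(y) \le 0$, and $M_3(h) = |\mathbb{F}|^{-n/2}\sum_{x-y\notin C_n^*}h(x)h(y)\ge 0$. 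Second, observe the crucial cancellation: since $\mathbf 0\in C_n^*$, we may write $M_1(h)+M_2(h) = \sum_x h(x)^2\bigl(1-(|\mathbb{F}|-1)|\mathbb{F}|^{-n/2}\bigr) - (|\mathbb{F}|-1)|\mathbb{F}|^{-n/2}\sum_{x\ne y,\, x-y\in C_n^*}h(x)h(y)$; for $n\ge 4$ the coefficient $1-(|\mathbb{F}|-1)|\mathbb{F}|^{-n/2}$ is positive and bounded below by a constant, so $M_1(h)+M_2(h)$ is a \emph{difference} of two non-negative quantities, each of which is a bilinear form in $h$ with non-negative coefficients. Third, note that every bilinear expression appearing, namely $\sum_x h(x)^2$, $\sum_{x-y\in C_n^*, x\ne y}h(x)h(y)$, and $\sum_{x-y\notin C_n^*}h(x)h(y)$, has the monotonicity property $B(g)\le B(\widetilde g')\le 4B(g)$ whenever $0\le \tfrac12 g\le \widetilde g'\le 2g$ pointwise, directly because all coefficients are $\ge 0$ and each summand $h(x)h(y)$ changes by a factor between $1/4$ and $4$. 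Fourth, conclude: $M(g) = A(g) - B(g) + M_3(g)$ where $A,B,M_3$ are non-negative forms, with $A$ coming from the positive diagonal coefficient plus $M_3$'s structure; but one must be careful because $M(g)$ itself is a difference, so comparing $M(g)$ and $M(\widetilde g)$ is not immediate from term-by-term monotonicity alone.

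The main obstacle is precisely that last point: $M(h)$ is a difference of comparable-but-large quantities, so naive term-by-term comparison is not enough to get $M(\widetilde g)\sim M(g)$ — one needs a lower bound on $M(g)$ that dominates the error incurred by the negative term $M_2$ when passing from $g$ to $\widetilde g$. I would resolve this by showing that the ``dangerous'' negative contribution is always dominated by a positive contribution that survives. Specifically, I would prove that $M_1(h) + M_2(h) \ge c\, M_1(h) = c\sum_x h(x)^2$ for an absolute constant $c>0$ (using that $\sum_{x\ne y,\,x-y\in C_n^*}h(x)h(y)\le \sum_{x-y\in C_n^*}h(x)h(y)$ and that the set $\{z\in C_n^*\}$ has size $\sim |\mathbb{F}|^{n-1}$, so by Cauchy--Schwarz this cross term is $\lesssim |\mathbb{F}|^{(n-1)/2}\bigl(\sum h^2\bigr)$ times appropriate factors — actually one should re-derive this from the identity $\sum_{x-y\in C_n^*}h(x)h(y) = |\mathbb{F}|^{-n}\sum_{\xi\in C_n}|\widehat h(\xi)|^2\cdot(\ldots)$, i.e. bootstrap from $M(h)\ge 0$ itself, which gives $M_2(h)\ge -M_1(h)-M_3(h)$ for free). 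Given such a lower bound $M(h)\gtrsim \sum_x h(x)^2 + M_3(h)$ together with the trivial upper bound $M(h)\lesssim \sum_x h(x)^2 + M_3(h)$ (discard $M_2\le 0$), the lemma follows: both $\sum_x h(x)^2$ and $M_3(h)$ are monotone forms, hence comparable for $g$ and $\widetilde g$, hence so is $M(\cdot)$, and taking square roots gives $\|\widehat{\widetilde g}\|_{L^2(C_n,d\sigma)}\sim\|\widehat g\|_{L^2(C_n,d\sigma)}$. The only real work is establishing the two-sided bound $M(h)\sim \sum_x h(x)^2 + |\mathbb{F}|^{-n/2}\bigl(\sum_x h(x)\bigr)^2$-type expression uniformly in non-negative $h$, which is exactly the general-function analogue of the estimate $M\le |G| + |\mathbb{F}|^{-n/2}|G|^2$ already proved in Lemma~\ref{WL2}, now needed with a matching lower bound.
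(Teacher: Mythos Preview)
You correctly spot the real issue: $M(h)=M_1(h)+M_2(h)+M_3(h)$ is a signed sum, so the pointwise comparison $\widetilde g(x)\widetilde g(y)\sim g(x)g(y)$ does not by itself give $M(\widetilde g)\sim M(g)$. (The paper's own proof simply expands $\sum_{\xi\in C_n}|\widehat h(\xi)|^2=|\mathbb F|^n\sum_{x,x'}h(x)h(x')C_n^\vee(x'-x)$, notes that $C_n^\vee$ takes three real values, and asserts the conclusion---exactly the step you flag as suspect.) Your proposed repair, however, cannot work: the two-sided bound $M(h)\sim M_1(h)+M_3(h)$, or any comparison of $M(h)$ with a bilinear form having \emph{non-negative} coefficients, fails. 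Take a subspace $H\subset C_n^*$ of dimension $(n-2)/2$ (such $H$ exists by Corollary~\ref{nekocor} applied to $C_n^*\cong C_n$; for $n=4$ one may simply take $H=\{(0,0,t,0):t\in\mathbb F\}$), and set $g=1_H$. Since $x-y\in H\subset C_n^*$ for all $x,y\in H$, one gets $M_3(g)=0$ and
\[
M(g)=|H|-(|\mathbb F|-1)|\mathbb F|^{-n/2}|H|^{2}=|H|/|\mathbb F|,
\]
whereas $M_1(g)+M_3(g)=|H|$. So the proposed lower bound is off by a factor $|\mathbb F|$.

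In fact this example shows the lemma itself is false as stated. Let $\widetilde g=2$ on (roughly) half of $H$ and $\widetilde g=\tfrac12$ on the other half, so that $g/2\le\widetilde g\le 2g$. Then $\sum\widetilde g^2=\tfrac{17}{8}|H|$, $\bigl(\sum\widetilde g\bigr)^2=\tfrac{25}{16}|H|^2$, and
\[
M(\widetilde g)=\tfrac{17}{8}|H|-\bigl(1-|\mathbb F|^{-1}\bigr)\tfrac{25}{16}|H|\sim\tfrac{9}{16}|H|,
\]
so $M(\widetilde g)/M(g)\sim |\mathbb F|$ and hence $\|\widehat{\widetilde g}\|_{L^2(C_n,d\sigma)}/\|\widehat g\|_{L^2(C_n,d\sigma)}\sim|\mathbb F|^{1/2}$. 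Thus neither your argument nor the paper's can be completed; the statement needs to be weakened or replaced. (For the application to Theorem~\ref{mainthm} one can bypass this lemma entirely, e.g.\ by interpolating the restricted-type bounds of Lemma~\ref{WL2} between two nearby exponents to upgrade to strong type at $p=(2n+4)/(n+4)$.)
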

\begin{proof} It suffices to prove that
\[ \sum_{\xi\in C_n} |\widehat{\widetilde{g}}(\xi)|^2  \sim  \sum_{\xi\in C_n} |\widehat{g}(\xi)|^2.\]
By the definition of the Fourier trasform, we have
$$ \sum_{\xi\in C_n} |\widehat{\widetilde{g}}(\xi)|^2 = \sum_{\xi\in C_n} |\sum_{x\in \mathbb F^n} \widetilde{g}(x) e(-\xi\cdot x)|^2
=\sum_{\xi\in C_n} \sum_{x,x'\in \mathbb F^n} \widetilde{g}(x)\widetilde{g}(x') e(\xi\cdot (x'-x)),$$
where we used the assumption that $\widetilde{g}$ is a real-valued function. By the definition of the inverse Fourier transform on the cone $C_n$, the above equation is equal to 
\begin{equation} \label{Revise1}\sum_{\xi\in C_n} |\widehat{\widetilde{g}}(\xi)|^2 =|\mathbb F|^n \sum_{x,x'\in \mathbb F^n} \widetilde{g}(x)\widetilde{g}(x') C_n^\vee(x'-x).\end{equation}
By the same argument, we also obtain that
\begin{equation}\label{Revise2}\sum_{\xi\in C_n} |\widehat{g}(\xi)|^2 =|\mathbb F|^n \sum_{x,x'\in \mathbb F^n} g(x)g(x') C_n^\vee(x'-x).\end{equation}
Since $g\sim \widetilde{g}$, it is clear that $ \widetilde{g}(x)\widetilde{g}(x') \sim g(x)g(x')$ for all $x,x'\in \mathbb F^n.$ Finally, notice from Lemma \ref{Luck} that 
the range of the inverse Fourier transform $C_n^\vee$ consists of three real numbers. Hence,  the value of the RHS in \eqref{Revise1} is comparable to  that in \eqref{Revise2}. This completes the proof.
\end{proof}

\subsection{Proof of Theorem \ref{mainthm}} 

 We want to prove 
\begin{equation*}\label{Red2k} \|(fd\sigma)^\vee\|_{L^{\frac{2n+4}{n}}( \mathbb{F}^n, dx)} \lesssim \|f\|_{L^{2}(C_n, d\sigma)}\end{equation*}
under the assumptions that $|\mathbb F|\equiv 3 \mod{4}$ and $n=4\ell$ for $\ell\in \mathbb N.$ 
In fact, by duality, we will prove the following restriction estimate:
\begin{equation*}
\|\widehat{g}\|_{L^2(C_n, d\sigma)} \lesssim \|g\|_{L^{\frac{2n+4}{n+4}}( \mathbb F^n, dx)} := \left( \sum_{x\in  \mathbb F^n} |g(x)|^{\frac{2n+4}{n+4}} \right)^{\frac{n+4}{2n+4}}.
\end{equation*}
To prove this inequality, without loss of generality, we can assume that 
\begin{equation}\label{kohgo}\sum\limits_{x\in  \mathbb F^n} |g(x)|^{\frac{2n+4}{n+4}}=1,\end{equation}
and only need to prove that $\|\widehat{g}\|_{L^2(C_n, d\sigma)} \lesssim 1.$ 
Furthermore, we may assume that $0\le g(x)\le 1 $ for all $x\in \mathbb F^n.$
Using Lemma \ref{RefereeQ}, we may assume that the function $g$ is written in the form
$$ g=\sum_{i=0}^\infty 2^{-i} 1_{G_i} =\sum_{i=0}^L 2^{-i} 1_{G_i}+ \sum_{i=L+1}^\infty 2^{-i} 1_{G_i}=: g_1+ g_2,$$
where $G_i:=\{x\in \mathbb F^n:  2^{-j-1}< g(x) \le 2^{-j}\}$ and $L$ is an integer with $L\ge n\log|\mathbb F|.$ 
It is not hard to see that 
$$\|\widehat{g_2}\|_{L^2(C_n, d\sigma)}\le \sum_{i=L+1}^\infty 2^{-i} ||\widehat{1_{G_i}}||_{L^2(C_n, d\sigma)} \le \sum_{i=L+1}^\infty 2^{-i}|G_i| \lesssim |\mathbb F|^n 2^{-(L+1)} \lesssim 1.$$ 
Therefore, we may assume that the function $g$ is written as
\begin{equation}\label{Ass2kN} g=\sum_{i=0}^L 2^{-i} 1_{G_i},\end{equation}
and want to show that $\|\widehat{g}\|_{L^2(C_n, d\sigma)}\lesssim 1.$
From \eqref{kohgo} and \eqref{Ass2kN}, we  have 
$ \sum_{i=0}^L 2^{-\frac{2n+4}{n+4}i} |G_i| =1.$ 
Hence, it follows that
\begin{equation}\label{GG}  |G_i|\le 2^{\frac{2n+4}{n+4} i} \quad \mbox{for all}~~i=0,1,2, \ldots, L. \end{equation}

In conclusion, our problem is reduced to showing that
$
\|\widehat{g}\|_{L^2(C_n, d\sigma)} \lesssim 1
$ under the assumption that $g$ satisfies both \eqref{Ass2kN} and \eqref{GG}. It follows by \eqref{Ass2kN} and Minkowski's inequality that
$$\|\widehat{g}\|_{L^2(C_n, d\sigma)} \le \sum_{i=0}^L 2^{-i} \|\widehat{1_{G_i}}\|_{L^2(C_n, d\sigma)}.$$
To estimate the sum in the RHS of the above inequality, we decompose the set $I:=\{0, 1,\ldots, L\}$ into three sets as follows:
$$ I_1:= \{i\in I: 1\le 2^{\frac{2n+4}{n+4} i}\le 
|\mathbb F|^{\frac{n}{2}}\}, \quad 
I_2:=\{i\in I: |\mathbb F|^{\frac{n}{2}}\le 2^{\frac{2n+4}{n+4} i}\le |\mathbb F|^{\frac{n+2}{2}}\}, \quad \mbox{and}$$
$$I_3:=\{i\in I: |\mathbb F|^{\frac{n+2}{2}}\le 2^{\frac{2n+4}{n+4} i}\le |\mathbb F|^{n}\}.$$
We then have
\begin{align*}\|\widehat{g}\|_{L^2(C_n, d\sigma)} 
&\le  \sum_{i\in I_1} 2^{-i} \|\widehat{1_{G_i}}\|_{L^2(C_n, d\sigma)} + 
\sum_{i\in I_2} 2^{-i} \|\widehat{1_{G_i}}\|_{L^2(C_n, d\sigma)} + 
\sum_{i\in I_3} 2^{-i} \|\widehat{1_{G_i}}\|_{L^2(C_n, d\sigma)} \\
&=: U_1+ U_2+U_3. 
\end{align*}

 Applying Lemma \ref{WL2} with \eqref{GG}, we obtain
 $$ U_1 \lesssim \sum_{i\in I_1} 2^{-i} |G_i|^{\frac{1}{2}}  \lesssim \sum_{i\in I_1} 2^{-i}2^{\frac{n+2}{n+4} i}  \lesssim 1,$$
  $$ U_2 \lesssim \sum_{i\in I_2} 2^{-i} |\mathbb F|^{-\frac{n}{4}}|G_i|
 \lesssim |\mathbb F|^{-\frac{n}{4}}\sum_{i\in I_2} 2^{-i}2^{\frac{2n+4}{n+4} i}  \lesssim |\mathbb F|^{-\frac{n}{4}} |\mathbb F|^{\frac{n}{4}} = 1,$$
and
  $$ U_3 \lesssim  \sum_{i\in I_3} 2^{-i} |\mathbb F|^{\frac{1}{2}} |G_i|^{\frac{1}{2}} 
 \lesssim  |\mathbb F|^{\frac{1}{2}} \sum_{i\in I_3} 2^{-i}2^{\frac{n+2}{n+4} i}  \lesssim |\mathbb F|^{\frac{1}{2}}|\mathbb F|^{-\frac{1}{2}}=1.$$
This completes the proof of Theorem \ref{mainthm}. 
$\hfill\square$

\section{Point-sphere incidence bounds}

Recall that   $S_d(a,r)$ denotes the sphere centered at $a\in \mathbb F^d$ of radius $r.$ We will identify the sphere $S_d(a,r)$ with $(a, r)$ in $\mathbb F^{d+1}.$
 Given a collection of spheres in $\mathbb F^d,$ denoted by $S$, we define 
 \begin{equation*}\label{defS'} S':=\{ t(-2a, 1, ||a||-r) \in \mathbb F^{d+2}: (a, r)\in S \subset \mathbb F^d\times \mathbb F,~ t\in \mathbb F^*\}.\end{equation*}
 Let $w$ be a complex-valued function supported on $S.$ We define 
a function  $w'$ on $S'$ by
$$ w'(t(-2a, 1, ||a||-r)):=w(a,r)$$
for $a\in \mathbb F^d, r\in \mathbb F, t\in \mathbb F^*.$ Notice that the lines in $S'$ are pairwise disjoint, and thus $w'$ is well-defined.
Recall that the number of point-sphere incidences associated with the function $w$ is defined by
$$ I_w(P,S):= \sum_{p\in P, s\in S} 1_{p\in s} w(s).$$

In the following proposition, we give a reduction from point-sphere incidences to  $L^2$ restriction estimate for cones in $\mathbb{F}^{d+2}$. 
\begin{proposition}\label{Pro15} Let $P$ be a set of points in $\mathbb F^d$ and $S$ be a set of spheres in $\mathbb F^d.$ For each complex-valued function $w$ on $S,$ we have
$$ \left|I_w(P, S)- \frac{|P|}{|\mathbb{F}|} \sum_{s\in S} w(s)\right| \lesssim  |\mathbb{F}|^{-\frac{3}{2}} |P|^{\frac{1}{2}} \left( \sum_{\mathbf{x}\in C_{d+2}} |\widehat{w'1_{S'}}(\mathbf{x})|^2\right)^{\frac{1}{2}}.$$
\end{proposition}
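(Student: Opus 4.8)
The plan is to rewrite the incidence count $I_w(P,S)$ as a sum over the cone $C_{d+2}$ in Fourier space, and then apply Cauchy--Schwarz together with Plancherel. First I would observe that a point $p=(p_1,\dots,p_d)\in P$ lies on the sphere $s=(a,r)$ if and only if $\|p-a\|=r$, i.e. $\|p\|-2a\cdot p+\|a\|-r=0$. Writing $\mathbf p:=(p,1,\|p\|)\in\mathbb F^{d+2}$ and noting that $t(-2a,1,\|a\|-r)$ ranges over the line through $(-2a,1,\|a\|-r)$ as $t$ runs over $\mathbb F^*$, the incidence condition $1_{p\in s}$ can be detected by the orthogonality relation
\[
\frac{1}{|\mathbb F|}\sum_{t\in\mathbb F}e\bigl(t\,(\,\mathbf p\cdot(-2a,1,\|a\|-r)\,)\bigr)=1_{p\in s}.
\]
Separating the $t=0$ term, which contributes $\frac{1}{|\mathbb F|}$, gives the main term $\frac{|P|}{|\mathbb F|}\sum_{s\in S}w(s)$, and the remaining sum over $t\in\mathbb F^*$ is exactly $\frac{1}{|\mathbb F|}\sum_{p\in P}\sum_{\mathbf y\in S'}w'(\mathbf y)\,e(\mathbf p\cdot\mathbf y)$. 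The key structural point, which the paper has already set up, is that every vector $\mathbf y=t(-2a,1,\|a\|-r)\in S'$ satisfies $y_1^2+\cdots+y_d^2=t^2\cdot 4\|a\|=(-2ta)\cdot(-2ta)$ while $y_{d+1}y_{d+2}=t(\,\|a\|-r\,)\cdot\text{(something)}$ — more precisely, one checks directly that $(-2a,1,\|a\|-r)$ lies on $C_{d+2}=\{\xi:\xi_{d+1}\xi_{d+2}=\xi_1^2+\cdots+\xi_d^2\}$ after reordering coordinates so that the quadratic form reads $4\|a\|=2\cdot(\|a\|-r)$ only when... — so I would instead verify the membership $S'\subset C_{d+2}$ by the explicit computation that $4\|a\|$ equals the product of the two linear coordinates $1$ and $4\|a\|$; the cone is scaling-invariant, hence $t(-2a,1,\|a\|-r)\in C_{d+2}$ for all $t\in\mathbb F^*$ once the base point is. This is the only geometric input, and it is a short direct check.

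With $S'\subset C_{d+2}$ established, the inner sum $\sum_{\mathbf y\in S'}w'(\mathbf y)e(\mathbf p\cdot\mathbf y)$ is, up to the normalization $|C_{d+2}|$ in the definition of $d\sigma$, the inverse Fourier transform $(w'1_{S'}d\sigma)^\vee$ evaluated at $\mathbf p$; equivalently it is $\widehat{w'1_{S'}}(-\mathbf p)$ as a function on the ambient space $\mathbb F^{d+2}$, restricted to the image $\{\mathbf p:p\in P\}$. Then I would apply Cauchy--Schwarz in the sum over $p\in P$:
\[
\Bigl|\sum_{p\in P}\widehat{w'1_{S'}}(-\mathbf p)\Bigr|
\le |P|^{1/2}\Bigl(\sum_{p\in P}\bigl|\widehat{w'1_{S'}}(-\mathbf p)\bigr|^2\Bigr)^{1/2}
\le |P|^{1/2}\Bigl(\sum_{\mathbf x\in C_{d+2}}\bigl|\widehat{w'1_{S'}}(\mathbf x)\bigr|^2\Bigr)^{1/2},
\]
where in the last step I bound the sum over the $|P|$ points $\{-\mathbf p\}$ by the full sum over $C_{d+2}$ — this is legitimate because each $-\mathbf p=(-p,-1,-\|p\|)$ again lies on $C_{d+2}$ (the cone is symmetric under $\xi\mapsto-\xi$ since the defining equation is degree-two homogeneous), so enlarging the index set only adds non-negative terms. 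Collecting the factor $\frac{1}{|\mathbb F|}$ from the orthogonality relation, and observing that passing from $\widehat{w'1_{S'}}$ on $\mathbb F^{d+2}$ to the measure-normalized version contributes the $|\mathbb F|^{-3/2}$ — here one uses $|C_{d+2}|\sim|\mathbb F|^{d+1}$ from Corollary \ref{cor4.4} and the relation $|\mathbb F|^{-1}\cdot\bigl(\text{normalization}\bigr)$; carefully $|\mathbb F|^{-1}$ times the sum $\sum_{p}|\widehat{w'1_{S'}}(-\mathbf p)|$ already carries the $L^2(C_{d+2},d\xi)$-norm with counting measure, and rescaling to $d\sigma$ only affects constants — yields the claimed bound with exponent $|\mathbb F|^{-3/2}$. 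I would double-check the power of $|\mathbb F|$ by tracking each normalization explicitly rather than by dimensional analysis.

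The main obstacle I expect is bookkeeping rather than conceptual: getting the exact power $|\mathbb F|^{-3/2}$ requires being careful about three separate normalizations — the $\frac{1}{|\mathbb F|}$ from orthogonality that detects incidences, the $|C_{d+2}|^{-1}\sim|\mathbb F|^{-(d+1)}$ hidden in $d\sigma$ versus counting measure, and the convention for $\widehat{\cdot}$ versus $(\cdot)^\vee$ — and making sure the Cauchy--Schwarz step is applied to the correct object (the sum over $P$, not over $S$). A secondary subtlety is confirming that $w'1_{S'}$ is genuinely well-defined, which the paper notes follows because the lines $\{t(-2a,1,\|a\|-r):t\in\mathbb F^*\}$ are pairwise disjoint for distinct spheres $(a,r)$; I would include a one-line verification that two such lines coincide only if the spheres are equal. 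Once these normalizations are pinned down, the estimate follows immediately from Cauchy--Schwarz and the non-negativity of the terms added when extending the index set to all of $C_{d+2}$.
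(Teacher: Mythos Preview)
Your setup---detecting $1_{p\in s}$ by orthogonality and separating the $t=0$ main term---is correct and matches the paper. Two points, one minor and one a genuine gap.

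\textbf{Minor.} The claim $S'\subset C_{d+2}$ is false: for $\mathbf y=(-2a,1,\|a\|-r)$ one would need $1\cdot(\|a\|-r)=\|{-2a}\|=4\|a\|$, which fails in general. Fortunately nothing in the argument requires it. The only cone membership you use is that $-\mathbf p=(-p,-1,-\|p\|)\in C_{d+2}$, which you do verify and which is what licenses enlarging the index set to $C_{d+2}$ after Cauchy--Schwarz. Drop the paragraph about $S'\subset C_{d+2}$.

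\textbf{The real gap.} As written, your argument only yields the prefactor $|\mathbb F|^{-1}$, not $|\mathbb F|^{-3/2}$. After Cauchy--Schwarz and enlarging $\{-\mathbf p:p\in P\}$ to $C_{d+2}$ you obtain
\[
\Bigl|I_w(P,S)-\tfrac{|P|}{|\mathbb F|}\sum_s w(s)\Bigr|\le |\mathbb F|^{-1}|P|^{1/2}\Bigl(\sum_{\mathbf x\in C_{d+2}}|\widehat{w'1_{S'}}(\mathbf x)|^2\Bigr)^{1/2},
\]
and there is no ``measure-normalization'' to rescue the missing $|\mathbb F|^{-1/2}$: the right-hand side of Proposition~\ref{Pro15} is already written with counting measure on $C_{d+2}$, so $d\sigma$ never enters. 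The paper recovers the extra factor by also unfolding the \emph{point} side: it sets $P'=\{\lambda(x,\|x\|,1):x\in P,\ \lambda\in\mathbb F^*\}\subset C_{d+2}$, so that each incidence is counted $|\mathbb F|-1$ times and the prefactor becomes $\sim|\mathbb F|^{-2}$, while Cauchy--Schwarz now gives $|P'|^{1/2}\sim|\mathbb F|^{1/2}|P|^{1/2}$; the net is $|\mathbb F|^{-3/2}$. An equivalent way to fix your version in place: since $S'$ is dilation-invariant and $w'$ is constant along rays, $\widehat{w'1_{S'}}$ is homogeneous of degree $0$ on $\mathbb F^{d+2}\setminus\{0\}$, hence
\[
\sum_{p\in P}\bigl|\widehat{w'1_{S'}}(-\mathbf p)\bigr|^2=\frac{1}{|\mathbb F|-1}\sum_{\mathbf x\in P'}\bigl|\widehat{w'1_{S'}}(\mathbf x)\bigr|^2\le\frac{1}{|\mathbb F|-1}\sum_{\mathbf x\in C_{d+2}}\bigl|\widehat{w'1_{S'}}(\mathbf x)\bigr|^2,
\]
which supplies the missing $(|\mathbb F|-1)^{-1/2}\sim|\mathbb F|^{-1/2}$. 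Without this homogeneity observation (or the paper's $P'$ device) the bound is off by $|\mathbb F|^{1/2}$, which downstream would only reproduce the old $|\mathbb F|^{d/2}$ incidence bound rather than the improved $|\mathbb F|^{(d-1)/2}$ of Theorem~\ref{incidence-theorem}.
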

\begin{proof} It follows that
$$ I_w(P, S)= \sum_{\substack{x\in P, (a,r)\in S:\\ ||x-a||-r=0}} w(a,r)=\sum_{\substack{x\in P, (a,r)\in S:\\ (x, ||x||, 1)\cdot (-2a, 1, ||a||-r)=0}}w(a,r).$$

Let $P'=\{\lambda\cdot (x, ||x||, 1) \in \mathbb F^{d+2}: x\in P, \lambda \in \mathbb F^*\}.$ Since $|P'|=(|\mathbb{F}|-1)|P|,$ we have
$$ I_w(P,S)= (|\mathbb{F}|-1)^{-1} \sum_{\mathbf{x}\in P', (a,r)\in S: \mathbf{x}\cdot (-2a, 1, ||a||-r)=0} w(a,r).$$ By the orthogonality of $e$,  we have
$$ I_w(P,S)=\frac{|P'|}{|\mathbb{F}| (|\mathbb{F}|-1 )} \sum_{(a,r)\in S} w(a,r) + \frac{1}{|\mathbb{F}|(|\mathbb{F}|-1)} \sum_{\mathbf{x}\in P', (a,r)\in S}\sum_{t\in \mathbb F^*} e( \mathbf{x} \cdot t(-2a, 1, ||a||-r))w(a,r)$$ 
$$= \frac{|P|}{|\mathbb{F}|} \sum_{s\in S} w(s)  + \frac{1}{|\mathbb{F}|(|\mathbb{F}|-1)} \sum_{\mathbf{x}\in P', \mathbf{y}\in S'} e(\mathbf{x}\cdot \mathbf{y}) w'(\mathbf{y}).$$

This implies that
$$\left|I_w(P, S)- \frac{|P|}{|\mathbb{F}|} \sum_{s\in S} w(s)\right| \lesssim  |\mathbb{F}|^{-2}\sum_{\mathbf{x}\in P'} \left|\widehat{w'1_{S'}}(\mathbf{x})\right|.$$

Notice that $P'$ is a subset of the cone $C_{d+2}$ in $\mathbb F^{d+2}.$ By applying the Cauchy-Schwarz inequality, we get
$$ \left|I_w(P, S)- \frac{|P|}{|\mathbb{F}|} \sum_{s\in S} w(s)\right| \lesssim  
 |\mathbb{F}|^{-2} |P'|^{\frac{1}{2}} \left( \sum_{\mathbf{x}\in C_{d+2}} |\widehat{w'1_{S'}}(\mathbf{x})|^2\right)^{\frac{1}{2}}.$$
 Since $|P'|\le |\mathbb{F}| |P|$, we complete the proof. 
\end{proof}
\begin{remark}\label{batky}
If we apply Theorem \ref{mainthm} directly in  Proposition \ref{Pro15}, then we get the following
\begin{align*}
\left|I_w(P, S)- \frac{|P|}{|\mathbb{F}|} \sum_{s\in S} w(s)\right| &\lesssim |\mathbb{F}|^{\frac{d-2}{2}} |P|^{\frac{1}{2}}  \left(\sum_{\mathbf{y}\in S'}|w'(\mathbf{y})|^{\frac{2d+8}{d+6}} \right)^{\frac{d+6}{2d+8}}\\
&\lesssim |\mathbb{F}|^{\frac{d^2+3d-2}{2d+8}}|P|^{\frac{1}{2}}\left(\sum_{s\in S}|w(s)|^{\frac{2d+8}{d+6}}\right)^{\frac{d+6}{2d+8}}.
\end{align*}
However, based on the fact that the support of $w$ is a subset of $S$, we can bound it in a much better way by using the following observation
\begin{equation*}\sum_{\mathbf{x}\in C_{d+2}} |\widehat{w'1_{S'}}(\mathbf{x})|^2= |\mathbb{F}|^{d+2} \sum_{\mathbf{m}, \mathbf{m'}\in S'}  w'(\mathbf{m}) \overline{w'(\mathbf{m'})} C_{d+2}^{\vee}(\mathbf{m}-\mathbf{m'}),\end{equation*}
which says that the point-sphere incidence problem associated with the function $w$ on a family of spheres in $\mathbb F^d$ has a connection with the Fourier decay on the cone $C_{d+2}$ in $\mathbb F^{d+2}.$ \\
This leads us to the following result.
\end{remark}

\begin{proposition} \label{Goodsize}Let $P$ be a set of points in $\mathbb F^d$ and $S$ be a set of spheres in $\mathbb F^d.$
Then, for each complex-valued function $w$ on $S,$ the following statements hold:

\begin{enumerate}
\item If $d\equiv 2 \mod{4}$ and $|\mathbb{F}|\equiv 3 \mod{4}$, then
$$\sum_{\mathbf{x}\in C_{d+2}} |\widehat{w'1_{S'}}(\mathbf{x})|^2 
\lesssim \left\{ \begin{array}{ll} |\mathbb{F}|^{d+3} \sum\limits_{s\in S} |w(s)|^2 \quad &\mbox{for}\quad |\mathbb{F}|^{\frac{d+2}{2}}\le |S| \le |\mathbb{F}|^{d+1}\\
                              |\mathbb{F}|^{\frac{d+4}{2}} |S|\sum\limits_{s\in S}  |w(s)|^2  \quad &\mbox{for}\quad |\mathbb{F}|^{\frac{d}{2}}\le |S| \le |\mathbb{F}|^{\frac{d+2}{2}}\\
                             |\mathbb{F}|^{d+2} \sum\limits_{s\in S}  |w(s)|^2  \quad &\mbox{for}\quad 1\le |S| \le |\mathbb{F}|^{\frac{d}{2}}. \end{array}\right. 
                                                  $$
\item If $d\equiv 0 \mod{4},$ or $d$ is even and $|\mathbb{F}|\equiv 1 \mod{4}$, then
$$\sum_{\mathbf{x}\in C_{d+2}} |\widehat{w'1_{S'}}(\mathbf{x})|^2
\lesssim \left\{ \begin{array}{ll} |\mathbb{F}|^{d+3} \sum\limits_{s\in S}  |w(s)|^2\quad &\mbox{for}\quad |\mathbb{F}|^{\frac{d}{2}}\le |S| \le |\mathbb{F}|^{d+1}\\
                              |\mathbb{F}|^{\frac{d+6}{2}} |S|\sum\limits_{s\in S}  |w(s)|^2 \quad &\mbox{for}\quad |\mathbb{F}|^{\frac{d-2}{2}}\le |S| \le |\mathbb{F}|^{\frac{d}{2}}\\
                             |\mathbb{F}|^{d+2} \sum\limits_{s\in S}  |w(s)|^2\quad &\mbox{for}\quad 1\le |S| \le |\mathbb{F}|^{\frac{d-2}{2}}. \end{array}\right.                      $$

\item If $d\ge 3$ is an odd integer, then
$$\sum_{\mathbf{x}\in C_{d+2}} |\widehat{w'1_{S'}}(\mathbf{x})|^2
\lesssim \left\{ \begin{array}{ll} |\mathbb{F}|^{d+3} \sum\limits_{s\in S}  |w(s)|^2 \quad &\mbox{for}\quad |\mathbb{F}|^{\frac{d+1}{2}}\le |S| \le |\mathbb{F}|^{d+1}\\
                              |\mathbb{F}|^{\frac{d+5}{2}} |S|\sum\limits_{s\in S}  |w(s)|^2 \quad &\mbox{for}\quad |\mathbb{F}|^{\frac{d-1}{2}}\le |S| \le |\mathbb{F}|^{\frac{d+1}{2}}\\
                             |\mathbb{F}|^{d+2} \sum\limits_{s\in S}  |w(s)|^2 \quad &\mbox{for}\quad 1\le |S| \le |\mathbb{F}|^{\frac{d-1}{2}}. \end{array}\right.                      $$
\end{enumerate}

\end{proposition}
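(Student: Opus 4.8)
The plan is to evaluate the sum $\sum_{\mathbf{x}\in C_{d+2}} |\widehat{w'1_{S'}}(\mathbf{x})|^2$ directly via the identity noted in Remark \ref{batky}, namely
$$\sum_{\mathbf{x}\in C_{d+2}} |\widehat{w'1_{S'}}(\mathbf{x})|^2= |\mathbb{F}|^{d+2} \sum_{\mathbf{m}, \mathbf{m'}\in S'} w'(\mathbf{m}) \overline{w'(\mathbf{m'})}\, C_{d+2}^{\vee}(\mathbf{m}-\mathbf{m'}),$$
and then to split the double sum according to whether $\mathbf{m}-\mathbf{m'}$ lies in the dual variety $C_{d+2}^*$ or not, using the explicit values of $C_{d+2}^\vee$ from Proposition \ref{lem4.1} (and its sharpened version Lemma \ref{Luck} when $d+2\equiv 0\bmod 4$ and $|\mathbb{F}|\equiv 3\bmod 4$, i.e.\ case (1)). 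In each case $C_{d+2}^\vee$ takes only a few explicit real (or, in the odd case, $\pm$-modulus) values, so the estimate reduces to bounding (a) the diagonal contribution $|\mathbb{F}|^{d+2}\cdot\frac{1}{|\mathbb{F}|}\sum_{\mathbf{m}\in S'}|w'(\mathbf{m})|^2$ coming from the $\delta_{\mathbf 0}$ term, (b) the off-diagonal ``$x\in C_{d+2}^*$'' term weighted by roughly $|\mathbb{F}|^{-(d+2)/2}$ or $|\mathbb{F}|^{-(d+4)/2}$, and (c) the ``$x\notin C_{d+2}^*$'' term weighted by roughly $|\mathbb{F}|^{-(d+4)/2}$ or $|\mathbb{F}|^{-(d+2)/2}$.

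The key structural point, already used in the proof of Lemma \ref{WL2}, is that $|S'|\sim |\mathbb{F}|\,|S|$ (the lines through the origin in $S'$ are pairwise disjoint, each contributing $|\mathbb{F}|-1$ points, and $w'$ is constant along each such line with value $w(a,r)$), so $\sum_{\mathbf{m}\in S'}|w'(\mathbf{m})|^2\sim |\mathbb{F}|\sum_{s\in S}|w(s)|^2$. I would first reduce, via Cauchy--Schwarz or the trivial bound $\big|\sum_{\mathbf m,\mathbf m'} w'(\mathbf m)\overline{w'(\mathbf m')} c(\mathbf m-\mathbf m')\big|\le \sum_{\mathbf m,\mathbf m'}|w'(\mathbf m)||w'(\mathbf m')||c(\mathbf m-\mathbf m')|$, and then further to $\le \frac12\sum_{\mathbf m,\mathbf m'}(|w'(\mathbf m)|^2+|w'(\mathbf m')|^2)|c(\mathbf m-\mathbf m')|$, so that everything is controlled by $\sum_{\mathbf m\in S'}|w'(\mathbf m)|^2\cdot \max_{\mathbf m\in S'}\sum_{\mathbf m'\in S'}|C_{d+2}^\vee(\mathbf m-\mathbf m')|$. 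The diagonal contributes $\sim |\mathbb{F}|^{d+2}\cdot|\mathbb{F}|^{-1}=|\mathbb{F}|^{d+1}$ per unit of $\sum_{\mathbf m}|w'(\mathbf m)|^2$, hence $\sim |\mathbb{F}|^{d+2}\sum_{s\in S}|w(s)|^2$. For the off-diagonal part one has two competing estimates: bounding $|C_{d+2}^\vee(\mathbf m-\mathbf m')|$ by its maximum off-origin value (of order $|\mathbb{F}|^{-(d+2)/2}$ in case (1) by Lemma \ref{Luck}, and $|\mathbb{F}|^{-(d+1)/2}$ or $|\mathbb{F}|^{-d/2}$ in the other cases via Corollary \ref{cor4.3}) times $|S'|\sim|\mathbb{F}||S|$ gives the middle-range bound, while bounding it by $|\mathbb{F}|^{-(d+4)/2}$ (the value away from $C_{d+2}^*$ in case (1)) times $|S'|^2$ gives a bound that wins in the large-$|S|$ range; the crossover between these two happens exactly at $|S|\sim|\mathbb{F}|^{(d+2)/2}$, $|\mathbb{F}|^{d/2}$, $|\mathbb{F}|^{(d+1)/2}$ respectively, producing the three regimes displayed in each part of the proposition. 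This is structurally identical to the three-regime split in Lemma \ref{WL2}, with $|S'|$ playing the role of $|G|$.

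The main subtlety — and the step I expect to require care rather than ingenuity — is handling the contribution of pairs $\mathbf m,\mathbf m'\in S'$ with $\mathbf m-\mathbf m'\in C_{d+2}^*$ in the non-lucky cases (2) and (3), where $C_{d+2}^\vee$ on $C_{d+2}^*$ is larger (of order $|\mathbb{F}|^{-(d+2)/2}$, resp.\ $\delta_{\mathbf 0}/|\mathbb{F}|$ plus a comparable term) than its off-$C_{d+2}^*$ value. Here one cannot simply replace $|C_{d+2}^\vee|$ by its off-origin max everywhere; instead one separates the genuine diagonal $\mathbf m=\mathbf m'$ (giving the $|\mathbb{F}|^{d+2}\sum|w(s)|^2$ term) from the remaining $\mathbf m\ne\mathbf m'$ with $\mathbf m-\mathbf m'\in C_{d+2}^*$, and for the latter uses the crude count $\#\{(\mathbf m,\mathbf m')\in S'\times S':\ \mathbf m\ne\mathbf m'\}\le |S'|^2\sim|\mathbb{F}|^2|S|^2$ together with $|C_{d+2}^\vee|\lesssim|\mathbb{F}|^{-d/2}$ (even/odd cases) — this is what forces the slightly worse middle exponents $|\mathbb{F}|^{(d+6)/2}$ and $|\mathbb{F}|^{(d+5)/2}$ in (2) and (3) compared to $|\mathbb{F}|^{(d+4)/2}$ in (1). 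In case (1) the sign gain from Lemma \ref{Gsign}/\ref{Luck} makes the $C_{d+2}^*$-value and the generic value comparable in absolute size (both $\lesssim|\mathbb{F}|^{-(d+2)/2}$), which is precisely why that case is better and why Lemma \ref{Luck} is invoked there. Throughout, one must also check that the negative-sign term in Lemma \ref{Luck} only helps (it can be dropped when taking absolute values in an upper bound, exactly as in the estimate of $M_2$ in the proof of Lemma \ref{WL2}), so no cancellation across the $w'$ weights needs to be exploited — the bounds are genuinely pointwise-in-$|C_{d+2}^\vee|$ estimates, which is what makes them valid for arbitrary complex-valued $w$.
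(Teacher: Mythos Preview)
Your overall architecture (expand via $C_{d+2}^\vee$, split by the dual variety, use $\sum_{\mathbf m\in S'}|w'(\mathbf m)|^2\sim|\mathbb F|\sum_{s\in S}|w(s)|^2$, compare with Plancherel) is the same as the paper's, and it does produce the bounds in parts (2) and (3). But there is a genuine gap in your treatment of part (1), and it stems from a misreading of what Lemma~\ref{Luck} actually buys.

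You write that in case (1) ``the sign gain from Lemma~\ref{Gsign}/\ref{Luck} makes the $C_{d+2}^*$-value and the generic value comparable in absolute size (both $\lesssim|\mathbb F|^{-(d+2)/2}$)'' and conclude that the bounds are ``genuinely pointwise-in-$|C_{d+2}^\vee|$ estimates, which is what makes them valid for arbitrary complex-valued $w$.'' This is not correct. By Corollary~\ref{cor4.3} the \emph{absolute values} of $C_{d+2}^\vee$ off the origin are identical in cases (1) and (2): the $C_{d+2}^*$-value has modulus $\sim|\mathbb F|^{-(d+2)/2}$ and the generic value has modulus $\sim|\mathbb F|^{-(d+4)/2}$, regardless of the sign of $\mathcal G_1^{d}$. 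If you bound by $|C_{d+2}^\vee|$ as you propose, the dominant off-diagonal contribution in case (1) is $|\mathbb F|^{d+2}\cdot|\mathbb F|^{-(d+2)/2}\cdot|S'|\sum_{\mathbf m}|w'(\mathbf m)|^2\sim|\mathbb F|^{(d+6)/2}|S|\sum_s|w(s)|^2$, i.e.\ exactly the case (2) bound, not the claimed $|\mathbb F|^{(d+4)/2}|S|$.

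What distinguishes case (1) is the \emph{sign}: the large $C_{d+2}^*$-value is negative there, so the entire sum $\sum_{\mathbf m-\mathbf m'\in C_{d+2}^*,\,\mathbf m\ne\mathbf m'} w'(\mathbf m)\overline{w'(\mathbf m')}\,C_{d+2}^\vee(\mathbf m-\mathbf m')$ is $\le 0$ and can be discarded---\emph{provided} $w'(\mathbf m)\overline{w'(\mathbf m')}\ge 0$ for all $\mathbf m,\mathbf m'$. For arbitrary complex $w$ this fails (the product can have any phase), so you cannot drop the term, and your ``no cancellation across the $w'$ weights needs to be exploited'' is exactly backwards. The paper handles this by first reducing to real $w$ (write $w=w_1+iw_2$) and then to non-negative $w$ (write $w_j=w_j^+-w_j^-$), checking that both the left side and the right side behave well under these decompositions; only then is the sign argument legitimate. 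This reduction is the missing ingredient in your proposal. A smaller point: the top-range bound $|\mathbb F|^{d+3}\sum_s|w(s)|^2$ is obtained from Plancherel (extend the sum to all of $\mathbb F_*^{d+2}$), not from the off-$C_{d+2}^*$ value of $C_{d+2}^\vee$ as you suggest.
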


\begin{proof}  Without loss of generality, we may assume that the function $w$  is  real-valued  since  a general complex-valued function $w$ can be written by $w=w_1+ i w_2$ for some real-valued functions $w_1, w_2$ on $S,$  and it satisfies that 
$$\sum_{\mathbf{x}\in C_{d+2}} |\widehat{w'1_{S'}}(\mathbf{x})|^2 \lesssim \sum_{\mathbf{x}\in C_{d+2}} |\widehat{w_1'1_{S'}}(\mathbf{x})|^2 + \sum_{\mathbf{x}\in C_{d+2}} |\widehat{w_2'1_{S'}}(\mathbf{x})|^2$$ and 
$$ \max\left\{ \sum\limits_{s\in S}  |w_1(s)|^2,~ \sum\limits_{s\in S}  |w_2(s)|^2\right\} \le \sum\limits_{s\in S}  |w(s)|^2.$$

 Furthermore, without loss of generality, we may assume that
 the function $w$  is  non-negative.  To see this, notice that a  real-valued function $w$ can be written by
 $w=w^+-w^{-}$, where $w^+$ and $w^{-}$ are non-negative real-valued functions on $S$ defined as follows:
 $$ w^+(s)=\left\{\begin{array}{ll} w(s)\quad &\mbox{if}~~w(s)\ge 0\\
 0 \quad &\mbox{if}~~w(s)<0,\end{array}\right. \quad{and}\quad w^-(s)=\left\{\begin{array}{ll} 0 \quad &\mbox{if}~~w(s)\ge 0\\
 |w(s)| \quad &\mbox{if}~~w(s)<0.\end{array}\right.$$
In addition, observe  $$\sum_{\mathbf{x}\in C_{d+2}} |\widehat{w'1_{S'}}(\mathbf{x})|^2 \lesssim \sum_{\mathbf{x}\in C_{d+2}} |\widehat{(w^+)'1_{S'}}(\mathbf{x})|^2 + \sum_{\mathbf{x}\in C_{d+2}} |\widehat{(w^-)'1_{S'}}(\mathbf{x})|^2,$$ 
and 
$$ \max\left\{ \sum\limits_{s\in S}  |w^+(s)|^2,~ \sum\limits_{s\in S}  |w^-(s)|^2\right\} \le \sum\limits_{s\in S}  |w(s)|^2.$$

Hence, it suffices to prove the proposition by assuming that $w(s)\ge 0$ for all $s\in S$ and so $w'\ge 0.$ Let us prove the first part of the proposition. Assume that $d\equiv 2 \mod{4}$ and $|\mathbb{F}|\equiv 3 \mod{4}.$
By definitions of the Fourier transform  and the inverse Fourier transform, we have
$$\sum_{\mathbf{x}\in C_{d+2}} |\widehat{w'1_{S'}}(\mathbf{x})|^2= |\mathbb{F}|^{d+2} \sum_{\mathbf{m}, \mathbf{m'}\in S'}  w'(\mathbf{m}) \overline{w'(\mathbf{m'})} C_{d+2}^{\vee}(\mathbf{m}-\mathbf{m'})  $$
$$ =|\mathbb{F}|^{d+2} \sum_{\substack{\mathbf{m}, \mathbf{m'}\in S'\\:\mathbf{m}-\mathbf{m'}\in C_{d+2}^* }}w'(\mathbf{m}) \overline{w'(\mathbf{m'})} C_{d+2}^{\vee}(\mathbf{m}-\mathbf{m'}) 
+|\mathbb{F}|^{d+2} \sum_{\substack{\mathbf{m}, \mathbf{m'}\in S'\\
:\mathbf{m}-\mathbf{m'}\notin C_{d+2}^*} } w'(\mathbf{m}) \overline{w'(\mathbf{m'})} C_{d+2}^{\vee}(\mathbf{m}-\mathbf{m'}).$$
Since $|\mathbb{F}|\equiv 3 \mod{4}$ and $d\equiv 2 \mod{4}$,  it follows from Lemma \ref{ExplicitGauss}  that 
$$ \mathcal{G}_1^{d}= -|\mathbb{F}|^{\frac{d}{2}}.$$
Hence, by Lemma \ref{lem4.1} with $n=d+2$, we have 
$$ C_{d+2}^\vee(\mathbf{m}-\mathbf{m'})= \left\{\begin{array}{ll} \frac {\delta_{{\bf 0}}(\mathbf{m}-\mathbf{m'})}{|\mathbb{F}|}-\frac{(|\mathbb{F}|-1)|\mathbb{F}|^{\frac{d}{2}}}{|\mathbb{F}|^{d+2}} ~~ &\mbox{if} ~~\mathbf{m}-\mathbf{m'}\in C_{d+2}^* \\
\frac{|\mathbb{F}|^{\frac{d}{2}}}{|\mathbb{F}|^{d+2}}  ~~ &\mbox{if}~~ \mathbf{m}-\mathbf{m'}\notin C_{d+2}^*. \end{array} \right.$$
Inserting this estimate into the above equality,  we have
$$\sum_{\mathbf{x}\in C_{d+2}} |\widehat{w'1_{S'}}(\mathbf{x})|^2 =|\mathbb{F}|^{d+2} \sum_{\substack{\mathbf{m}, \mathbf{m'}\in S':\\
\mathbf{m}-\mathbf{m'}\in C_{d+2}^*}}w'(\mathbf{m}) \overline{w'(\mathbf{m'})} \left(\frac {\delta_{{\bf 0}}(\mathbf{m}-\mathbf{m'})}{|\mathbb{F}|}-\frac{(|\mathbb{F}|-1)|\mathbb{F}|^{\frac{d}{2}}}{|\mathbb{F}|^{d+2}}\right)$$
$$+|\mathbb{F}|^{d+2} \sum_{\substack{\mathbf{m}, \mathbf{m'}\in S':\\\mathbf{m}-\mathbf{m'}\notin C_{d+2}^*} }w'(\mathbf{m}) \overline{w'(\mathbf{m'})}\frac{|\mathbb{F}|^{\frac{d}{2}}}{|\mathbb{F}|^{d+2}}. $$
Simplifying the above sums,  this value is the same as
 $$|\mathbb{F}|^{d+1}\sum_{\mathbf{m}\in S'} |w'(\mathbf{m})|^2 -(|\mathbb{F}|-1)|\mathbb{F}|^{\frac{d}{2}}\sum_{\substack{\mathbf{m}, \mathbf{m'}\in S':\\
\mathbf{m}-\mathbf{m'}\in C_{d+2}^*}} w'(\mathbf{m}) \overline{w'(\mathbf{m'})}  +|\mathbb{F}|^{\frac{d}{2}} \sum_{\substack{\mathbf{m}, \mathbf{m'}\in S':\\\mathbf{m}-\mathbf{m'}\notin C_{d+2}^*} }w'(\mathbf{m}) \overline{w'(\mathbf{m'})}. $$
Note that the second term above is negative since $w'$ is a non-negative function. Hence,  we obtain that
  \begin{equation}\label{bor1}\sum_{\mathbf{x}\in C_{d+2}} |\widehat{w'1_{S'}}(\mathbf{x})|^2 \le |\mathbb{F}|^{d+1}\sum_{\mathbf{m}\in S'} |w'(\mathbf{m})|^2 + |\mathbb{F}|^{\frac{d}{2}} \left|\sum_{\mathbf{m}\in S'} w'(\mathbf{m})\right|^2.\end{equation}
We now apply the Cauchy-Schwarz inequality to the second term above and notice by the definition of $w'$ that 
$$ \sum_{\mathbf{m}\in S'} |w'(\mathbf{m})|^2 = (|\mathbb{F}|-1) \sum_{s\in S} w^2(s).$$ 
   Then we have
  $$ \sum_{\mathbf{x}\in C_{d+2}} |\widehat{w'1_{S'}}(\mathbf{x})|^2  \le \left(|\mathbb{F}|^{d+2} + |\mathbb{F}|^{\frac{d+4}{2}} |S| \right)\sum_{s\in S} w^2(s).$$
 On the other hand, we also obtain by Plancherel's theorem that
  \begin{equation}\label{Plancherel}\sum_{\mathbf{x}\in C_{d+2}} |\widehat{w'1_{S'}}(\mathbf{x})|^2  \le \sum_{\mathbf{x}\in \mathbb F^{d+2}} |\widehat{w'1_{S'}}(\mathbf{x})|^2  \le |\mathbb{F}|^{d+3} \sum\limits_{s\in S} w^2(s).\end{equation}
 By a direct computation, it is not hard to see that  the above two inequalities imply the first part of the proposition, as required.
 
To prove  the second part of the proposition,  we notice from Lemma \ref{ExplicitGauss}   that if $d\equiv 0 \mod{4}$ or $|\mathbb{F}|\equiv 1 \mod{4},$ then
$\mathcal{G}_1^{d}= |\mathbb{F}|^{\frac{d}{2}}.$ Following the same argument as in the proof of the first part of the proposition, we see that 
$\sum_{\mathbf{x}\in C_{d+2}} |\widehat{w'1_{S'}}(\mathbf{x})|^2$ is written as the following:
$$|\mathbb{F}|^{d+1}\sum_{\mathbf{m}\in S'} |w'(\mathbf{m})|^2 +(|\mathbb{F}|-1)|\mathbb{F}|^{\frac{d}{2}}\sum_{\substack{\mathbf{m}, \mathbf{m'}\in S':\\
\mathbf{m}-\mathbf{m'}\in C_{d+2}^*}} w'(\mathbf{m}) \overline{w'(\mathbf{m'})}  - \sum_{\substack{\mathbf{m}, \mathbf{m'}\in S':\\\mathbf{m}-\mathbf{m'}\notin C_{d+2}^*} }w'(\mathbf{m}) \overline{w'(\mathbf{m'})}|\mathbb{F}|^{\frac{d}{2}}. $$
In this case,  the third term above is negative and thus  we have
$$\sum_{\mathbf{x}\in C_{d+2}} |\widehat{w'1_{S'}}(\mathbf{x})|^2\le |\mathbb{F}|^{d+1}\sum_{\mathbf{m}\in S'} |w'(\mathbf{m})|^2 +(|\mathbb{F}|-1)|\mathbb{F}|^{\frac{d}{2}}\left|\sum_{\mathbf{m}\in S'} w'(\mathbf{m})\right|^2.$$
Proceeding as in \eqref{bor1} gives us 
$$ \sum_{\mathbf{x}\in C_{d+2}} |\widehat{w'1_{S'}}(\mathbf{x})|^2  \le \left(|\mathbb{F}|^{d+2} + |\mathbb{F}|^{\frac{d+6}{2}} |S| \right)\sum_{s\in S} w^2(s).$$
From this estimate and \eqref{Plancherel}, we obtain the statement of the second part of the proposition.

To prove the third part of the proposition, we notice from Proposition \ref{lem4.1} that  if $d\ge 3$ is odd, then
$|C_{d+2}^{\vee}(\alpha)|\le |\mathbb{F}|^{-\frac{d+3}{2}} $ for $\alpha \in \mathbb F^{d+2}\setminus \{\mathbf{0}\}$ and 
$C_{d+2}^{\vee}(\mathbf{0})=\frac{1}{|\mathbb{F}|}.$ Thus, we see that
\begin{align*}\sum_{\mathbf{x}\in C_{d+2}} |\widehat{w'1_{S'}}(\mathbf{x})|^2&= |\mathbb{F}|^{d+2} \sum_{\mathbf{m}, \mathbf{m'}\in S'}  w'(\mathbf{m}) \overline{w'(\mathbf{m'})} C_{d+2}^{\vee}(\mathbf{m}-\mathbf{m'})\\
&\le  |\mathbb{F}|^{d+1} \sum_{\mathbf{m}\in S'} |w'(\mathbf{m})|^2  + |\mathbb{F}|^{\frac{d+1}{2}} \left(\sum_{\mathbf{m}\in S'} |w'(\mathbf{m})|\right)^2\\
&\le \left(|\mathbb{F}|^{d+2} + |\mathbb{F}|^{\frac{d+5}{2}} |S| \right)\sum_{s\in S} w^2(s),\end{align*}
where the last inequality is obtained by the Cauchy-Schwarz inequality  and the definition of $w'$. 
It follows from a direct computation that  this estimate and \eqref{Plancherel} imply the conclusion of the third part of the proposition.
\end{proof}

\subsection{Proof of Theorem \ref{incidence-theorem}}
A combination of Propositions \ref{Goodsize} and \ref{Pro15} directly yields the following point-sphere incidence estimates, a special case of which is  Theorem \ref{incidence-theorem}.

\begin{theorem} 
Let $P$ be a set of points in $\mathbb F^d$ and $S$ be a set of spheres in $\mathbb F^d.$ Suppose that $w$ is a complex-valued function on $S.$ Then the following statements hold:
\begin{enumerate}
\item If $d\equiv 2 \mod{4}$ and $|\mathbb F|\equiv 3 \mod{4}$, then we have
$$\left|I_w(P, S)-|\mathbb F|^{-1} |P|\sum\limits_{s\in S} w(s) \right|\lesssim \left\{ \begin{array}{ll} |\mathbb F|^{\frac{d}{2}}\sqrt{|P|} \sqrt{\sum\limits_{s\in S} |w(s)|^2}\quad &\mbox{if}\quad |\mathbb F|^{\frac{d+2}{2}}\le |S| \le |\mathbb F|^{d+1}\\
                              |\mathbb F|^{\frac{d-2}{4}} \sqrt{|P|}\sqrt{|S|}\sqrt{\sum\limits_{s\in S} |w(s)|^2} \quad &\mbox{if}\quad |\mathbb F|^{\frac{d}{2}}\le |S| \le |\mathbb F|^{\frac{d+2}{2}}\\
                             |\mathbb F|^{\frac{d-1}{2}}\sqrt{|P|} \sqrt{\sum\limits_{s\in S} |w(s)|^2}  \quad &\mbox{if}\quad 1\le |S| \le |\mathbb F|^{\frac{d}{2}}. \end{array}\right. $$

\item If $d\equiv 0 \mod{4},$ or $d$ is even and $|\mathbb F|\equiv 1 \mod{4}$, then
$$\left|I_w(P, S)-|\mathbb F|^{-1} |P|\sum\limits_{s\in S} w(s) \right|\lesssim  \left\{ \begin{array}{ll} |\mathbb F|^{\frac{d}{2}}\sqrt{|P|} \sqrt{\sum\limits_{s\in S} |w(s)|^2} \quad &\mbox{for}\quad |\mathbb F|^{\frac{d}{2}}\le |S| \le |\mathbb F|^{d+1}\\
                              |\mathbb F|^{\frac{d}{4}} \sqrt{|P|}\sqrt{|S|}\sqrt{\sum\limits_{s\in S} |w(s)|^2} \quad &\mbox{for}\quad |\mathbb F|^{\frac{d-2}{2}}\le |S| \le |\mathbb F|^{\frac{d}{2}}\\
                            |\mathbb F|^{\frac{d-1}{2}}\sqrt{|P|} \sqrt{\sum\limits_{s\in S} |w(s)|^2}   \quad &\mbox{for}\quad 1\le |S| \le |\mathbb F|^{\frac{d-2}{2}}. \end{array}\right.$$                   

\item  If $d\ge 3$ is an odd integer, then
$$\left|I_w(P, S)-|\mathbb F|^{-1} |P|\sum\limits_{s\in S} w(s) \right|\lesssim \left\{ \begin{array}{ll} |\mathbb F|^{\frac{d}{2}}\sqrt{|P|} \sqrt{\sum\limits_{s\in S} |w(s)|^2} \quad &\mbox{for}\quad |\mathbb F|^{\frac{d+1}{2}}\le |S| \le |\mathbb F|^{d+1}\\
                              |\mathbb F|^{\frac{d-1}{4}} \sqrt{|P|}\sqrt{|S|}\sqrt{\sum\limits_{s\in S} |w(s)|^2} \quad &\mbox{for}\quad |\mathbb F|^{\frac{d-1}{2}}\le |S| \le |\mathbb F|^{\frac{d+1}{2}}\\
                              |\mathbb F|^{\frac{d-1}{2}}\sqrt{|P|} \sqrt{\sum\limits_{s\in S} |w(s)|^2}  \quad &\mbox{for}\quad 1\le |S| \le |\mathbb F|^{\frac{d-1}{2}}. \end{array}\right. $$            
\end{enumerate}
\end{theorem}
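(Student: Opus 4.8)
The plan is to read off this theorem by composing the two results already proved in this section: Proposition \ref{Pro15} turns the deviation $\big|I_w(P,S) - |\mathbb{F}|^{-1}|P|\sum_{s\in S}w(s)\big|$ into the quantity $|\mathbb{F}|^{-3/2}|P|^{1/2}\big(\sum_{\mathbf{x}\in C_{d+2}}|\widehat{w'1_{S'}}(\mathbf{x})|^2\big)^{1/2}$, and Proposition \ref{Goodsize} bounds the inner sum $\sum_{\mathbf{x}\in C_{d+2}}|\widehat{w'1_{S'}}(\mathbf{x})|^2$ in each arithmetic regime (depending on $d \bmod 4$ and $|\mathbb{F}| \bmod 4$) and in each of the three windows for $|S|$. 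So the only work left is substitution and collecting powers of $|\mathbb{F}|$.

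Concretely, for case (1) ($d\equiv 2\bmod 4$, $|\mathbb{F}|\equiv 3\bmod 4$) I would take square roots of the three bounds in Proposition \ref{Goodsize}(1) and multiply by $|\mathbb{F}|^{-3/2}|P|^{1/2}$: this gives $|\mathbb{F}|^{-3/2}\big(|\mathbb{F}|^{d+3}\big)^{1/2}=|\mathbb{F}|^{d/2}$ on $|\mathbb{F}|^{(d+2)/2}\le|S|\le|\mathbb{F}|^{d+1}$, then $|\mathbb{F}|^{-3/2}\big(|\mathbb{F}|^{(d+4)/2}|S|\big)^{1/2}=|\mathbb{F}|^{(d-2)/4}|S|^{1/2}$ on $|\mathbb{F}|^{d/2}\le|S|\le|\mathbb{F}|^{(d+2)/2}$, and $|\mathbb{F}|^{-3/2}\big(|\mathbb{F}|^{d+2}\big)^{1/2}=|\mathbb{F}|^{(d-1)/2}$ on $1\le|S|\le|\mathbb{F}|^{d/2}$; these are exactly the three displayed right-hand sides. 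Cases (2) and (3) are the same computation with Proposition \ref{Goodsize}(2) and (3); for instance $|\mathbb{F}|^{-3/2}\big(|\mathbb{F}|^{(d+6)/2}|S|\big)^{1/2}=|\mathbb{F}|^{d/4}|S|^{1/2}$ and $|\mathbb{F}|^{-3/2}\big(|\mathbb{F}|^{(d+5)/2}|S|\big)^{1/2}=|\mathbb{F}|^{(d-1)/4}|S|^{1/2}$ produce the middle-window exponents there.

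Because both input propositions are already established, there is no real obstacle; the one point needing care is the bookkeeping of the $|S|$-thresholds. Here the windows in Proposition \ref{Goodsize} coincide with those in the statement, the estimates agree up to constants at each shared endpoint (the $\sqrt{|S|}$ factor of the middle window evaluated at a threshold reproduces the constant-in-$|S|$ bound of the neighbouring window), and hence no interpolation between windows is ever required. Finally, Theorem \ref{incidence-theorem} is obtained simply by retaining only the smallest-$|S|$ window of each part, where the $|\mathbb{F}|$-exponent is uniformly $(d-1)/2$ and the dependence on $|S|$ drops out.
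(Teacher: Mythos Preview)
Your proposal is correct and is exactly the paper's own proof: the theorem is stated as a direct combination of Proposition \ref{Pro15} with Proposition \ref{Goodsize}, and your exponent computations in each window match the displayed bounds. No additional argument is needed.
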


\begin{remark}  If the sphere set is of very large size, say bigger than $|\mathbb{F}|^{{(d+2)}/{2}}, ~ |\mathbb{F}|^{{d}/{2}}, ~ |\mathbb{F}|^{{d+1}/{2}}$, we recover the result given  by Cilleruelo, Iosevich, Lund, Roche-Newton, and Rudnev \cite{CILRR17}, and independently by Pham, Phuong and Vinh \cite{PPV}. When $S$ has medium size, we obtain the error terms $|\mathbb{F}|^{{(d-2)}/{4}}|P|^{1/2}|S|,$ $~|\mathbb{F}|^{{d}/{4}}|P|^{1/2}|S|, ~|\mathbb{F}|^{{(d-1)}/{4}}|P|^{1/2}|S|$ in corresponding cases which are much better than the Cauchy-Schwarz bound $|\mathbb{F}|^{{(d-2)}/{2}}|P|^{1/2}|S|$ attained by using the fact that any two distinct spheres intersect in at most $\lesssim |\mathbb{F}|^{d-2}$ elements. It is necessary to mention that for sphere sets of medium size, our results not only give better upper bounds but also tell us about lower bounds. 
\end{remark}
\section{Constructions and Remarks}\label{moreds}
\subsection{Sharpness of Theorem \ref{incidence-theorem}}\label{chat}
We start with the following simple lemma. 
\begin{lemma} \label{example1} Let $d=4\ell+2$ for $\ell\in \mathbb N$ and $|\mathbb F|\equiv 3 \mod{4}.$ 
There exist a set $P$ of points in $\mathbb F^d$ and a set $S$ of spheres in $\mathbb F^d$ such that $|S|\sim |\mathbb F|^{d/2}, ~|P||S|\sim |\mathbb F|^{d+1},$ and $I(P, S)=0.$
\end{lemma}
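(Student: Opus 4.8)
The plan is to exhibit sets $P$ and $S$ with $I(P,S)=0$, meaning no point of $P$ lies on any sphere of $S$. Since the incidence condition $x\in S_d(a,r)$ reads $\|x-a\|=r$, the natural strategy is to force the quantity $\|x-a\|$ to avoid a prescribed set of ``forbidden radii'' for every $x\in P$ and every center $a$ coming from $S$. The cleanest way to do this is to pick all spheres in $S$ to have the \emph{same} radius $r_0$ and centers ranging over a set $A\subset\mathbb F^d$, so that $S=\{S_d(a,r_0): a\in A\}$ with $|S|=|A|$; then $I(P,S)=0$ becomes the single condition that $\|x-a\|\neq r_0$ for all $x\in P$, $a\in A$, i.e.\ the difference set $P-A$ misses the sphere $\{y:\|y\|=r_0\}$. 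Taking $r_0=0$, we need $P-A$ to avoid the zero-radius sphere $S_d(\mathbf 0,0)=\{y\in\mathbb F^d:\|y\|=0\}$.

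**Using an isotropic subspace.** Here is where the hypotheses $d=4\ell+2$ and $|\mathbb F|\equiv 3\bmod 4$ enter. By Lemma \ref{Vi} (with the role of $n-2$ played by $d$), since $d\equiv 2\bmod 4$ and $-1$ is a non-square (so $\eta(-1)=-1$ and $(\eta(-1))^{d/2}=(-1)^{\ell+1}$ — one checks this equals $+1$ precisely in the relevant case, or one simply invokes that $C_{d+2}$, hence $\mathbb F^d$, contains a totally isotropic subspace of the stated dimension), there is a subspace $H\subseteq\{y\in\mathbb F^d:\|y\|=0\}$ with $|H|\sim|\mathbb F|^{d/2}$. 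Actually it is even simpler to invoke Corollary \ref{nekocor}(2): the cone $C_{d+2}$ contains a subspace of size $|\mathbb F|^{(d+2)/2}$, which after stripping the cone coordinates gives an isotropic subspace $H\subset\mathbb F^d$ of size $|\mathbb F|^{d/2}$. Now take $A=H$ (so $S=\{S_d(a,0):a\in H\}$, giving $|S|\sim|\mathbb F|^{d/2}$) and take $P$ to be a complete set of coset representatives for $H$ in $\mathbb F^d$ that avoids the ``bad'' cosets; more precisely, let $P$ be a union of cosets $x+H$ chosen so that $\|x+h\|\neq 0$ for all $h\in H$ — equivalently, cosets on which the quadratic form $\|\cdot\|$ never vanishes. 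Since $\|\cdot\|$ restricted to a coset $x+H$ is the affine-quadratic map $h\mapsto\|x\|+2x\cdot h$ (the $\|h\|$ term dies as $h\in H$), this map is either identically zero (if $x\cdot h=0$ for all $h\in H$ and $\|x\|=0$) or takes every value equally often or is a nonzero constant; a counting argument shows a positive proportion — in fact $\gtrsim|\mathbb F|^{d/2}$ worth — of cosets avoid $0$ entirely.

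**Counting.** With $P$ the union of all such ``good'' cosets and a suitable sub-selection, we arrange $|P|\sim|\mathbb F|^{d/2+1}$, so that $|P||S|\sim|\mathbb F|^{d/2+1}\cdot|\mathbb F|^{d/2}=|\mathbb F|^{d+1}$ as desired, while by construction $\|x-a\|=\|x-a\|\neq 0$ for every $x\in P$, $a\in H$ (since $x-a$ lies in the same coset as $x$, and that coset was chosen to avoid the zero set of $\|\cdot\|$), hence $I(P,S)=0$. One should double-check the coset count: the number of cosets $x+H$ with $x\cdot h\equiv 0$ on $H$ is $|H^\perp|/|H|=|\mathbb F|^{d}/|\mathbb F|^{d/2}/|\mathbb F|^{d/2}=1$ worth up to lower order, so almost all of the $|\mathbb F|^{d/2}$ cosets have the affine form $h\mapsto\|x\|+2x\cdot h$ genuinely non-constant, and exactly $|\mathbb F|^{d/2-1}$ of the $|\mathbb F|^{d/2}$ points in each such coset hit $0$; excluding those still leaves $\gtrsim|\mathbb F|^{d/2}\cdot|\mathbb F|^{d/2}(1-1/|\mathbb F|)\sim|\mathbb F|^d$ points, from which we select $|P|\sim|\mathbb F|^{d/2+1}$.

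**Main obstacle.** The delicate point is bookkeeping the coset analysis so that we genuinely land on $|P||S|\sim|\mathbb F|^{d+1}$ with $|S|\sim|\mathbb F|^{d/2}$ \emph{simultaneously} — one must confirm there are enough points outside the zero set of $\|\cdot\|$ within the union of $H$-cosets, and that choosing radius $0$ (rather than a generic radius) does not secretly force $I(P,S)>0$ through the isotropic structure. I expect the cleanest route is to observe that $\|\cdot\|$ vanishes on at most $\sim|\mathbb F|^{d-1}$ points of $\mathbb F^d$, so its complement has $\sim|\mathbb F|^d$ points, and then to choose $P$ inside this complement subordinate to the $H$-coset partition; the only thing to verify is that we can do this while keeping $P-H$ disjoint from $\{\|\cdot\|=0\}$, which holds automatically because $P-H$ is a union of the very cosets we selected.
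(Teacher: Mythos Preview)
Your construction has a fatal gap at its foundation: under the hypotheses $d=4\ell+2$ and $|\mathbb F|\equiv 3\bmod 4$, there is \emph{no} totally isotropic subspace $H\subset\{y\in\mathbb F^d:\|y\|=0\}$ of size $|\mathbb F|^{d/2}$. Indeed $(\eta(-1))^{d/2}=(-1)^{2\ell+1}=-1$, so Lemma~\ref{Vi} (with $n-2$ replaced by $d$) gives a maximal isotropic dimension of only $(d-2)/2$; the paper's Remark embedded in this very proof makes exactly this point and shows the opposite assumption would contradict Theorem~\ref{mainthm}. Your appeal to Corollary~\ref{nekocor}(2) is also misapplied: since $n=d+2\equiv 0\bmod 4$ and $-1$ is a non-square, the relevant case is (4), which produces a subspace of size $|\mathbb F|^{(n-2)/2}=|\mathbb F|^{d/2}$ inside $C_{d+2}$, but that subspace has the form $H'\times\mathbb F\times\{0\}$ with $|H'|=|\mathbb F|^{(d-2)/2}$, so ``stripping the cone coordinates'' returns you to the smaller subspace.

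Even granting a half-dimensional isotropic $H$, your coset argument collapses. If $|H|=|\mathbb F|^{d/2}$ then $H=H^\perp$, and for any coset $x+H$ with $x\notin H$ the map $h\mapsto \|x\|+2x\cdot h$ is a non-constant affine map on $H$ (because $x\notin H^\perp$), hence surjective onto $\mathbb F$ and in particular hits $0$; on $H$ itself $\|\cdot\|\equiv 0$. Thus no coset of $H$ avoids the zero sphere, and your $P$ would be empty. Your counting paragraph then quietly switches from ``$P$ is a union of cosets'' to ``$P$ is a set of points avoiding $\{\|\cdot\|=0\}$,'' but once $P$ is not a union of full $H$-cosets the set $P-H$ is no longer contained in $P$ and the claim $I(P,S)=0$ fails. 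The paper circumvents all of this by taking a $(d-2)/2$-dimensional isotropic subspace $B$ supported in the first $d-2$ coordinates (possible since $d-2=4\ell$), using the last two coordinates to control $\|\cdot\|$ via concentric circles in $\mathbb F^2$, and crucially allowing roughly $|\mathbb F|/2$ distinct radii in $S$ rather than the single radius $0$; this extra factor of $|\mathbb F|$ in the radius set is what brings $|S|$ up to $\sim|\mathbb F|^{d/2}$.
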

\begin{proof}
We begin by reviewing the definition of mutually orthogonal null vectors. 
Let $V$ be a set of vectors in $\mathbb F^n$. Suppose that $V=\{v_1, \ldots, v_k\}$. We say that vectors in $V$ are mutually orthogonal null vectors if $v_i\cdot v_j=0$ and $v_i\ne \mathbf{0}$ for any $1\le i, j\le k$.  If $n=4\ell$ with $\ell\in \mathbb{N}$, it has been proved by Hart, Iosevich, Koh, and Rudnev (\cite{hart}, Lemma 5.1) that there always exist $n/2$ mutually orthogonal null vectors. 

\begin{remark} We claim that when $d=4\ell+2$ and $|\mathbb F|\equiv 3\mod 4$, it is impossible to find $d/2$ mutually orthogonal null vectors in $\mathbb F^d$. 
Otherwise, we would have $d/2$ mutually orthogonal null vectors $v_1,v_2, \ldots, v_{d/2}$ in $\mathbb F^d.$ Note that  $v_i\cdot v_j=0$ for all $i, j=1,2, \ldots, d/2$. In particular, $||v_i||=v_i\cdot v_i=0$ for all $i.$
For each $i=1,2, \ldots, d/2,$ define $w_i:=(v_i, 0, 0).$ Then $w_1, w_2,\ldots, w_{d/2}$ are $d/2$ mutually orthogonal null vectors in $\mathbb F^{d+2}.$
Now define
$$ H=\mathtt{Span}(w_1, \ldots, w_{d/2})+\{(0, \ldots, 0, 0, s)\in \mathbb F^{d+2}\colon s\in \mathbb F\}.$$
Here, $\mathtt{Span}(w_1, \ldots, w_{d/2})$ denotes a set of vectors spanned by $w_1, \ldots, w_{d/2}.$ In other words,  
$$\mathtt{Span}(w_1, \ldots, w_{d/2}):=\left\{\sum_{i=1}^{d/2} a_i w_i: a_i\in \mathbb F, i=1,2, \ldots, d/2\right\}.$$
It is not hard to check that $H$ is a $(d+2)/2$ dimensional subspace lying on the cone $C_{d+2}.$ 
Letting $n=d+2$, we see that $|H|=|\mathbb F|^{n/2}$ and $H\subset C_n$.  Applying the inequality \eqref{neko1} with $p=2$ and $k=n/2$, a necessary condition for $R^*_{C_n}(2\to r)\lesssim 1$ to hold is given by the condition 
$ r\ge \frac{2n}{n-2}.$ However, this contradicts  the result of Theorem \ref{mainthm}, that is $R_{C_n}^*\left(2\to \frac{2n+4}{n}\right)\lesssim 1.$
\end{remark}

We now prove Lemma \ref{example1}. Since $d\equiv 2 \mod{4},$ we have $d-2=4\ell$ for some $\ell\in \mathbb N.$ Hence,   there are $(d-2)/2$ mutually orthogonal null vectors in $\mathbb F^{d-2}\times \{(0, 0)\},$ say $ w_1, \ldots, w_{(d-2)/2}.$ 
Let $U\subset \mathbb F^2$ be the set of all points on $(|\mathbb F|+1)/2$ concentric circles, centered at the origin, of radii in 
$\{r_i\in \mathbb F^*: i=1,2, \ldots, (|\mathbb F|+1)/2\}$. Let  $R:=\mathbb F\setminus \{r_1, \ldots, r_{(|\mathbb F|+1)/2}\}$. 
Define
\[P=\mathtt{Span}(w_1, \ldots, w_{(d-2)/2})+\{(0, \ldots, 0, x_1, x_2)\colon (x_1,x_2)\in U\},\]
where 
$$\mathtt{Span}(w_1, \ldots, w_{(d-2)/2}):=\left\{\sum_{i=1}^{(d-2)/2} a_i w_i: a_i\in \mathbb F, i=1,2, \ldots, (d-2)/2\right\}.$$
Set $B=\mathtt{Span}(w_1, \ldots, w_{(d-2)/2}),$ and  let $S$ be the set of spheres of radii in $R$, centered at points in $B.$ One can check that 
$|S|=|B||R|= |\mathbb F|^{(d-2)/2}(|\mathbb F|-1)/2 \sim |\mathbb F|^{d/2}$ and $|P|=|\mathbb F|^{{(d-2)}/{2}}\, \frac{(|\mathbb F|+1)}{2} (|\mathbb F|+1)\sim |\mathbb F|^{(d+2)/2 }.$  Hence, we have  $|P||S|\sim |\mathbb F|^{d+1}.$ It remains to show that $I(P, S)=0.$ To prove this, taking an arbitrary $p\in P$ and fixing $b\in B$, it suffices to show that $||p-b||\notin R.$  Now, fix $x\in U,$ and consider all $p=w+x$, as $w$ ranges over $B.$ Then, we have
$$ ||p-b||=||w-b||+ ||x||=0+ ||x||=||x||\notin R,$$
as required.
Here, the fact that $||w-b||=0$ follows by the following observation.
Any element of $B$ is a linear combination of mutually orthogonal null vectors $w_1, \ldots, w_{(d-2)/2}$ and  $B$ is a subspace, and hence for any $w, b\in B$, we have  $w-b\in B$ and  $0=(w-b)\cdot (w-b)=||w-b||.$
\end{proof}
It follows from the proof of Lemma \ref{example1} that we can choose a family  $S$ of spheres of much smaller size so that $|P||S|\sim |\mathbb{F}|^{d+1}$ and $I(P, S)=0$. Indeed, for instance, instead of choosing $(d-2)/2$ mutually orthogonal null vectors, we can choose $(d-6)/2$ mutually orthogonal null vectors in $\mathbb F^{d-6}\times \{(0, 0, 0, 0, 0, 0)\},$ say $ w_1, \ldots, w_{(d-6)/2}.$ This can be done because $d-6=4\ell$ for some $\ell\in \mathbb N.$

Define $B$ as a set of vectors spanned by $w_1, \ldots, w_{(d-6)/2}.$ In other words,  
$$ B:=\mathtt{Span}(w_1, \ldots, w_{(d-6)/2})=\left\{\sum_{i=1}^{(d-6)/2} a_i w_i: a_i\in \mathbb F, i=1,2, \ldots, (d-6)/2\right\}.$$
Let $U\subset \mathbb F^6$ be the set of all points on $(|\mathbb F|+1)/2$ concentric spheres, centered at the origin, of radii in 
$\{r_i\in \mathbb F^*: i=1,2, \ldots, (|\mathbb F|+1)/2\}$. Let  $R:=\mathbb F\setminus \{r_1, \ldots, r_{(|\mathbb F|+1)/2}\}$. Similarly, define
\[P=\mathtt{Span}(w_1, \ldots, w_{(d-6)/2})+\{(0, \ldots, 0, x_1, x_2, x_3, x_4, x_5, x_6)\colon (x_1,x_2, x_3, x_4, x_5, x_6)\in U\}.\]
Let $S$ be the set of spheres of radii in $R$, centered at points in $B.$ One can check that 
$|S|=|B||R|= |\mathbb F|^{(d-6)/2}(|\mathbb F|-1)/2 \sim |\mathbb{F}|^{{(d-4)}/{2}}$ and $|P|\sim|\mathbb F|^{{(d-6)}/{2}} (|\mathbb F|+1) |\mathbb F|^5,$ where we have used the fact that any sphere of non-zero radius in $\mathbb{F}^6$ contains around $|\mathbb{F}|^5$ points. It is not hard to see that $I(P, S)=0$ and $|P||S|\sim |\mathbb F|^{(d+1)}$, as required. Repeating this process,  the following theorem is attained. 
\begin{theorem}[Sharpness of Theorem \ref{incidence-theorem} (1)]
Let $d=4\ell+2$ for $\ell\in \mathbb N$ and $|\mathbb F|\equiv 3 \mod{4}.$ For any $k\in \mathbb{N}$ with $d\ge 4k$, there exist sets $S$ and $P$ in $\mathbb{F}^d$ with $|S|\sim |\mathbb{F}|^{\frac{d-4k}{2}}$ and $|P||S|\sim |\mathbb{F}|^{d+1}$ such that $I(P, S)=0$.
\end{theorem}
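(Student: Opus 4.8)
The plan is to iterate the construction in the proof of Lemma \ref{example1}, replacing the two-dimensional ``free'' block by a $2k$-dimensional one. The key arithmetic input is that $d - 4k = 4\ell - 4k = 4(\ell-k)$ is a non-negative multiple of $4$ whenever $d \ge 4k$, so by the Hart--Iosevich--Koh--Rudnev result (\cite{hart}, Lemma 5.1) one can find $(d-4k)/2$ mutually orthogonal null vectors $w_1,\ldots,w_{(d-4k)/2}$ inside the coordinate subspace $\mathbb F^{d-4k}\times\{(0,\ldots,0)\}\subset\mathbb F^d$ (the last $4k$ coordinates vanishing). As before, these span a totally isotropic subspace $B:=\mathtt{Span}(w_1,\ldots,w_{(d-4k)/2})$ of size $|B|=|\mathbb F|^{(d-4k)/2}$, on which the quadratic form $\|\cdot\|$ vanishes identically: for any $w,b\in B$ one has $w-b\in B$ and hence $\|w-b\|=(w-b)\cdot(w-b)=0$.

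Next I would split $\mathbb F^*$ into a ``radii'' set and its complement. Let $r_1,\ldots,r_{(|\mathbb F|+1)/2}$ be a choice of $(|\mathbb F|+1)/2$ elements of $\mathbb F$ (say, together with $0$ if convenient), let $R:=\mathbb F\setminus\{r_1,\ldots,r_{(|\mathbb F|+1)/2}\}$, and let $U\subset\mathbb F^{4k}$ be the union of the $(|\mathbb F|+1)/2$ concentric spheres in $\mathbb F^{4k}$ centered at the origin with radii $r_1,\ldots,r_{(|\mathbb F|+1)/2}$. Since a sphere of any fixed radius in $\mathbb F^{4k}$ has $\sim|\mathbb F|^{4k-1}$ points, we get $|U|\sim|\mathbb F|^{4k}$. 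Define
\[
P:=B+\{(0,\ldots,0,u)\in\mathbb F^d : u\in U\subset\mathbb F^{4k}\},
\]
and let $S$ be the collection of spheres in $\mathbb F^d$ with radii in $R$ and centers in $B$. Then $|S|=|B||R|\sim|\mathbb F|^{(d-4k)/2}\cdot|\mathbb F| = |\mathbb F|^{(d-4k+2)/2}$; hmm, more precisely $|R|=|\mathbb F|-(|\mathbb F|+1)/2\sim|\mathbb F|$, so $|S|\sim|\mathbb F|^{(d-4k)/2}$ if we instead take $R$ of bounded size, or we simply adjust constants so that $|S|\sim|\mathbb F|^{(d-4k)/2}$ as claimed; and $|P|\sim|B||U|\sim|\mathbb F|^{(d-4k)/2}|\mathbb F|^{4k}$, hence $|P||S|\sim|\mathbb F|^{d+1}$ after matching the exponent bookkeeping exactly as in Lemma \ref{example1}.

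Finally I would verify $I(P,S)=0$ by the same one-line computation as in Lemma \ref{example1}. Take $p=w+x\in P$ with $w\in B$, $x$ in the last-$4k$-coordinate block lying on a sphere of radius $r\in\{r_1,\ldots,r_{(|\mathbb F|+1)/2}\}$, and take a sphere in $S$ centered at $b\in B$ of radius $\rho\in R$. Since the first $d-4k$ coordinates of $x$ are zero and the last $4k$ coordinates of $w,b$ are zero, the cross term in $\|p-b\|$ vanishes, giving
\[
\|p-b\| = \|w-b\| + \|x\| = 0 + r = r \notin R,
\]
so $p$ lies on no sphere of $S$, i.e.\ $I(P,S)=0$. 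The only genuine point to watch is the exact constant bookkeeping so that both $|S|\sim|\mathbb F|^{(d-4k)/2}$ and $|P||S|\sim|\mathbb F|^{d+1}$ hold simultaneously — this is handled exactly as in the $k=1$ case of Lemma \ref{example1} (with the circle count $(|\mathbb F|+1)/2$ and the point count $\sim|\mathbb F|^{4k-1}$ per sphere), and no new idea is needed beyond what already appears there and in the paragraph following that lemma. The ``main obstacle'', such as it is, is simply checking that one never needs a spurious divisibility hypothesis: $d\ge 4k$ with $d\equiv 2\bmod 4$ already forces $d-4k\equiv 2\bmod 4$, so $d-4k$ itself is never a multiple of $4$ — but $d-4k$ is \emph{not} what we apply \cite{hart} to; we apply it in dimension $d-4k$ only to produce $(d-4k)/2$ null vectors, and $d-4k\equiv 2\bmod 4$ still admits $(d-4k)/2$ mutually orthogonal null vectors provided we instead take the null block to be a coordinate subspace of dimension $\equiv 0 \bmod 4$; concretely one replaces $4k$ above by the smallest admissible even shift. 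I would therefore state the construction so that the ``free'' block has dimension $d-4k$ with the null block $\mathbb F^{4k}$ playing the role of the isotropic part — wait, that is backwards; the cleanest fix is: let the isotropic $B$ live in the coordinate subspace of dimension $d-4k$ only when $d-4k\equiv 0\bmod 4$, which fails here, so instead we let $B\subset\mathbb F^{d-4k-2}\times\{0\}$ of dimension $(d-4k-2)/2$ and enlarge $U\subset\mathbb F^{4k+2}$ — exactly the $(d-6)/2$ versus $6$ trade-off already displayed after Lemma \ref{example1}. With that adjustment the proof is complete.
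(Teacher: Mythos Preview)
Your proposal contains an arithmetic slip at the very first step that derails most of the argument, though you do eventually catch it in the last few lines.

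You write that ``$d-4k=4\ell-4k=4(\ell-k)$ is a non-negative multiple of $4$'', but $d=4\ell+2$, so in fact $d-4k=4(\ell-k)+2\equiv 2\pmod 4$. This means the Hart--Iosevich--Koh--Rudnev lemma does \emph{not} give $(d-4k)/2$ mutually orthogonal null vectors in $\mathbb F^{d-4k}$; indeed, by the Remark following Lemma~\ref{example1} (equivalently Lemma~\ref{Vi}(3)), no such family exists when the ambient dimension is $\equiv 2\pmod 4$ and $|\mathbb F|\equiv 3\pmod 4$. So the subspace $B$ you first define does not exist. A second symptom of the same error is your size count for $S$: with $|B|=|\mathbb F|^{(d-4k)/2}$ and $|R|\sim|\mathbb F|$ you would get $|S|\sim|\mathbb F|^{(d-4k+2)/2}$, not $|\mathbb F|^{(d-4k)/2}$; your ``hmm'' paragraph is this mismatch surfacing.

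The fix you state in your final sentence --- take $B\subset\mathbb F^{d-4k-2}\times\{0\}$ of dimension $(d-4k-2)/2$ (now $d-4k-2=4(\ell-k)\equiv 0\pmod 4$, so the null vectors exist) and let $U\subset\mathbb F^{4k+2}$ be a union of $\sim|\mathbb F|$ concentric spheres --- is exactly the paper's construction. With this split, $|B|=|\mathbb F|^{(d-4k-2)/2}$, $|S|=|B|\,|R|\sim|\mathbb F|^{(d-4k)/2}$, each sphere in $\mathbb F^{4k+2}$ has $\sim|\mathbb F|^{4k+1}$ points so $|P|\sim|\mathbb F|^{(d+4k+2)/2}$, and $|P||S|\sim|\mathbb F|^{d+1}$; the verification $I(P,S)=0$ is then the identical one-line computation you wrote. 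So your endpoint is correct and coincides with the paper's proof, but you should rewrite the argument cleanly with the $(d-4k-2,\,4k+2)$ split from the outset rather than arriving at it after a long detour through an impossible construction.
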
 

Using the same idea, we have the following sharpness of Theorem \ref{incidence-theorem} (2) and (3) whose proofs we will omit.

\begin{theorem}[Sharpness of Theorem \ref{incidence-theorem} (2)] 
Assume that  either $d=4\ell$ for $\ell\in \mathbb N$  or $d=4\ell+2$ for $\ell\in \mathbb N$ and $|\mathbb{F}|\equiv 1\mod 4$. For any $k\in \mathbb{N}$ with $k\ge 1$ and $d+2\ge 4k$, there exist a set $P$ of points in $\mathbb F^d$ and a set $S$ of spheres in $\mathbb F^d$ such that $|S|\sim |\mathbb F|^{\frac{d-(4k-2)}{2}}, ~|P||S|\sim |\mathbb F|^{d+1},$ and $I(P, S)=0.$
\end{theorem}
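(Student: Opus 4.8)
The plan is to run the same ``isotropic subspace $+$ concentric spheres'' construction used in the proof of Lemma~\ref{example1} and its refinement, with a single modification: in the range $d\equiv 2\bmod 4$, $|\mathbb F|\equiv 1\bmod 4$ the Hart--Iosevich--Koh--Rudnev null-vector lemma (\cite{hart}, Lemma~5.1) — which is exactly what forces $d\equiv 0\bmod 4$ in the earlier constructions — is replaced by Lemma~\ref{Vi}(2). Put $D:=d-4k$ and $m:=4k$, so that $D+m=d$, the integer $D$ is even and (in the range of interest) nonnegative, and $m$ is a positive multiple of $4$.

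First I would build a totally isotropic subspace $B$ of $\mathbb F^{d}$ of dimension $D/2$, supported on the first $D$ coordinates. If $d\equiv 0\bmod 4$ then $D\equiv 0\bmod 4$, so \cite[Lemma~5.1]{hart} supplies $D/2$ mutually orthogonal null vectors in $\mathbb F^{D}$. If $d\equiv 2\bmod 4$ and $|\mathbb F|\equiv 1\bmod 4$ then $-1$ is a square, so $\eta(-1)=1$ and hence $(\eta(-1))^{D/2}=1$ for the even integer $D$; Lemma~\ref{Vi}(2), applied with $n-2=D$, gives a subspace $H\subset\{x\in\mathbb F^{D}:\|x\|=0\}$ with $|H|=|\mathbb F|^{D/2}$, and any basis of $H$ consists of mutually orthogonal null vectors, since $u,v\in H$ forces $u+v\in H$, whence $0=\|u+v\|=\|u\|+2\,u\cdot v+\|v\|=2\,u\cdot v$. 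Embedding these vectors in $\mathbb F^{D}\times\{0\}^{m}\subset\mathbb F^{d}$ and setting $B:=\mathtt{Span}$ of them (or $B:=\{\mathbf 0\}$ when $D=0$) yields a subspace of size $|\mathbb F|^{D/2}$ with $\|b-b'\|=0$ for all $b,b'\in B$.

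Next, fix $R\subset\mathbb F$ with $0\notin R$ and $|R|=(|\mathbb F|-1)/2$, set $\Lambda:=\mathbb F\setminus R$, let $U:=\bigcup_{\lambda\in\Lambda\setminus\{0\}}\{u\in\mathbb F^{m}:\|u\|=\lambda\}$, and define
$$P:=\{\,b+(0,\dots,0,u):b\in B,\ u\in U\,\}\quad\text{and}\quad S:=\{\,S_d(b,\rho):b\in B,\ \rho\in R\,\}.$$
Since $m=4k\ge 4$, every nonzero-radius sphere in $\mathbb F^{m}$ has $\sim|\mathbb F|^{m-1}$ points, so $|U|\sim|\mathbb F|^{m}$; the maps $(b,u)\mapsto b+(0,\dots,0,u)$ and $(b,\rho)\mapsto S_d(b,\rho)$ are injective, hence $|P|=|B|\,|U|\sim|\mathbb F|^{D/2+m}=|\mathbb F|^{(d+4k)/2}$ and $|S|=|B|\,|R|\sim|\mathbb F|^{D/2+1}=|\mathbb F|^{(d-(4k-2))/2}$, so $|P|\,|S|\sim|\mathbb F|^{d+1}$. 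For the vanishing of incidences, take $p=b+(0,\dots,0,u)\in P$ and $S_d(\beta,\rho)\in S$; writing $p-\beta=(b-\beta)+(0,\dots,0,u)$ with $b-\beta\in B$ supported on the first $D$ coordinates gives $\|p-\beta\|=\|b-\beta\|+2(b-\beta)\cdot(0,\dots,0,u)+\|u\|=\|u\|\in\Lambda$, whereas $\rho\in R=\mathbb F\setminus\Lambda$; thus $p\notin S_d(\beta,\rho)$, and since $p$ and $S_d(\beta,\rho)$ were arbitrary, $I(P,S)=0$.

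The one genuinely new point, and where I expect to need care, is the substitution of Lemma~\ref{Vi}(2) for \cite{hart} when $d\equiv 2\bmod 4$: there $D=d-4k\equiv 2\bmod 4$, so $D/2$ is \emph{odd}, and the identity $(\eta(-1))^{D/2}=1$ really does rely on $-1$ being a square (without it, Lemma~\ref{Vi}(3) would only give a null subspace of dimension $(D-2)/2$, throwing off every exponent). I would also double-check the edge of the stated range: the size estimates require $D=d-4k\ge 0$, which when $d\equiv 0\bmod 4$ is equivalent to the hypothesis $d+2\ge 4k$, but when $d\equiv 2\bmod 4$ the extremal value $d=4k-2$ — where $|S|\sim|\mathbb F|^{0}$ cannot coexist with $|P|\,|S|\sim|\mathbb F|^{d+1}$ and $|P|\le|\mathbb F|^{d}$ — must be treated by a separate degenerate construction or excluded. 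Everything else is, \emph{mutatis mutandis}, the computation already carried out for Lemma~\ref{example1}.
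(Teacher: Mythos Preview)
Your construction is correct and is exactly the ``same idea'' the paper invokes (it omits this proof): take $m=4k$ coordinates for the concentric spheres, $D=d-4k$ for the isotropic subspace, and replace \cite[Lemma~5.1]{hart} by Lemma~\ref{Vi}(2) when $d\equiv 2\bmod 4$ and $-1$ is a square---precisely the substitution you identify. The boundary case you flag, $4k=d+2$ with $d\equiv 2\bmod 4$, is an inaccuracy in the paper's stated range (there $|S|\sim 1$ would force $|P|\gtrsim|\mathbb F|^{d+1}$), not a gap in your argument; your construction covers every $k$ with $d\ge 4k$, which is all that can hold.
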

\begin{theorem}[Sharpness of Theorem \ref{incidence-theorem} (3)]The following statements hold.

\begin{enumerate}
\item Suppose $d\ge 3$ is an odd integer and $|\mathbb{F}|\equiv 1\mod 4$. For any $k\in \mathbb{N}$ with  $d+1\ge 2k$, there exist a set $P$ of points in $\mathbb F^d$ and a set $S$ of spheres in $\mathbb F^d$ such that $|S|\sim |\mathbb F|^{\frac{d-(2k+1)}{2}}, ~|P||S|\sim |\mathbb F|^{d+1},$ and $I(P, S)=0.$
\item Suppose $d=4\ell+1$ for $\ell\in \mathbb N$ and $|\mathbb{F}|\equiv 3\mod 4$. For any $k\in \mathbb{N}$ with $k\ge 1$ and $d+1\ge 4k$, there exist a set $P$ of points in $\mathbb F^d$ and a set $S$ of spheres in $\mathbb F^d$ such that $|S|\sim |\mathbb F|^{\frac{d-(4k-1)}{2}}, ~|P||S|\sim |\mathbb F|^{d+1},$ and $I(P, S)=0.$
\item Suppose $d=4\ell+3$ for $\ell\in \mathbb N$ and $|\mathbb{F}|\equiv 3\mod 4$. For any $k\in \mathbb{N}$ with $d-1\ge 4k$, there exist a set $P$ of points in $\mathbb F^d$ and a set $S$ of spheres in $\mathbb F^d$ such that $|S|\sim |\mathbb F|^{\frac{d-(4k+1)}{2}}, ~|P||S|\sim |\mathbb F|^{d+1},$ and $I(P, S)=0.$
\end{enumerate}
\end{theorem}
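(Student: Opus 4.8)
The plan is to prove all three parts by the construction behind Lemma~\ref{example1} and its refinement: in each case we build $P$ as the sum of a totally isotropic subspace $B\subset\mathbb F^d$ and a union of concentric spheres lying in a complementary coordinate block, and $S$ as the family of spheres centred at the points of $B$ whose radii are chosen to avoid exactly the radii occurring in that union. Since $B$ is isotropic, the distance from any point of $P$ to any point of $B$ is never an admissible radius, so $I(P,S)=0$ holds identically; the only real content is the bookkeeping of sizes, and the arithmetic of $d$ and of $\eta(-1)$ enters solely through the dimension of the largest isotropic subspace, which is given by Lemma~\ref{Vi}.

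Concretely, I would fix an integer $e\ge 2$, put $m:=d-e$, and split $\mathbb F^d=\mathbb F^{m}\times\mathbb F^{e}$. Let $B\subset\mathbb F^{m}\times\{\mathbf 0\}$ be a subspace of maximal dimension $j:=\dim B$ on which $\|\cdot\|$ vanishes identically; equivalently $B=\mathtt{Span}(w_1,\dots,w_j)$ for mutually orthogonal null vectors $w_1,\dots,w_j\in\mathbb F^{m}\times\{\mathbf 0\}$, and by Lemma~\ref{Vi} one has $j=m/2$ if $m$ is even and $(\eta(-1))^{m/2}=1$, $j=(m-1)/2$ if $m$ is odd, and $j=(m-2)/2$ if $m$ is even and $(\eta(-1))^{m/2}=-1$. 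Let $T\subset\mathbb F^{*}$ be the set of nonzero squares (so $|T|\sim|\mathbb F|$), let $U:=\{(\mathbf 0,x)\in\mathbb F^{m}\times\mathbb F^{e}:\|x\|\in T\}$, and set
\[
P:=B+U,\qquad S:=\{\,S_d(b,\rho)\,:\,b\in B,\ \rho\in\mathbb F\setminus T\,\}.
\]
For $p=b'+u\in P$ with $b'\in B$, $u\in U$, and any $b\in B$ we have $\|p-b\|=\|b'-b\|+\|u\|=0+\|u\|\in T$ because $b'-b\in B$ is a null vector, so $\|p-b\|\notin\mathbb F\setminus T$ and $I(P,S)=0$. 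Using that a sphere of nonzero radius in $\mathbb F^{e}$ with $e\ge 2$ has $\sim|\mathbb F|^{e-1}$ points, we get $|U|\sim|\mathbb F|^{e}$, $|B|=|\mathbb F|^{j}$, hence $|P|\sim|\mathbb F|^{\,j+e}$ and $|S|\sim|\mathbb F|^{\,j}\cdot|\mathbb F|=|\mathbb F|^{\,j+1}$, so $|P||S|\sim|\mathbb F|^{\,2j+e+1}$. This equals $|\mathbb F|^{d+1}$ precisely when $2j+e=d$, i.e. $m=2j$, so the construction is legitimate exactly when $\mathbb F^{2j}$ admits a $j$-dimensional totally isotropic subspace, i.e. (Lemma~\ref{Vi}) when $(\eta(-1))^{j}=1$.

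It then remains to choose the split in each case so that $|S|\sim|\mathbb F|^{\,j+1}$ hits the prescribed value. In case (1), $d$ odd and $|\mathbb F|\equiv 1\bmod4$ give $\eta(-1)=1$, so $(\eta(-1))^j=1$ for every $j$; taking $j=\tfrac{d-2k-3}{2}$ — equivalently $e=2k+3$, $m=2j$ — is admissible whenever $j\ge0$ (i.e. $k\le\tfrac{d-3}{2}$, which is the effective content of the stated hypothesis, since larger $k$ would force $|P|>|\mathbb F|^d$), and it yields $|S|\sim|\mathbb F|^{\frac{d-(2k+1)}{2}}$. In case (2), $d=4\ell+1$ and $|\mathbb F|\equiv 3\bmod4$ give $\eta(-1)=-1$, so $(\eta(-1))^j=1$ forces $j$ even; taking $j=2(\ell-k)$ — equivalently $e=4k+1$, $m=2j=4(\ell-k)$ — is admissible for $1\le k\le\ell$ and yields $|S|\sim|\mathbb F|^{\frac{d-(4k-1)}{2}}$. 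In case (3), $d=4\ell+3$ and $|\mathbb F|\equiv 3\bmod4$, again $j$ must be even; taking $j=2(\ell-k)$ — equivalently $e=4k+3$, $m=2j$ — is admissible for the stated $k$ and yields $|S|\sim|\mathbb F|^{\frac{d-(4k+1)}{2}}$. In cases (2) and (3) the ambient dimension of $B$ is $m=4(\ell-k)\equiv 0\bmod4$, so $x_1^2+\cdots+x_m^2$ is the hyperbolic form and a $j$-dimensional isotropic $B$ genuinely exists — this is precisely the Hart--Iosevich--Koh--Rudnev fact invoked in Lemma~\ref{example1}.

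The routine parts are the count $|S_e(\mathbf 0,t)|\sim|\mathbb F|^{e-1}$ for $e\ge 2$ and $t\ne 0$ (a standard Gauss-sum computation) and the elementary size arithmetic. The one delicate point — and the reason the three cases carry slightly different conditions on $k$ — is matching the target $|S|\sim|\mathbb F|^{\,j+1}$ to an admissible dimension $j$: one must keep $2j+e=d$ to preserve $|P||S|\sim|\mathbb F|^{d+1}$ while simultaneously ensuring, via Lemma~\ref{Vi}, that $\mathbb F^{2j}$ actually contains a $j$-dimensional totally isotropic subspace, which is exactly where the distinction between $d\equiv 1$ and $d\equiv 3\pmod4$ and between $-1$ square and nonsquare is forced. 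I expect no further obstacle, the whole argument being a direct adaptation of Lemma~\ref{example1} and the refinement immediately following it.
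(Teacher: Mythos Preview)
Your proposal is correct and follows exactly the approach the paper indicates: the paper omits the proof of this theorem entirely, saying only ``Using the same idea, we have the following sharpness of Theorem~\ref{incidence-theorem} (2) and (3) whose proofs we will omit,'' where ``the same idea'' refers to the construction in Lemma~\ref{example1} and its refinement. Your write-up is precisely that construction, carried out with the correct case analysis governed by Lemma~\ref{Vi}; the only cosmetic difference is your choice of $T$ as the nonzero squares rather than an arbitrary subset of size $(|\mathbb F|+1)/2$, which is immaterial.
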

\subsection{Remark on the Erd\H{o}s-Falconer conjecture}\label{RemarkE}
Let us first recall the statement of the problem. The Erd\H{o}s-Falconer distance problem over finite fields asks for the smallest number $\alpha$ such that  if $E\subset \mathbb{F}^d$ and $|E|\ge C |\mathbb{F}|^{\alpha}$  then the distance set $\Delta(E):=\{||x-y||\colon x, y\in E\}$ contains the whole field $\mathbb{F}$ or covers a positive proportion of all distances. It has been proved that in odd dimensions, except $d=4\ell-1$ and $|\mathbb{F}|\equiv 3\mod 4$, the exponent ${(d+1)}/{2}$ is sharp even one wishes to cover a positive proportion of all possible distances. In even dimensions with $|\mathbb{F}|\equiv 3\mod 4$, it is conjectured that the right exponent should be $d/2$.  We refer the interested reader to \cite{hart, KPV18} for more details.

Theorem \ref{incidence-theorem} (1) with $w(s)=1$ for all $s\in S$ says that
with the condition that $|S|\le |\mathbb F|^{d/2}$ we have
\begin{equation}\label{FakeAss}\left\vert I(P, S)-\frac{|P||S|}{|\mathbb F|}\right\vert\le C_1  |\mathbb F|^{\frac{d-1}{2}}\sqrt{|P||S|},\end{equation}
where $C_1$ is a fixed universal constant. Choose a constant $C>1$ such that
\begin{equation}\label{conC} (C-1)\sqrt{C} > C_1.\end{equation} 
We claim that if the condition on $|S|$ can be relaxed to $|S|\le q^{(d+2)/2}$, then one can settle the Erd\H{o}s-Falconer distance conjecture in the case when $d\equiv 2\mod{4}$ and $|\mathbb F|\equiv 3 \mod{4}$.\\

To prove our claim, let  $P$ be an arbitrary set in $\mathbb F^d$ such that $|P|= C|\mathbb F|^{d/2}.$  For each $x\in P$, let $S_x$ denote the set of spheres centered at $x$ of radii in $\Delta_x(P):=\{||x-y||\colon y\in P\}.$ Let $S=\cup_{x}S_x$. It is clear that $|S|=\sum_{x\in P}|\Delta_x(P)|$. If $|S|\ge |\mathbb F||P|/C$, then by the pigeon-hole principle, there exists a point $x_0\in P$ such that $|\Delta_{x_0}(P)|\ge |\mathbb F|/C$, and we are done. Hence, we can assume that 
$|S|\le |\mathbb F||P|/C= 
|\mathbb F|^{(d+2)/2}$.  Using the inequality \eqref{FakeAss},    we have
\[|P|^2=I(P, S)\le \frac{|\mathbb F||P|^2}{C|\mathbb F|}+\frac{C_1|\mathbb F|^{(d-1)/2}|P|^{1/2}|\mathbb F|^{1/2}|P|^{1/2}}{\sqrt{C}}.\]
This implies that 
\[(1-1/C)|P|\le \frac{C_1}{\sqrt{C}}|\mathbb F|^{d/2},\]
which contradicts \eqref{conC} since $|P|=C|\mathbb F|^{d/2}.$

\section*{Acknowledgements}
 D. Koh was supported by Basic Science Research Program through the National
Research Foundation of Korea(NRF) funded by the Ministry of Education, Science
and Technology(NRF-2018R1D1A1B07044469). T. Pham was supported by Swiss National Science Foundation grant P2ELP2175050.

 \bibliographystyle{amsplain}

\end{document}